\tikzstyle arrowstyle=[scale=1]
\tikzstyle directed=[postaction={decorate,decoration={markings,mark=at position .65 with {\arrow[arrowstyle]{stealth}}}}]
\tikzstyle reverse directed=[postaction={decorate,decoration={markings,mark=at position .65 with {\arrowreversed[arrowstyle]{stealth};}}}]
\newtheorem{Def}{Definition}
\newtheorem{Th}{Theorem}[section]
\newtheorem{Lm}{Lemma}[section]
\newtheorem{remark}{Remark}[section]
\newcommand{\bc}{\begin{center}}
\newcommand{\ec}{\end{center}}
\newcommand{\be}{\begin{eqnarray}}
\newcommand{\ee}{\end{eqnarray}}
\newcommand{\ben}{\begin{eqnarray*}}
\newcommand{\een}{\end{eqnarray*}}
\newcommand{\Om}{{\rm\Omega}}
\newcommand{\dx}{\,dx}
\newcommand{\ds}{\,ds}
\newcommand{\cE}{\mathcal{E}}
\newcommand{\cT}{\mathcal{T}}
\newcommand{\R}{\mathbb{R}}
\newcommand{\cP}{\ensuremath{\mathcal{P}} }
\renewcommand{\arraystretch}{1.5}
\newcommand{\yemeifont}{\fontsize{9pt}{\baselineskip}\selectfont}
\begin{document}
\title{
High accuracy methods for eigenvalues of elliptic operators by nonconforming elements
}

\author {Jun Hu}
\address{LMAM and School of Mathematical Sciences, Peking University,
  Beijing 100871, P. R. China.  hujun@math.pku.edu.cn}

\author{Limin Ma}
\address{LMAM and School of Mathematical Sciences, Peking University,
  Beijing 100871, P. R. China. maliminpku@gmail.com}
\thanks{The authors were supported by  NSFC
projects 11625101, 91430213 and 11421101}

\maketitle

\begin{abstract}
In this paper, three high-accuracy methods for eigenvalues of second order elliptic operators are proposed by using the nonconforming Crouzeix-Raviart(CR for short hereinafter) element and the nonconforming enriched Crouzeix-Raviart(ECR for short hereinafter) element. They are based on a crucial full one order superconvergence of the first order mixed Raviart-Thomas(RT for short hereinafter) element. The main ingredient of such a superconvergence analysis is to employ a discrete Helmholtz decomposition of the difference between the canonical interpolation and the finite element solution of the RT element. In particular, it allows for some vital cancellation between terms in one key sum of boundary terms. Consequently, a full one order superconvergence follows from a special relation between the CR element and the RT element, and the equivalence between the ECR element and the RT element for these two nonconforming elements. These superconvergence results improve those in literature from a half order to a full one order for the RT element, the CR element and the ECR element. Based on the aforementioned superconvergence of the RT element,  asymptotic expansions of eigenvalues are established and employed to achieve  high accuracy extrapolation methods for these two nonconforming elements. In contrast to a classic analysis in literature, the novelty herein is to use not only the canonical interpolations of these nonconforming elements but also that of the RT element to analyze such asymptotic expansions. Based on the superconvergence of these nonconforming elements,  asymptotically exact a posteriori error estimators of eigenvalues are constructed and analyzed for them. Finally, two post-processing methods are proposed to improve accuracy of approximate eigenvalues by employing these a posteriori error estimators. Numerical tests are provided to justify and compare the performance of the aforementioned methods.

  \vskip 15pt

\noindent{\bf Keywords. }{eigenvalue problem, Crouzeix-Raviart element, superconvergence, asymptotic expansion, a posteriori error estimate}

 \vskip 15pt

\noindent{\bf AMS subject classifications.}
    { 65N30, 73C02.}

\end{abstract}

\section{Introduction}
Eigenvalue problems are important. They appear in many fields, such as quantum mechanics, fluid mechanics, stochastic process, etc. A fundamental work is to find eigenvalues of partial differential equations. The topic about how to approximate eigenvalues with high accuracy attracts more and more interest.

The superconvergence analysis plays an important role in approximating eigenvalues with high accuracy. As is known, there are many results in literature for low order conforming finite elements and mixed elements of second order elliptic problems, see \cite{Chen1995High,Chen2013Superconvergence,Brandts1994Superconvergence,Jan2000Superconvergence,Douglas1989Superconvergence}. However, for nonconforming elements, the reduced continuity of trial and test functions makes the corresponding superconvergence analysis very difficult. So far, most of superconvergence results for nonconforming elements are focused on methods on rectangular or nearly parallelogram triangulations, see \cite{ZHONG2005CONSTRAINED,LIN2005ON,MING2006SUPERCONVERGENCE}. There are a few superconvergence results for nonconforming elements on triangular meshes \cite{Hu2016Superconvergence,li2017global,mao2009high}. In \cite{Hu2016Superconvergence}, a  half order superconvergence was analyzed for the CR element. The main idea therein is to employ a special relation between the CR element and the RT element to explore some conformity of discrete stresses by this nonconforming element. However, a full one order supconvergence was observed in the numerical tests \cite{Hu2016Superconvergence}. The cause of such a gap is from a half order superconvergence for the RT element \cite{Brandts1994Superconvergence}, which is a half lower than the optimal superconvergence indicated by numerical tests. In fact, the analysis in \cite{Brandts1994Superconvergence} for one key sum of boundary terms was dependent on a result of Sobolev spaces, which can not be improved as showed by a counter example in \cite{J1972Non}. Thus, a direct application of this result can only yield a half order superconvergence for the RT element. In \cite{li2017global}, a full one order superconvergence was proved by following the analysis \cite{bank2003asymptotically} for the RT element. The result therein requires the regularity of the primary solution in $H^{4+\epsilon}(\Om,\mathbb{R})$ for any $\epsilon >0$.

In this paper, a new analysis for the aforementioned boundary terms is presented, which leads to a full one order superconvergence for the RT element. The main ingredient of such a superconvergence analysis is to employ a discrete Helmholtz decomposition of the difference between the canonical interpolation and the finite element solution of the RT element. In particular, it allows for some vital cancellation between the boundary terms sharing a common vertex in one key sum. Thus, following the analysis in \cite{Hu2016Superconvergence}, the superconvergence results for the CR element and the ECR element of the Poisson problem are improved from a half order to a full one order. These results are also extended to corresponding eigenvalue problems.

Extrapolation methods are widely used to improve the accuracy of eigenvalues. The mathematical analysis is based on asymptotic expansions of approximate eigenvalues. For the conforming linear element of second order elliptic eigenvalue problems, the corresponding asymptotic expansions were analyzed in \cite{Lin1984Asymptotic}. The extensions for second order elliptic operators to variable coefficient elliptic eigenvalue problems, eigenvalue problems on 3-dimensional domains, eigenvalue problems on domains with reentrant corners, nonconforming elements and mixed finite elements, can be found in \cite{Ding1990quadrature,Lin2011Extrapolation,Blum1990Finite,Lin2008New,Lin2009Asymptotic,lin1999high}, respectively; the extensions for forth order elliptic eigenvalue problems to the Ciarlet-Raviart mixed scheme and nonconforming elements on rectangular meshes were discussed in \cite{Chen2007Asymptotic,Jia2010Approximation,Luo2002High,Lin2007Finite,Lin2009New}, respectively. For the CR element, it was pointed out by Lin in \cite{Lin2005CAN} that the accuracy of discrete eigenvalues can be improved from second order to forth order by extrapolation methods, if corresponding eigenfunctions are smooth enough. But the crucial asymptotic expansions were not established there. The main difficulty is that the canonical interpolation of the CR element does not admit the usual superclose property with respect to the finite element solution in the energy norm.


In this paper, asymptotic expansions of eigenvalues are established and employed to achieve high accuracy extrapolation methods for both the CR element and the ECR element. In contrast to a classic analysis in literature, the novelty herein is to use not only the canonical interpolations of these two nonconforming elements but also that of the RT element to analyze such asymptotic expansions. Then, thanks to the superconvergence property of the RT element, the desired asymptotic expansions follow from a special relation between the CR element and the RT element \cite{Marini1985An}, and the equivalence between the ECR element and the RT element \cite{Hu2015The}, respectively.

Besides the aforementioned extrapolation methods, gradient recovery techniques can also be used to improve the accuracy of eigenvalue approximations. In \cite{Zhang2006Enhancing}, for eigenvalues of the Laplacian operator by the conforming linear element, remarkable fourth order convergence rates of approximate eigenvalues were observed. The enhancements to eigenvalues by gradient recovery techniques are based on a simple identity
$$
\lambda_h-\lambda=\parallel \nabla_h (u-u_h)\parallel_{0,\Om}^2-\lambda \parallel u-u_h\parallel_{0,\Om}^2,
$$
where $(\lambda_h, u_h)$ is the corresponding approximation to an eigenpair $(\lambda, u)$. The first term on the right side of the above identity can be approximated with high accuracy by gradient recovery techniques, such as polynomial preserving recovery techniques(PPR for short hereinafter) in \cite{Zhang2005A}, Zienkiewicz-Zhu superconvergence patch recovery techniques(SPR for short hereinafter) in \cite{Zienkiewicz1992The} and the superconvergent cluster recovery method in \cite{huang2010superconvergent}. Since the second term is of higher order, new approximate eigenvalues with higher accuracy can be obtained by the gradient recovery techniques.

As for nonconforming elements of second order elliptic eigenvalue problems, the identity becomes
\begin{equation*}
\begin{split}
\lambda -\lambda_h =\big(\nabla_h (u -u_h ),\nabla_h (u -u_h )\big)+2 (\nabla u ,\nabla_h u_h )-2\lambda_h (u ,u_h )-\lambda_h (u -u_h ,u -u_h ).
\end{split}
\end{equation*}
Note that the consistency error $(\nabla u ,\nabla_h u_h )-2\lambda_h (u ,u_h )$ relates to eigenfunctions themselves.

In this paper, asymptotically exact a posteriori  error estimators of eigenvalues are constructed and analyzed for both the CR element and the ECR element. As a generalization of the result in \cite{Zhang2006Enhancing}, enhancements to eigenvalues are resulted from the corresponding asymptotically exact a posteriori error estimators. In order to approximate the extra term $(\nabla u,\nabla_h u_h)-2\lambda_h(u,u_h)$ with high accuracy, the canonical interpolations $w_h$ of eigenfunctions are introduced. Thanks to the commuting property of the canonical interpolations of these two nonconforming elements,
$$
(\nabla u,\nabla_h u_h)-\lambda_h(u,u_h)=-\lambda_h(u-w_h,u_h).
$$
%
Thus, by expressing the interpolation error in terms of the derivatives of eigenfunctions, the remaining term $\lambda_h(u-w_h,u_h)$ can be approximated with high accuracy by the gradient recovery techniques. In this way, asymptotically exact a posteriori error estimators of eigenvalues are accomplished. Furthermore, a summation of approximate eigenvalues and the corresponding asymptotically exact a posteriori error estimators produces new approximate eigenvalues with higher accuracy.

An additional technique to improve the accuracy of eigenvalue approximations is to combine two approximate eigenvalues by a weighted-average \cite{Hu2012A}. Although an eigenvalue is indeed a weighted-average of two approximations, the corresponding weights are usually unknown because they relate to the errors of the two discrete eigenvalues. The main idea in \cite{Hu2012A} is to design approximate weights through four approximate eigenvalues, which needs two methods to compute two upper bounds and two lower bounds of eigenvalues on two meshes, respectively.


Thanks to the aforementioned asymptotically exact a posteriori error estimators, a new combining technique is proposed to obtain approximate eigenvalues with high accuracy. Given  lower bounds of eigenvalues and the corresponding nonconforming approximate eigenfunctions, conforming approximations of eigenfunctions are obtained by applying the average-projection method \cite{Hu2015Constructing} to these nonconforming eigenfunctions. Asymptotical upper bounds of eigenvalues can be obtained by taking the  Rayleigh quotients of such  conforming functions, see \cite{Hu2015Constructing} for more details. Finally, for the lower and the upper bounds of eigenvalues, the weights are designed by using the corresponding asymptotically exact a posteriori error estimators. Furthermore, the superconvergence of the resulted combining eigenvalues is proved. It needs to point out that our algorithm only needs to solve a discrete eigenvalue problem on one mesh, while the one in \cite{Hu2012A} needs to solve four discrete eigenvalue problems on two meshes.

The remaining paper is organized as follows. Section 2 presents second order elliptic eigenvalue problems and some notations. Section 3 proves a full one order superconvergence for the RT element of source problems, and furthermore, the superconvergence for the CR element and the ECR element of source problems and eigenvalue problems. Section 4 explores  asymptotic expansions of approximate eigenvalues for both the CR element and the ECR element, and proves the efficiency of  extrapolation methods. Section 5 establishes and analyzes  asymptotically exact a posteriori error estimators of eigenvalues by the CR element and the ECR element, respectively. Section 6 proposes two post-processing methods to approximate eigenvalues with high accuracy. Section 7 presents some numerical tests.

\section{Notations and Preliminaries}
\subsection{Notations}
We first introduce some basic notations. Given a nonnegative integer $k$ and a bounded domain $\Om\subset \mathbb{R}^2$ with boundary $\partial \Om$, let $W^{1,\infty}(\Om,\mathbb{R})$, $H^k(\Om,\mathbb{R})$, $\parallel \cdot \parallel_{k,\Om}$ and $|\cdot |_{k,\Om}$ denote the usual Sobolev spaces, norm, and semi-norm, respectively. And $H_0^1(\Om,\mathbb{R}) = \{u\in H^1(\Om,\mathbb{R}): u|_{\partial \Om}=0\}$. Denote the standard $L^2(\Om,\mathbb{R})$ inner product and $L^2(K,\mathbb{R})$ inner product by $(\cdot, \cdot)$ and $(\cdot, \cdot)_{0,K}$, respectively.

Suppose that $\Om\subset \mathbb{R}^2$ is a bounded polygonal domain covered exactly by a shape-regular partition $\cT_h$ into simplices. Let $|K|$ denote the volume of element $K$ and $|e|$ the length of edge $e$. Let $h_K$ denote the diameter of element $K\in \cT_h$ and $h=\max_{K\in\cT_h}h_K$. Denote the set of all interior edges and boundary edges of $\cT_h$ by $\cE_h^i$ and $\cE_h^b$, respectively, and $\cE_h=\cE_h^i\cup \cE_h^b$. For any interior edge $e=K_e^1\cap K_e^2$, we denote the element with larger global label by $K_e^1$, the one with smaller global label by $K_e^2$. Denote the corresponding unit normal vector which points from $ K_e^1 $ to $K_e^2$ by $\bold{n}_e$. Let $[\cdot]$ be jumps of piecewise functions over edge $e$, namely
$$
[v]|_e := v|_{K_e^1}-v|_{K_e^2}$$
for any piecewise function $v$. For $K\subset\R^2,\ r\in \mathbb{Z}^+$, let $P_r(K)$ be the space of all polynomials of degree not greater than $r$ on $K$. For $r\geq 1$, denote
$$
\nabla P_r(K):=\{\nabla v: v\in P_r(K)\}.
$$
Denote the piecewise gradient operator and the piecewise hessian operator by $\nabla_h$ and $\nabla_h^2$, respectively.

Let $K$ have vertices $\bold{p}_i=(p_{i1},p_{i2}),1\leq i\leq 3$ oriented counterclockwise. Denote $\{e_i\}_{i=1}^3$ the edges of element $K$, $\{l_i\}_{i=1}^3$ the edge lengths, $\{d_i\}_{i=1}^3$ the perpendicular heights, and $\{\bold{n}_i\}_{i=1}^3$ the unit outward normal vectors(see Figure \ref{fig:geometric}). Denote the second order derivatives $\frac{\partial^2 u}{\partial x_i\partial x_j}$ by $\partial_{x_ix_j} u$, $1\leq i, j\leq 2$.

\begin{figure}[!ht]
\begin{center}
\begin{tikzpicture}[xscale=8,yscale=8]
\tikzstyle{every node}=[font=\Large,scale=0.9]
\draw[-](0,0) -- (0.7,0);
\draw[-] (0,0) -- (0.5,0.4);
\draw[-] (0.7,0) -- (0.5,0.4);
\draw[-] (0.5,0) -- (0.5,0.4);
\draw[-] (0.5,0.02) -- (0.52,0.02);
\draw[-] (0.52,0.02) -- (0.52,0);
\draw[->][line width=1pt] (0.5,0) -- (0.5,-0.07);
\draw[->][line width=1pt] (0.6,0.2) -- (0.66,0.23);
\draw[->][line width=1pt] (0.25,0.2) -- (0.21,0.25);
\node[below] at (0,0) {$\bold{p}_1$};
\node[below] at (0.7,0) {$\bold{p}_2$};
\node[above] at (0.5,0.4) {$\bold{p}_3$};
\node[below] at (0.3,0) {$e_3$};
\node[right] at (0.5,-0.07) {$\bold{n}_3$};
\node[right] at (0.62,0.18) {$\bold{n}_1$};
\node at (0.26,0.28) {$\bold{n}_2$};
\node[left] at (0.5,0.15) {$d_3$};
\end{tikzpicture}
\end{center}
\caption{Paramters associated with a triangle $K$.}
\label{fig:geometric}
\end{figure}
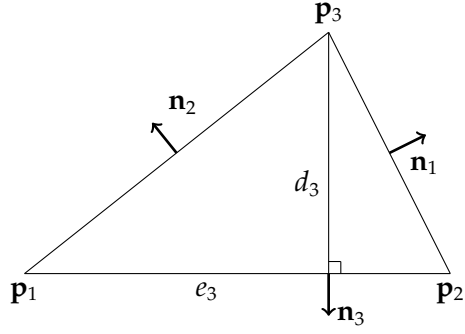

Throughout the paper, a positive constant independent of the mesh size is denoted by $C$, which refers to different values at different places. For ease of presentation, we shall use the symbol $A\lesssim B$ to denote that $A\leq CB$.
\subsection{Second order elliptic eigenvalue problems}
On a domain $\Om\subset \mathbb{R}^2$ with Lipschitz boundary, we consider a model eigenvalue problem of finding : $(\lambda, u)\in \mathbb{R}\times V$ with $\parallel u \parallel_{0,\Om}=1$ such that
\be\label{variance}
a(u, v)=\lambda(u, v) \text{\quad for any }v\in V,
\ee
where $V:= H^1_0(\Om,\mathbb{R})$. The bilinear form
$$
a(w, v):=\int_{\Om} \nabla w\cdot \nabla v \dx
$$
is symmetric, bounded, and coercive in the following sense:
$$
a(w,v)=a(v,w),\quad |a(w,v)|\lesssim \parallel w\parallel_{1,\Om}\parallel v\parallel_{1,\Om},\quad \parallel v\parallel_{1,\Om}^2\lesssim a(v,v)\text{\quad for any } w, v\in V.
$$
The eigenvalue problem \eqref{variance} has a sequence of eigenvalues
$$0<\lambda_1\leq \lambda_2\leq \lambda_3\leq ...\nearrow +\infty,$$
and the corresponding eigenfunctions
$$u_1, u_2, u_3,... ,$$
which satisfy
$$(u_i, u_j)=\delta_{ij}\ \text{ with } \delta_{ij}=\begin{cases}
0 &\quad i\neq j\\
1 &\quad i=j
\end{cases}.$$

Let $V_h$ be a nonconforming finite element approximation to $V$ over $\cT_h$. The corresponding finite element approximation of \eqref{variance} is: find $(\lambda_h, u_h)\in \mathbb{R}\times V_h$, such that $\parallel u_h\parallel_{0,\Om}=1$ and
\be\label{discrete}
a_h(u_h,v_h)=\lambda_h(u_h, v_h)\quad \text{ for any }v_h\in V_h,
\ee
where the discrete bilinear form $a_h(w_h,v_h)$ is defined elementwise as
$$
a_h(w_h,v_h):=\sum_{K\in\cT_h}\int_K \nabla_h w_h\cdot \nabla_h v_h\dx.
$$
Let $N=\text{dim }V_h$. Suppose that $ \parallel \cdot \parallel_h:=a_h(\cdot, \cdot)^{1/2}$ is a norm over the discrete space $V_h$, the discrete problem \eqref{discrete} admits a sequence of discrete eigenvalues
$$0<\lambda_{1,h}\leq \lambda_{2,h}\leq \lambda_{3,h}\leq ...\nearrow \lambda_{N,h},$$
and the corresponding eigenfunctions
$$u_{1,h}, u_{2,h},..., u_{N,h},$$
which satisfy $(u_{i,h}, u_{j,h})=\delta_{ij},\ 1\leq i,j\leq N.$

For discrete problem \eqref{discrete}, we consider the following two nonconforming elements: the CR element and the ECR element.

$\bullet$\quad  The CR element space over $\cT_h$ is defined in \cite{Crouzeix1973Conforming} by
\begin{equation*}
\begin{split}
V^{\rm CR}_h:=&\big \{v\in L^2(\Om,\mathbb{R})\big|v|_K\in P_1(K)\text{ for any }  K\in\cT_h, \int_e [v]\ds =0\text{ for any }  e\in \cE_h^i,\\
&\int_e v\ds=0\text{ for any }  e\in \cE_h^b\big\}.
\end{split}
\end{equation*}
Moreover, we define the canonical interpolation operator $\Pi^{\rm CR}_h: V\rightarrow V^{\rm CR}_h$ as follows:
\be\label{crinterpolation}
\int_e\Pi^{\rm CR}_hv\ds=\int_e v\ds\quad \text{ for any } e\in \cE_h,\ v\in V.
\ee
Denote the approximate eigenpair of \eqref{discrete} in the nonconforming space $V^{\rm CR}_h$ by $(\lambda^{\rm (CR, E)}_h,u^{\rm (CR, E)}_h)$, which satisfies $ \parallel u^{\rm (CR, E)}_h\parallel_{0,\Om}=1$.

$\bullet$\quad  The ECR element space over $\cT_h$ is defined in \cite{Hu2014Lower} by
\begin{equation*}
\begin{split}
V^{\rm ECR}_h:=&\big \{v\in L^2(\Om,\mathbb{R})\big|v|_K\in \text{ECR}(K)\text{ for any }  K\in\cT_h, \int_e [v]\ds =0\text{ for any }  e\in \cE_h^i,\\
&\int_e v\ds=0\text{ for any }  e\in \cE_h^b\big\}.
\end{split}
\end{equation*}
where $\text{ECR}(K)=P_1(K)+\text{span}\big\{x_1^2+x_2^2\big\}$. Define the canonical interpolation operator $\Pi^{\rm ECR}_h:V\rightarrow V_h^{\rm ECR}$  for any $v\in V$, by
\begin{equation}\label{ecrinterpolation}
\begin{split}
\int_e\Pi^{\rm ECR}_hv\ds&= \mathlarger{\int}_e v\ds\quad\text{ for any } e\in\cE_h,\\
\mathlarger{\int}_K \Pi^{\rm ECR}_hv\dx&= \int_K v\dx\quad\text{ for any }  K\in\cT_h.
\end{split}
\end{equation}
Denote the approximate eigenpair of \eqref{discrete} in the nonconforming space $V^{\rm ECR}_h$ by $(\lambda^{\rm (ECR, E)}_h,u^{\rm (ECR, E)}_h)$, which satisfies $ \parallel u^{\rm (ECR, E)}_h\parallel_{0,\Om}=1$. It follows from the theory of nonconforming eigenvalue approximations in \cite{Hu2014Lower} that
\begin{equation}\label{CR:est}
|\lambda-\lambda^{\rm (CR, E)}_h|+\parallel u- u^{\rm (CR, E)}_h\parallel_{0,\Om} + h^s\parallel \nabla_h (u-u^{\rm (CR, E)}_h)\parallel_{0,\Om}\lesssim h^{2s}\parallel u\parallel_{1+s,\Om},
\end{equation}
\begin{equation}\label{ECR:est}
|\lambda-\lambda^{\rm (ECR, E)}_h|+\parallel u - u^{\rm (ECR, E)}_h\parallel_{0,\Om} + h^s\parallel \nabla_h (u-u^{\rm (ECR, E)}_h)\parallel_{0,\Om}\lesssim h^{2s}\parallel u\parallel_{1+s,\Om},
\end{equation}
provided that $u\in H^{1+s}(\Om,\mathbb{R})\cap H^1_0(\Om,\mathbb{R})$, $\ 0<s\leq 1$.

For the CR element and the ECR element, the commuting property of the canonical interpolations reads
\begin{equation}\label{commuting}
\begin{split}
\int_K \nabla(w - \Pi^{\rm CR}_h w)\cdot \nabla v_h \dx &=0\quad\text{ for any } w\in V, v_h\in V^{\rm CR}_h,\\
\int_K \nabla(w - \Pi^{\rm ECR}_h w)\cdot \nabla v_h \dx &=0\quad\text{ for any }w\in V, v_h\in V^{\rm ECR}_h,
\end{split}
\end{equation}
see \cite{Crouzeix1973Conforming,Hu2014Lower} for details. For the CR element, the commuting property of the canonical interpolation operator $\Pi_h^{\rm CR}$ gives
\begin{equation}\label{commutId}
\begin{split}
\lambda-\lambda_h^{\rm (CR, E)}=&a_h(u -u_h^{\rm (CR, E)},u-u_h^{\rm (CR, E)})-2\lambda_h^{\rm (CR, E)}(u-\Pi_h^{\rm CR}u,u_h^{\rm (CR, E)})\\
&-\lambda_h^{\rm (CR, E)}(u-u_h^{\rm (CR, E)},u-u_h^{\rm (CR, E)}).
\end{split}
\end{equation}
A similar identity holds for the approximate eigenpair $(\lambda_h^{\rm (ECR, E)}, u_h^{\rm (ECR, E)})$ by the ECR element. These two identities are crucial for the analysis of extrapolation methods and asymptotically exact a posteriori error estimators.

\section{Superconvergence results}\label{sec:super}
In this section, a full one order superconvergence is proved for the RT element of the Poisson problem. Furthermore, based on the superconvergence of the RT element, the superconvergence results for the CR element and the ECR element of the Poisson problem are improved from a half order to a full one order. These results are also extended to the corresponding eigenvalue problem.

\subsection{Superconvergence of the RT element}
To begin with,  we consider the following elliptic problem: find $(\bold{\sigma}^{\rm (f,\ S)},u^{\rm (f,\ S)})\in H(\text{div},\Om,\mathbb{R}^2)\times L^2(\Om,\mathbb{R})$ such that:
\begin{equation}\label{weak}
\begin{aligned}
(\bold{\sigma}^{\rm (f,\ S)},\bold{\tau})-(u^{\rm (f,\ S)},\text{div}\bold{\tau})&=0&&\text{ for any }\bold{\tau}\in H(\text{div},\Om,\mathbb{R}^2),\\
(v,\text{div}\bold{\sigma}^{\rm (f,\ S)})&=(f,v)&&\text{ for any }v\in L^2(\Om,\mathbb{R}),
\end{aligned}
\end{equation}
where $f\in L^2(\Om,\mathbb{R})$ and
$$
H(\text{div},\Om,\mathbb{R}^2) = \{\tau\in L^2(\Om,\mathbb{R}^2),\ \text{div} \tau\in L^2(\Om,\mathbb{R})\}.
$$
One mixed finite element is the first order RT element \cite{Raviart1977A} whose shape function space is
$$
\text{RT}_K:=(P_0(K))^2+ \bold{x}P_0(K)\quad\text{ for any }K\in \cT_h.
$$
The corresponding global finite element space reads
$$
\text{RT}(\cT_h):=\big \{\tau\in H(\text{div},\Om,\mathbb{R}^2): \tau|_K\in \text{RT}_K\text{ for any }K\in \cT_h\big \}.
$$
To get a stable pair of space, the piecewise constant space is used to approximate the displacement, namely,
$$
U_{\text{RT}}:=\big \{v\in L^2(\Om,\mathbb{R}):v|_K\in P_0(K) \text{ for any }K\in \cT_h\big \}.
$$
The RT element method of \eqref{weak} seeks $(\bold{\sigma}_h^{\rm (RT,\ f)},u_h^{\rm (RT,\ f)})\in \text{RT}(\cT_h)\times U_{\text{RT}}$ such that
\begin{equation}\label{weakdis}
\begin{aligned}
(\bold{\sigma}_h^{\rm (RT,\ f)},\bold{\tau}_h)-(u_h^{\rm (RT,\ f)},\text{div}\bold{\tau}_h)&=0&&\text{ for any }\bold{\tau}_h\in \text{RT}(\cT_h),\\
(v_h,\text{div}\bold{\sigma}_h^{\rm (RT,\ f)})&=(f,v_h)&&\text{ for any }v_h\in U_{\text{RT}}.
\end{aligned}
\end{equation}
According to \cite{Brezzi2009On}, the discrete system \eqref{weakdis} has an unique solution $(\bold{\sigma}_h^{\rm (RT,\ f)},u_h^{\rm (RT,\ f)})\in \text{RT}(\cT_h)\times U_{\text{RT}}$. Meanwhile, there exist the following optimal error estimates with detailed proofs referring to \cite{Douglas1985Global}
\begin{equation*}
\begin{split}
\parallel \bold{\sigma}^{\rm (f,\ S)}-\bold{\sigma}_h^{\rm (RT,\ f)}\parallel_{0,\Om}&\lesssim h|\bold{\sigma}^{\rm (f,\ S)}|_{1,\Om},\label{RTu}\\
\parallel \text{div}(\bold{\sigma}^{\rm (f,\ S)}-\bold{\sigma}_h^{\rm (RT,\ f)})\parallel_{0,\Om}&\lesssim h|\bold{\sigma}^{\rm (f,\ S)}|_{2,\Om},
\end{split}
\end{equation*}
provided that $\sigma^{\rm (f,\ S)} \in H^2(\Om,\mathbb{R}^2)$.

The Fortin interpolation operator, which is widely used in error analysis, such as \cite{Douglas1985Global,Dur1990Superconvergence}, is defined by $\Pi^{\rm RT}_h:H^1(\Om,\mathbb{R}^2)\rightarrow \text{RT}(\cT_h)$ as
\begin{equation*}\label{def:fortin}
\int_e (\Pi^{\rm RT}_h \bold{\tau}-\bold{\tau})^T\bold{n}_e\ds=0\text{\quad for any }e\in \cE_h, \bold{\tau}\in H^1(\Om,\mathbb{R}^2).
\end{equation*}
It is proved in \cite{Raviart1977A} that for any $\tau \in H^1(\Om,\mathbb{R}^2)$,
\begin{align}
(\text{div}(\bold{\tau}-\Pi^{\rm RT}_h\bold{\tau}),v_h)&=0\quad \text{for any }v_h\in U_{\text{RT}},\label{fortinId}\\
\parallel \bold{\tau}- \Pi^{\rm RT}_h\bold{\tau}\parallel_{0,\Om}&\lesssim h|\bold{\tau}|_{1,\Om}.\label{fortin}
\end{align}
It follows from \eqref{weakdis} and \eqref{fortinId} that
\begin{equation*}\label{solinter}
\text{div} \bold{\sigma}_h^{\rm (RT,\ f)}= \text{div} \Pi^{\rm RT}_h\bold{\sigma}^{\rm (f,\ S)}.
\end{equation*}
Therefore, $\bold{\sigma}_h^{\rm (RT,\ f)} - \Pi^{\rm RT}_h\bold{\sigma}^{\rm (f,\ S)}\in \rm RT(\cT_h)$ is divergence free, and is a piecewise constant vector field. Hence, a substitution of $\bold{\tau}_h=\bold{\sigma}_h^{\rm (RT,\ f)} - \Pi^{\rm RT}_h\bold{\sigma}^{\rm (f,\ S)}$ into \eqref{weak} and \eqref{weakdis} yields
\be\label{sigtosigh}
(\bold{\sigma}^{\rm (f,\ S)},\bold{\sigma}_h^{\rm (RT,\ f)}-\Pi^{\rm RT}_h\bold{\sigma}^{\rm (f,\ S)})=(\bold{\sigma}_h^{\rm (RT,\ f)},\bold{\sigma}_h^{\rm (RT,\ f)}-\Pi^{\rm RT}_h\bold{\sigma}^{\rm (f,\ S)}).
\ee

Throughout this section, the superconvergence result of the gradient recovery operator in \cite{Hu2016Superconvergence} requires triangulations to be uniform:
\begin{Def}
A triangulation $\cT_h$ of $\Om$ is said to be uniform if any two adjacent triangles of $\cT_h$ form a parallelogram.
\end{Def}

For any triangle $K\in\cT_h$, from the three outer unit normal vectors, denote the two which are closest to orthogonal by $\bold{f}_1$ and $\bold{f}_2$. This procedure is in general not unique, however, only the directions of vectors are focused, thus there will be no restriction.

For each $i=1,\ 2$, denote a parallelogram, which consists of two triangles sharing a side with normal $\bold{f}_i$, by $N_{\bold{f}_i}$. We partition the domain $\Om$ into those parallelograms $N_{\bold{f}_i} $ and some resulted boundary triangles, and denote these boundary triangles by $K_{\bold{f}_i} $. In an element $K$, we denote the edge to which the unit normal vector is $\bold{f}_i$ by $e_{\bold{f}_i}$, the length of $e_{\bold{f}_i}$ by $h_{\bold{f}_i}$, and the unit tangent vector of $e_{\bold{f}_i}$ with counterclockwise by $\bold{t}_{\bold{f}_i}$. We denote the two endpoints of the edge $e_{\bold{f}_i} $ by $\bold{p}_{\bold{f}_i}^1$ and $\bold{p}_{\bold{f}_i}^2$, and $\bold{p}_{\bold{f}_i}^1\bold{p}_{\bold{f}_i}^2 =h_{\bold{f}_i}\bold{t}_{\bold{f}_i}$.
Define
$$
\cP_b:=\big \{\bold{p} \in \partial \Om: \bold{p} \text{ is a vertex of } K_{\bold{f}_i}\big \}.
$$
Decompose the set $\cP_b$ into two parts $ \cP_b=\cP_b^1\cup \cP_b^2$, where $\cP_b^1$ is the set of vertices of the domain $\Om$, and $\cP_b^2$ refers to the remaining vertices. For any vertex $\bold{p}\in \cP_b^1$, denote the unique boundary triangle $K_{\bold{f}_i} $ by $K_\bold{p}$, and for any vertex $\bold{p}\in \cP_b^2$, denote the two boundary triangles $K_{\bold{f}_i} $ sharing $\bold{p}$ by $K^l_\bold{p}$ and $K^r_\bold{p}$, where
$$
K^r_\bold{p}=\{\bold{x}+h_{\bold{f}_i}\bold{t}_{\bold{f}_i}:\bold{x}\in K^l_\bold{p}\}.
$$
For any $\bold{p}\in \cP_b^2$, let $\omega_{\bold{p}}$ be the trapezoid which is made up of three elements and $\bold{p}$ is a midpoint of its edge, see Figure \ref{fig:triangulation}. And $|\cP_b^1|=\kappa$ is the number of the elements in $\cP_b^1$, it is known that $\kappa $ is a fixed number independent of $h$. Figure \ref{fig:triangulation} shows an example of the definitions and notations concerning a triangulation.

\begin{figure}[!ht]
\begin{center}
\begin{tikzpicture}[xscale=6,yscale=6]
\draw[-] (0,0) -- (1.75,0);
\draw[-] (-0.125,0.9) -- (1.625,0.9);
\draw[-] (0.375,0.9) -- (0,0);
\draw[-] (0.625,0.9) -- (0.25,0);
\draw[-] (0.875,0.9) -- (0.5,0);
\draw[-] (1.125,0.9) -- (0.75,0);
\draw[-] (1.375,0.9) -- (1,0);
\draw[-] (1.625,0.9) -- (1.25,0);
\draw[-] (1.75,0.6) -- (1.5,0);
\draw[-] (0.125,0.9) -- (0.5,0);
\draw[-] (0.375,0.9) -- (0.75,0);
\draw[-] (0.625,0.9) -- (1,0);
\draw[-] (0.875,0.9) -- (1.25,0);
\draw[-] (1.125,0.9) -- (1.5,0);
\draw[-] (1.375,0.9) -- (1.75,0);
\draw[-] (1.625,0.9) -- (1.875,0.3);
\draw[-] (-0.125,0.9) -- (0.25,0);
\draw[-] (0.125,0.9) -- (0,0.6);
\draw[-] (0,0.6) -- (1.75,0.6);
\draw[-] (0.125,0.3) -- (1.875,0.3);
\draw[-] (1.75,0) -- (1.875,0.3);

\draw[-] (0.15,0.24) -- (0.275,0.54);
\draw[-] (0.175,0.18) -- (0.3,0.48);
\draw[-] (0.2,0.12) -- (0.325,0.42);
\draw[-] (0.225,0.06) -- (0.35,0.36);

\draw[-] (0.675,0.3) -- (0.8,0.6);
\draw[-] (0.725,0.3) -- (0.85,0.6);
\draw[-] (0.775,0.3) -- (0.9,0.6);
\draw[-] (0.825,0.3) -- (0.95,0.6);

\draw[-] (-0.075,0.9) -- (0.025,0.66);
\draw[-] (-0.025,0.9) -- (0.05,0.72);
\draw[-] (0.025,0.9) -- (0.075,0.78);
\draw[-] (0.075,0.9) -- (0.1,0.84);

\draw[-] (0.175,0.9) -- (0.275,0.66);
\draw[-] (0.225,0.9) -- (0.3,0.72);
\draw[-] (0.275,0.9) -- (0.325,0.78);
\draw[-] (0.325,0.9) -- (0.35,0.84);

\draw[-] (0.425,0.9) -- (0.525,0.66);
\draw[-] (0.475,0.9) -- (0.55,0.72);
\draw[-] (0.525,0.9) -- (0.575,0.78);
\draw[-] (0.575,0.9) -- (0.6,0.84);

\draw[-] (0.675,0.9) -- (0.775,0.66);
\draw[-] (0.725,0.9) -- (0.8,0.72);
\draw[-] (0.775,0.9) -- (0.825,0.78);
\draw[-] (0.825,0.9) -- (0.85,0.84);

\draw[-] (0.925,0.9) -- (1.025,0.66);
\draw[-] (0.975,0.9) -- (1.05,0.72);
\draw[-] (1.025,0.9) -- (1.075,0.78);
\draw[-] (1.075,0.9) -- (1.1,0.84);

\draw[-] (1.175,0.9) -- (1.275,0.66);
\draw[-] (1.225,0.9) -- (1.3,0.72);
\draw[-] (1.275,0.9) -- (1.325,0.78);
\draw[-] (1.325,0.9) -- (1.35,0.84);

\draw[-] (1.425,0.9) -- (1.525,0.66);
\draw[-] (1.475,0.9) -- (1.55,0.72);
\draw[-] (1.525,0.9) -- (1.575,0.78);
\draw[-] (1.575,0.9) -- (1.6,0.84);

\draw[-] (0.775,0.06) -- (1.225,0.06);
\draw[-] (0.8,0.12) -- (1.2,0.12);
\draw[-] (0.825,0.18) -- (1.175,0.18);
\draw[-] (0.85,0.24) -- (1.15,0.24);
\node at (1,0.18) {$\omega_\bold{p}$};

\draw[ultra thick,->] (0.5,0) -- (0.75,0);
\node[above] at (0.63,0) {$h_{\bold{f}_1}\bold{t}_{\bold{f}_1}$};
\draw[ultra thick,->] (0.75,0) -- (0.625,0.3);
\node at (0.75,0.2) {$h_{\bold{f}_2}\bold{t}_{\bold{f}_2}$};

\node[below] at (0.5,0) {$ \bold{p}_{\bold{f}_1}^1$};
\node[below] at (0.75,0) {$ \bold{p}_{\bold{f}_1}^2$};
\node at (0,0.8) {$K_\bold{{f_1}}$};
\node at (0.25,0.8) {$K_\bold{{f_1}}$};
\node at (0.5,0.8) {$K_\bold{{f_1}}$};
\node at (0.75,0.8) {$K_\bold{{f_1}}$};
\node at (1,0.8) {$K_\bold{{f_1}}$};
\node at (1.25,0.8) {$K_\bold{{f_1}}$};
\node at (1.5,0.8) {$K_\bold{{f_1}}$};

\node at (0.25,0.4) {$N_\bold{{f_1}}$};
\node at (0.78,0.4) {$N_\bold{{f_2}}$};

\draw[->] (0.625,0.6) -- (0.625,0.67);
\draw[->] (0.5625,0.45) -- (0.495,0.422);
\node at (0.68,0.65) {$\bold{f_1}$};
\node at (0.555,0.4) {$\bold{f_2}$};

\fill(0,0) circle(0.5pt);
\fill(0.25,0) circle(0.3pt);
\fill(0.5,0) circle(0.3pt);
\fill(0.75,0) circle(0.3pt);
\fill(1,0) circle(0.3pt);
\fill(1.25,0) circle(0.3pt);
\fill(1.5,0) circle(0.3pt);
\fill(1.75,0) circle(0.5pt);

\fill(-0.125,0.9) circle(0.5pt);
\fill(0.125,0.9) circle(0.3pt);
\fill(0.375,0.9) circle(0.3pt);
\fill(0.625,0.9) circle(0.3pt);
\fill(0.875,0.9) circle(0.3pt);
\fill(1.125,0.9) circle(0.3pt);
\fill(1.375,0.9) circle(0.3pt);
\fill(1.625,0.9) circle(0.5pt);

\node[below] at (1.05,0) {$\bold{p}\in \cP_b^2$};
\node[below] at (1.75,0) {$\bold{p}\in \cP_b^1$};
\node at (1.65,0.1) {$K_\bold{p}$};
\node at (0.9,0.1) {$K^l_\bold{p}$};
\node at (1.15,0.1) {$K^r_\bold{p}$};
\end{tikzpicture}
\caption{An uniform triangulation of $\Om$.}
\label{fig:triangulation}
\end{center}
\end{figure}
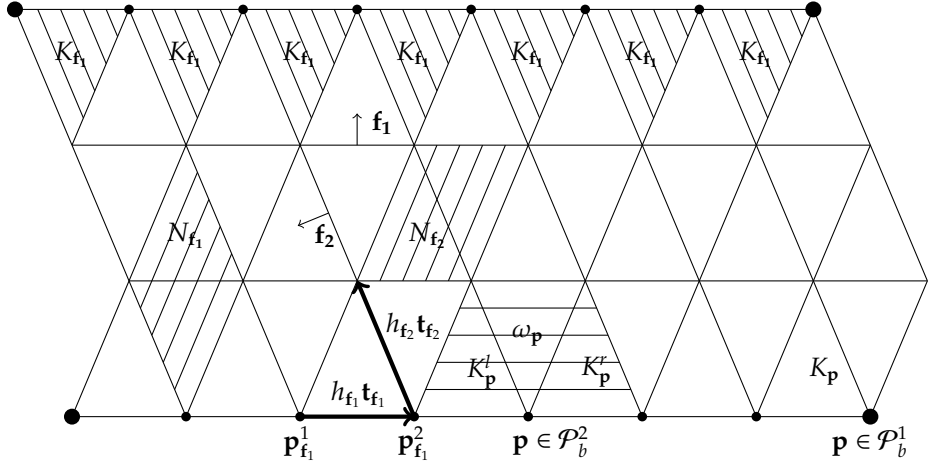

We will need some results on Sobolev spaces. Denote the subset of the points in $\Om$ having distance less than $h$ from the boundary by $\partial_h \Om$:
$$
\partial_h\Om=\{\bold{x}\in\Om:\exists \bold{y}\in\partial \Om \text{ such that dist}(\bold{x},\bold{y})\leq h\}.
$$
We have the following result, see \cite{Brandts1994Superconvergence} and the references therein.
\begin{Lm}\label{bdtodomain}
For $v\in H^s(\Om,\mathbb{R})$, where $0\leq s\leq \frac{1}{2}$,
$$
\parallel v \parallel_{0,\partial_h \Om}\lesssim h^s\parallel v\parallel_{s,\Om}.
$$
\end{Lm}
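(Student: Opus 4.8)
The plan is to prove the estimate by real interpolation between the two endpoints $s=0$ and $s=\frac12$, viewing the restriction-to-the-layer map $R_h\colon v\mapsto v|_{\partial_h\Om}$ as a linear operator and tracking its $h$-dependent norm from $H^s(\Om,\mathbb{R})$ into $L^2(\partial_h\Om,\mathbb{R})$. The endpoint $s=0$ is immediate: since $\partial_h\Om\subset\Om$, one has $\parallel v\parallel_{0,\partial_h\Om}\le\parallel v\parallel_{0,\Om}$, i.e. $\parallel R_h\parallel_{L^2\to L^2}\le 1=h^0$. The whole content therefore lies in the borderline endpoint $s=\frac12$, for which I would prove $\parallel v\parallel_{0,\partial_h\Om}\lesssim h^{1/2}\parallel v\parallel_{1/2,\Om}$; the intermediate exponents then follow because $[L^2(\Om,\mathbb{R}),H^{1/2}(\Om,\mathbb{R})]_{2s}=H^s(\Om,\mathbb{R})$ with norms independent of $h$, and the interpolation inequality for operator norms gives $\parallel R_h\parallel_{H^s\to L^2}\lesssim\parallel R_h\parallel_{L^2\to L^2}^{1-2s}\parallel R_h\parallel_{H^{1/2}\to L^2}^{2s}\lesssim h^s$.

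For the endpoint $s=\frac12$ the first step is to localize. Because $\Om$ is a bounded polygon, its boundary is covered by finitely many charts in each of which, after a bi-Lipschitz flattening that distorts the $H^{1/2}$ norm only by fixed constants, $\Om$ looks like a half-plane and the layer $\partial_h\Om$ sits in a slab $S_{ch}=\mathbb{R}\times(0,ch)$ of thickness comparable to $h$. A partition of unity subordinate to this cover, together with a bounded extension of $v$ to the whole plane, reduces matters to the model estimate
\begin{equation*}
\int_{S_h}|v|^2\dx\lesssim h\parallel v\parallel_{1/2,\mathbb{R}^2}^2.
\end{equation*}
This I would establish by writing $\parallel v\parallel_{0,S_h}^2=\int_{\mathbb{R}}\int_0^h|\tilde v(\xi_1,x_2)|^2\,dx_2\,d\xi_1$, where $\tilde v$ is the partial Fourier transform in the tangential variable, and comparing the resulting one–dimensional kernel in $x_2$ against the $H^{1/2}$ weight $(1+\xi_1^2+\xi_2^2)^{1/2}$; equivalently one uses the intrinsic Gagliardo seminorm, exploiting that at $s=\frac12$ the kernel exponent $n+2s=n+1$ is exactly critical, and splits $v$ into an interior-averaged part, which carries the factor $h$ from $|S_h|$, and an oscillatory remainder controlled by the seminorm.

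The step I expect to be the genuine obstacle is precisely this $s=\frac12$ slab estimate, because it sits at the borderline of the trace theorem: the trace $H^{1/2}\to L^2$ onto a single line is \emph{not} bounded, so no slicewise trace inequality can simply be integrated in the normal direction, and the most naive estimates (a pointwise-in-tangential-frequency reduction, or comparison to a fixed interior ball) lose either a logarithm or the factor $h$. The constant function shows that the exponent $h^{1/2}$ is sharp and, in particular, that the analogous bound at $s=1$ with exponent $h^1$ is false; this is exactly why the lemma must stop at $s=\frac12$, and why the interpolation has to be anchored at $s=\frac12$ rather than at $s=1$. Anchoring at $s=1$ would only yield the weaker $h^{s/2}$, i.e. the half-order loss that the rest of the paper has to circumvent by other means. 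Once the borderline endpoint is secured, the interpolation and the reassembly of the local charts are routine.
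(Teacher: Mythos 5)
First, a point of comparison: the paper never actually proves Lemma \ref{bdtodomain} --- it quotes it from \cite{Brandts1994Superconvergence} ``and the references therein'' (ultimately Lions--Magenes-type results) --- so your proposal has to stand on its own, and it does not: the step you yourself identify as the crux, the endpoint slab estimate $\int_{S_h}|v|^2\dx\lesssim h\parallel v\parallel_{1/2,\mathbb{R}^2}^2$, is not merely delicate but false, so no partial-Fourier or Gagliardo-seminorm computation can deliver it. In one dimension, take $f$ with $\hat f(\xi)=(1+|\xi|)^{-1}\big(\log(e+|\xi|)\big)^{-\alpha}$ for fixed $\frac12<\alpha<1$. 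Then $\parallel f\parallel_{1/2,\mathbb{R}}<\infty$ because $2\alpha>1$, while a low/high-frequency split shows $f(x)\gtrsim\big(\log(1/|x|)\big)^{1-\alpha}$ as $x\to0$, hence
$$
\int_0^h|f(x)|^2\,dx\ \gtrsim\ h\,\big(\log(1/h)\big)^{2(1-\alpha)},
$$
so the ratio of the two sides of your slab estimate diverges as $h\to0$. Tensoring $f(x_2)$ with a fixed tangential bump $\phi(x_1)$ transports this to $H^{1/2}$ of a square: the $L^2$ mass on the boundary strip exceeds $h^{1/2}\parallel\cdot\parallel_{1/2,\Om}$ by a power of $\log(1/h)$. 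The true endpoint bound carries an unavoidable $|\ln h|^{1/2}$; equivalently, the Hardy inequality $\parallel d^{-s}v\parallel_{0,\Om}\lesssim\parallel v\parallel_{s,\Om}$ ($d$ the distance to $\partial\Om$) holds for $s<\frac12$ and fails at $s=\frac12$ --- the $H^{1/2}_{00}\neq H^{1/2}$ phenomenon, which is exactly the borderline pathology the paper alludes to when it cites the counterexample of \cite{J1972Non}. Your own hedge, that naive arguments ``lose either a logarithm or the factor $h$,'' is in fact the truth about every argument.

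The gap is architectural, not reparable by more care at the endpoint. Even if you install the correct, log-corrected endpoint $\parallel R_h\parallel_{H^{1/2}\to L^2}\lesssim(h|\ln h|)^{1/2}$, interpolation against $s=0$ returns only $\parallel R_h\parallel_{H^{s}\to L^2}\lesssim h^{s}|\ln h|^{s}$, so every intermediate exponent inherits the logarithm and the clean rate $h^s$ asserted by the lemma is never recovered. The standard proof runs in the opposite direction: fix $s<\frac12$ and argue directly. After your (perfectly fine) localization and flattening, slice in the normal variable and combine H\"older on $(0,h)$ with the one-dimensional embedding $H^{s}(\mathbb{R})\hookrightarrow L^{2/(1-2s)}(\mathbb{R})$, giving
$$
\int_0^h|v(x_1,x_2)|^2\,dx_2\ \le\ h^{2s}\parallel v(x_1,\cdot)\parallel_{L^{2/(1-2s)}(\mathbb{R})}^2\ \lesssim\ h^{2s}\parallel v(x_1,\cdot)\parallel_{s,\mathbb{R}}^2,
$$
then integrate in $x_1$, using Plancherel in the tangential variable to dominate the result by $h^{2s}\parallel v\parallel_{s,\mathbb{R}^2}^2$; equivalently, invoke the Lions--Magenes Hardy inequality together with $d\le h$ on $\partial_h\Om$. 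Either way the constant necessarily blows up as $s\to\frac12$, which is how the lemma's range $0\le s\le\frac12$ must be read; this costs at most extra logarithms where the paper applies the lemma at $s=\frac12$, and its final superconvergence bounds carry $|\ln h|^{1/2}$ factors anyway. Your peripheral observations are correct --- the constant function caps the exponent at $h^{1/2}$, and anchoring the interpolation at $s=1$ only yields $h^{s/2}$ --- but the central claim, that the $s=\frac12$ endpoint can be proven without loss, is the one thing that cannot be fixed.
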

We recall the following discrete Helmholtz decomposition and refer to \cite{Jan2000Superconvergence} for more details.
\begin{Lm}\label{lemma2}
For any function $\bold{\tau}_h\in \rm RT(\cT_h)$ which satisfies
$
\text{div } \bold{\tau}_h=0,
$
$$
\bold{\tau}_h\in \bold{curl} \cP_1,
$$
where $ \cP_1=\{v\in H^1_0(\Om,\mathbb{R}):v|_K\in P_1(K)\ \text{ for any }K\in\cT_h\}$.
\end{Lm}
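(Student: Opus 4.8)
The plan is to produce, for a given divergence-free $\bold{\tau}_h\in\text{RT}(\cT_h)$, an explicit potential $v\in\cP_1$ with $\bold{\tau}_h=\bold{curl}\,v$. First I would extract the structure forced by $\text{div}\,\bold{\tau}_h=0$: since $\text{div}(\bold{a}+b\bold{x})=2b$ on each $K$, the constraint kills the $\bold{x}P_0(K)$ component, so $\bold{\tau}_h$ is piecewise constant; as an element of $H(\text{div},\Om,\R^2)$ it moreover has continuous normal component across every interior edge. Rotating pointwise by $90^{\circ}$, set $\bold{w}_h:=\bold{\tau}_h^{\perp}$, so that $\bold{curl}\,v=\bold{\tau}_h$ is equivalent to $\na v=\bold{w}_h$. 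Then $\bold{w}_h$ is piecewise constant, is curl-free on each $K$, and inherits continuous \emph{tangential} component across interior edges. The target is thus the standard discrete de Rham statement that a curl-free, tangentially continuous, piecewise constant field on a simply connected domain is the gradient of a continuous piecewise linear function.

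To build $v$, I would fix its value at one vertex and define $v$ along the edge graph of $\cT_h$ by path integration of $\bold{w}_h$, then declare $v|_K$ to be the unique affine function on $K$ whose gradient is $\bold{w}_h|_K$ and whose vertex values match the prescribed ones. Tangential continuity of $\bold{w}_h$ guarantees that the increments of $v$ computed from the two triangles sharing an edge agree along that edge, so adjacent affine pieces coincide on shared edges once their vertex values coincide.

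The crux, and the step I expect to be the main obstacle, is single-valuedness of $v$: one must show that integrating $\bold{w}_h$ around any closed path in the edge graph returns to the starting value, in particular around the link of each interior vertex. This is a discrete Stokes/monodromy argument: a closed vertex path bounds a union of triangles (here simple connectivity of $\Om$ is essential), and the circulation of $\bold{w}_h$ around it equals the sum of the elementwise curls of $\bold{w}_h$ over the enclosed triangles, each of which vanishes. Once $v$ is single-valued it is globally continuous and piecewise affine with $\na v=\bold{w}_h$, i.e. $\bold{curl}\,v=\bold{\tau}_h$. Finally, to land in $\cP_1\subset H^1_0(\Om,\R)$ I use that $\bold{\tau}_h$ has vanishing normal flux on $\pa\Om$, equivalently that the tangential trace of $\bold{w}_h$ vanishes there, as is the case in the applications of this lemma; then $v$ is constant along $\pa\Om$ and normalizing that constant to zero gives $v\in H^1_0(\Om,\R)$.

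As a cross-check one can replace the monodromy argument by a dimension count. The easy inclusion $\bold{curl}\,\cP_1\subseteq\{\bold{\tau}_h\in\text{RT}(\cT_h):\text{div}\,\bold{\tau}_h=0,\ \bold{\tau}_h\cdot\bold{n}|_{\pa\Om}=0\}$ is immediate from the first paragraph. Combining $\dim\bold{curl}\,\cP_1=\dim\cP_1=V_I$ (the operator $\bold{curl}$ being injective on $\cP_1$, since a continuous piecewise constant function vanishing on $\pa\Om$ is $0$) with the identity $\dim\{\bold{\tau}_h:\text{div}\,\bold{\tau}_h=0,\ \bold{\tau}_h\cdot\bold{n}|_{\pa\Om}=0\}=V_I$, obtained from surjectivity of $\text{div}$ onto the mean-zero piecewise constants and Euler's relation $V-E+T=1$, forces equality of the two spaces; here $V,E,T$ and $V_I$ denote the numbers of vertices, edges, triangles, and interior vertices of $\cT_h$.
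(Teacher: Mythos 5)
The paper itself gives no proof of this lemma: it is ``recalled'' as a known discrete Helmholtz decomposition with a pointer to \cite{Jan2000Superconvergence}, so your argument fills a gap rather than paralleling one. What you propose is the standard discrete de Rham construction, and it is correct in its essentials: $\operatorname{div}\bold{\tau}_h=0$ kills the $\bold{x}P_0(K)$ component elementwise, $H(\operatorname{div})$-conformity becomes tangential continuity of the rotated field $\bold{w}_h$, vertex values of the potential are defined by path integration along the edge graph, and single-valuedness follows from elementwise Stokes together with simple connectivity of $\Om$ --- a hypothesis you rightly flag as essential (the paper's standing assumption of a ``bounded polygonal domain'' must be read as simply connected; on a domain with a hole the statement fails, since a divergence-free RT field with nonzero circulation around the hole is not a $\bold{curl}$). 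Your dimension-count cross-check is also sound, granted the standard (inf-sup) surjectivity of $\operatorname{div}$ onto mean-zero piecewise constants, which you assert but do not prove; that is acceptable as a cited fact.

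The one substantive point concerns the boundary condition, and here your instinct is right but your escape hatch is not. As you observe, the lemma as printed, with $\cP_1\subset H^1_0(\Om,\mathbb{R})$, is false without the extra hypothesis $\bold{\tau}_h\cdot\bold{n}=0$ on $\pa\Om$ (a nonzero constant field is a counterexample). However, your claim that this flux condition holds ``in the applications of this lemma'' does not survive inspection of this paper: the lemma is applied to $\bold{\tau}_h=\bold{\sigma}_h^{\rm (RT,\ f)}-\Pi_h^{\rm RT}\bold{\sigma}^{\rm (f,\ S)}$, whose normal trace on $\pa\Om$ need not vanish (the mixed method imposes no boundary flux condition on $\bold{\sigma}_h^{\rm (RT,\ f)}$, and $\Pi_h^{\rm RT}$ only matches edge-averaged fluxes of $\bold{\sigma}^{\rm (f,\ S)}$). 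Consistently, the paper's estimate of $\bold{I}_{11}^2$ retains the boundary vertex values $w_h(\bold{p})$, $\bold{p}\in\cP_b$, which would all vanish identically if $w_h\in H^1_0(\Om,\mathbb{R})$. So the honest repair is not to add your hypothesis but to weaken the conclusion: the version your construction actually proves once the final normalization step is dropped --- $\bold{\tau}_h=\bold{curl}\,w_h$ with $w_h$ continuous piecewise linear in $H^1(\Om,\mathbb{R})$, unique up to an additive constant --- is exactly what the paper needs, since after fixing the constant (say by zero mean) one has $\parallel w_h\parallel_{1,h}\lesssim \parallel \nabla w_h\parallel_{0,\Om}=\parallel\bold{\tau}_h\parallel_{0,\Om}$ by Poincar\'e, which is all that the passage from \eqref{2termfinal} to the superconvergence bound uses. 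With that single adjustment your proof is complete and, unlike the paper's citation, self-contained.
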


Assume that the triangulation $\cT_h$ is uniform. Suppose that the solution of the problem \eqref{weak} satisfies $\sigma^{\rm (f,\ S)}\in H^{\frac{5}{2}}(\Om,\mathbb{R}^2)$. Define a matrix $F$, the transportation of which has the two unit normal vectors $\bold{f}_1$ and $\bold{f}_2$ as columns. Denote the canonical basis vectors of $\mathbb{R}^2$ in respectively the $x_1$- and $x_2$-direction by $\bold{e}_1$ and $\bold{e}_2$. By \eqref{sigtosigh},
\begin{equation*}
\begin{split}
\parallel \bold{\sigma}_h^{\rm (RT,\ f)}-\Pi^{\rm RT}_h\bold{\sigma}^{\rm (f,\ S)}\parallel_{0,\Om}^2&=\big (F(\bold{\sigma}_h^{\rm (RT,\ f)}-\Pi^{\rm RT}_h\bold{\sigma}^{\rm (f,\ S)}),F^{-T}(\bold{\sigma}^{\rm (f,\ S)}-\Pi^{\rm RT}_h\bold{\sigma}^{\rm (f,\ S)})\big )\\
&=\sum_{K\in\cT_h} \int_K \big (F(\bold{\sigma}_h^{\rm (RT,\ f)}-\Pi^{\rm RT}_h\bold{\sigma}^{\rm (f,\ S)})\big )^TF^{-T}\big (\bold{\sigma}^{\rm (f,\ S)}-\Pi^{\rm RT}_h\bold{\sigma}^{\rm (f,\ S)}\big )\dx\\
&=\sum_{i,j=1}^2\bold{I}_{ij},
\end{split}
\end{equation*}
where
$$
\bold{I}_{ij}=\sum_{K\in\cT_h} \int_K \bold{e}_i^T F(\bold{\sigma}_h^{\rm (RT,\ f)}-\Pi^{\rm RT}_h\bold{\sigma}^{\rm (f,\ S)})(\bold{e}_i^TF^{-T}\bold{e}_j)(\bold{\sigma}^{\rm (f,\ S)}-\Pi^{\rm RT}_h\bold{\sigma}^{\rm (f,\ S)})^T\bold{e}_j\dx.
$$
For simplicity, only the sum $\bold{I}_{11}$ is considered here. Let $\Om$ be partitioned into parallelograms $N_{\bold{f}_1}$ and the remaining boundary triangles $K_{\bold{f}_1}$. Since $\bold{\sigma}_h^{\rm (RT,\ f)}-\Pi^{\rm RT}_h\bold{\sigma}^{\rm (f,\ S)}$ is piecewise constant, the sum $\bold{I}_{11}$ can be written as a sum over parallelograms $N_{\bold{f}_1}$ and boundary triangles $K_{\bold{f}_1}$:
\be\label{totalterm}
|\bold{I}_{11}|\leq |\bold{I}_{11}^1|+|\bold{I}_{11}^2|,
\ee
where
$$
\bold{I}_{11}^1=(\bold{e}_1^TF^{-T}\bold{e}_1)\sum_{N_{\bold{f}_1}}\int_{N_{\bold{f}_1}}\bold{e}_1^T F(\bold{\sigma}_h^{\rm (RT,\ f)}-\Pi^{\rm RT}_h\bold{\sigma}^{\rm (f,\ S)})(\bold{\sigma}^{\rm (f,\ S)}-\Pi^{\rm RT}_h\bold{\sigma}^{\rm (f,\ S)})^T\bold{e}_1\dx,
$$
\be\label{I2original}
\bold{I}_{11}^2=(\bold{e}_1^TF^{-T}\bold{e}_1)\sum_{K_{\bold{f}_1}}\bold{e}_1^T F(\bold{\sigma}_h^{\rm (RT,\ f)}-\Pi^{\rm RT}_h\bold{\sigma}^{\rm (f,\ S)})\int_{K_{\bold{f}_1}} (\bold{\sigma}^{\rm (f,\ S)}-\Pi^{\rm RT}_h\bold{\sigma}^{\rm (f,\ S)})^T\bold{e}_1\dx.
\ee
Note that $\bold{e}_1^TF(\bold{\sigma}_h^{\rm (RT,\ f)}-\Pi^{\rm RT}_h\bold{\sigma}^{\rm (f,\ S)})=(\bold{\sigma}_h^{\rm (RT,\ f)}-\Pi^{\rm RT}_h\bold{\sigma}^{\rm (f,\ S)})^T\bold{f}_1$ is the normal component of $\bold{\sigma}_h^{\rm (RT,\ f)}-\Pi^{\rm RT}_h\bold{\sigma}^{\rm (f,\ S)}$ to the shared side of the two triangles forming a parallelogram $N_{\bold{f}_1}$. Thus, $\bold{e}_1^TF(\bold{\sigma}_h^{\rm (RT,\ f)}-\Pi^{\rm RT}_h\bold{\sigma}^{\rm (f,\ S)})$ is constant on $N_{\bold{f}_1}$, and therefore, leads to the following superconvergence property \cite{Brandts1994Superconvergence}:
\be\label{1termfinal}
|\bold{I}_{11}^1|\lesssim h^2\parallel \bold{\sigma}_h^{\rm (RT,\ f)}-\Pi^{\rm RT}_h\bold{\sigma}^{\rm (f,\ S)}\parallel_{0,\Om}|\bold{\sigma}^{\rm (f,\ S)}|_{2,\Om}.
\ee
For the sum $\bold{I}_{11}^2$ of boundary terms, the analysis in \cite{Brandts1994Superconvergence} showed
\be\label{2termI11}
|\bold{I}_{11}^2|\lesssim h^{3/2}\parallel \bold{\sigma}_h^{\rm (RT,\ f)}-\Pi^{\rm RT}_h\bold{\sigma}^{\rm (f,\ S)}\parallel_{0,\Om}|\bold{\sigma}^{\rm (f,\ S)}|_{\frac{3}{2},\Om}.
\ee
The estimate \eqref{2termI11} is resulted from a direct application of Lemma \ref{bdtodomain}. Since the estimate in Lemma \ref{bdtodomain} can not be improved as showed by a counter example in \cite{J1972Non}, it is very difficult to improve the factor in \eqref{2termI11} from $h^{3/2}$ to $h^2$ following that analysis.

A new analysis for a full one order superconvergence of the RT element is presented in the following. The main idea here is to employ a discrete Helmholtz decomposition of $\bold{\sigma}_h^{\rm (RT,\ f)}-\Pi^{\rm RT}_h\bold{\sigma}^{\rm (f,\ S)}$. In particular, it allows for some vital cancellation between the boundary terms in $\bold{I}_{11}^2$ sharing a common vertex.

Firstly, we present the following property of the interpolation operator $\Pi^{\rm RT}_h$.
\begin{Lm}\label{linearsigma}
For any $\bold{p}\in\cP_b^2$, $K_{\bold{p}}^l$, $K_{\bold{p}}^r\in K_{\bold{f}_i}$, $i=1,\ 2$, if $\bold{\sigma}$ is linear on the patch $\omega_{\bold{p}}$,
\begin{equation*}
\begin{split}
\int_{K_{\bold{p}}^l}(\bold{\sigma}-\Pi^{\rm RT}_h\bold{\sigma})\dx= \int_{K_{\bold{p}}^r}(\bold{\sigma}-\Pi^{\rm RT}_h\bold{\sigma})\dx.
\end{split}
\end{equation*}
\end{Lm}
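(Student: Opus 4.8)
The plan is to exploit the fact that $K^r_{\bold{p}}$ is an exact translate of $K^l_{\bold{p}}$, together with the translation equivariance of the local RT interpolation, so that the two interpolation errors coincide after shifting. Since the RT interpolation is local --- on each element $\Pi^{\rm RT}_h$ is determined solely by the three edge normal moments of that element --- I may replace $\Pi^{\rm RT}_h$ by the element interpolations $\Pi^{\rm RT}_{K^l_{\bold{p}}}$ and $\Pi^{\rm RT}_{K^r_{\bold{p}}}$. Because $\bold{\sigma}$ is affine on the connected patch $\omega_{\bold{p}}\supset K^l_{\bold{p}}\cup K^r_{\bold{p}}$, it is given there by a single expression $\bold{\sigma}(\bold{x})=\bold{c}+A\bold{x}$ with $\bold{c}\in\mathbb{R}^2$ and $A\in\mathbb{R}^{2\times 2}$ constant; in particular its restrictions to the two triangles have the same affine form.

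First I would record the translation equivariance of the local interpolation. Writing $\bold{s}:=h_{\bold{f}_i}\bold{t}_{\bold{f}_i}$ so that $K^r_{\bold{p}}=K^l_{\bold{p}}+\bold{s}$, I note that the shape space $\mathrm{RT}_K=(P_0(K))^2+\bold{x}P_0(K)$ is invariant under the pullback $\bold{x}\mapsto\bold{x}+\bold{s}$ (a function $\bold{a}+b\bold{x}$ becomes $(\bold{a}+b\bold{s})+b\bold{x}$), and that the defining degrees of freedom $\int_e(\cdot)^T\bold{n}_e\ds$ transform correctly: the shift carries each edge of $K^l_{\bold{p}}$ to the corresponding edge of $K^r_{\bold{p}}$, preserves the local outward unit normals, and leaves the line element unchanged (no reflection is involved). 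Comparing degrees of freedom then gives, for any $\bold{\tau}$,
$$
(\Pi^{\rm RT}_{K^r_{\bold{p}}}\bold{\tau})(\bold{x}+\bold{s})=\big(\Pi^{\rm RT}_{K^l_{\bold{p}}}[\bold{\tau}(\cdot+\bold{s})]\big)(\bold{x}),\qquad \bold{x}\in K^l_{\bold{p}}.
$$

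Next I would combine this with affineness. Since $\bold{\sigma}(\bold{x}+\bold{s})=\bold{\sigma}(\bold{x})+A\bold{s}$ and the constant vector $A\bold{s}$ lies in $\mathrm{RT}_{K^l_{\bold{p}}}$ (hence is reproduced exactly by the interpolation), one has $\Pi^{\rm RT}_{K^l_{\bold{p}}}[\bold{\sigma}(\cdot+\bold{s})]=\Pi^{\rm RT}_{K^l_{\bold{p}}}\bold{\sigma}+A\bold{s}$. Subtracting then yields the pointwise identity
$$
(\bold{\sigma}-\Pi^{\rm RT}_h\bold{\sigma})(\bold{x}+\bold{s})=(\bold{\sigma}-\Pi^{\rm RT}_h\bold{\sigma})(\bold{x}),\qquad \bold{x}\in K^l_{\bold{p}},
$$
so that the interpolation error on $K^r_{\bold{p}}$ is exactly the shift of the error on $K^l_{\bold{p}}$. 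Finally, the change of variables $\bold{y}=\bold{x}+\bold{s}$ (unit Jacobian) in $\int_{K^r_{\bold{p}}}(\bold{\sigma}-\Pi^{\rm RT}_h\bold{\sigma})\dx$ produces $\int_{K^l_{\bold{p}}}(\bold{\sigma}-\Pi^{\rm RT}_h\bold{\sigma})\dx$, which is the asserted equality.

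I expect the only delicate point to be the bookkeeping in the equivariance step: one must ensure that the edge-normal degrees of freedom pair up correctly under the shift (the same edges, the same outward normals), and that the additive constant $A\bold{s}$ created by the affine translation is absorbed exactly by the interpolation rather than contributing to the error. Both rely on $\bold{\sigma}$ being a \emph{single} affine field on $\omega_{\bold{p}}$, which is precisely the hypothesis; everything else reduces to a change of variables with Jacobian one.
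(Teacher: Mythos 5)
Your proof is correct, and while it rests on the same geometric fact as the paper's --- $K^r_{\bold{p}}$ is the exact translate of $K^l_{\bold{p}}$ by $\bold{s}=h_{\bold{f}_i}\bold{t}_{\bold{f}_i}$ --- it executes the idea differently and in fact proves more. The paper works concretely: it writes the error as $(I-\Pi^{\rm RT}_h)\big(\nabla\bold{\sigma}\cdot(\bold{x}-\bold{p})\big)$, reduces to the centroid using that the interpolation kills constants and that $\int_K(\bold{x}-\bold{M}_K)\dx=0$, expands the remaining interpolant in the local RT basis $\phi_i=\frac{1}{d_i}(\bold{x}-\bold{p}_i)$, and then matches, between the two triangles, both the coefficients $(\nabla\bold{\sigma}\cdot(\bold{m}_i-\bold{M}_K))^T\bold{n}_i$ (via $\bold{n}_i^l=\bold{n}_i^r$ and $\bold{m}_i^l-\bold{M}_{K^l_{\bold{p}}}=\bold{m}_i^r-\bold{M}_{K^r_{\bold{p}}}$) and the basis integrals $\int_{K^l_{\bold{p}}}\phi_i^l\dx=\int_{K^r_{\bold{p}}}\phi_i^r\dx$; this yields only the equality of the two integrals. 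You instead isolate an abstract translation equivariance of the local Fortin interpolation --- the shape space $(P_0(K))^2+\bold{x}P_0(K)$ is stable under the pullback by a shift, edges and outward normals correspond under the translation, and unisolvence of the edge-normal moments forces $(\Pi^{\rm RT}_{K^r_{\bold{p}}}\bold{\tau})(\cdot+\bold{s})=\Pi^{\rm RT}_{K^l_{\bold{p}}}[\bold{\tau}(\cdot+\bold{s})]$ --- and absorb the constant $A\bold{s}$ by reproduction of constants. This gives the pointwise identity $(\bold{\sigma}-\Pi^{\rm RT}_h\bold{\sigma})(\bold{x}+\bold{s})=(\bold{\sigma}-\Pi^{\rm RT}_h\bold{\sigma})(\bold{x})$ on $K^l_{\bold{p}}$, which is strictly stronger than the lemma, and the stated equality follows by a unit-Jacobian change of variables. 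Your route trades the paper's basis-function bookkeeping for two routine structural checks (equivariance and constant reproduction), and it would generalize verbatim to any translation-equivariant local interpolation reproducing constants; the paper's computation, by contrast, is self-contained at the level of explicit formulas but delivers only the averaged statement needed downstream.
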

\begin{proof}
Denote the centroid, the vertices and the edges of element $K_{\bold{p}}^l$ by $\bold{M}_{K_{\bold{p}}^l }$, $\{\bold{p}_i^l\}_{i=1}^3$ and $\{e_i^l\}_{i=1}^3$, and those of element $K_{\bold{p}}^r$ by $\bold{M}_{K_{\bold{p}}^r}$, $\{\bold{p}_i^r\}_{i=1}^3$ and $\{e_i^r\}_{i=1}^3$. For edge $e_i^l$, denote the midpoint, the unit outward normal vector and the perpendicular height by $\bold{m}_i^l$, $\bold{n}^l_i$ and $d^l_i$, respectively. And denote those of edge $e_i^r$ by $\bold{m}_i^r$, $\bold{n}^r_i$ and $d^r_i$, respectively. Let $\phi_i^l=\frac{1}{d_i^l}(\bold{x}-\bold{p}_i^l)$ and $\phi_i^r=\frac{1}{d_i^r}(\bold{x}-\bold{p}_i^r),\ 1\leq i\leq 3$, which  are the basis functions of the RT element on elements $K_{\bold{p}}^l$ and $K_{\bold{p}}^r$, respectively.

Since $\bold{\sigma}$ is linear on the patch $\omega_{\bold{p}}$, $
\bold{\sigma}(\bold{x})= \bold{\sigma}(\bold{p}) + \nabla \bold{\sigma}\cdot (\bold{x} - \bold{p}).
$ Thus,
$$
\bold{\sigma}(\bold{x})-\Pi_h^{\rm RT}\bold{\sigma}(\bold{x}) = (I - \Pi^{\rm RT}_h)\big (\nabla \bold{\sigma}\cdot(\bold{x}-\bold{p})\big ).
$$
The fact that
$$
\int_{K_{\bold{p}}^l } (I - \Pi^{\rm RT}_h)\big (\nabla \bold{\sigma}\cdot( \bold{M}_{K_{\bold{p}}^l }-\bold{p})\big )\dx =0\ \text{ and } \int_{K_{\bold{p}}^l } \nabla \bold{\sigma}\cdot(\bold{x}- \bold{M}_{K_{\bold{p}}^l })\dx =0
$$
leads to
\be\label{onlyPiRT}
\int_{K_{\bold{p}}^l} \bold{\sigma}(\bold{x})-\Pi_h^{\rm RT}\bold{\sigma}(\bold{x})\dx = -\int_{K_{\bold{p}}^l} \Pi^{\rm RT}_h\big (\nabla \bold{\sigma}\cdot(\bold{x}-\bold{M}_{K_{\bold{p}}^l })\big )\dx.
\ee
Note that $\nabla\sigma|_{K_{\bold{p}}^l }=\nabla \sigma|_{K_{\bold{p}}^r}$, $\bold{n}^l_i=\bold{n}^r_i$ and $\bold{m}_i^l-\bold{M}_{K_{\bold{p}}^l}=\bold{m}_i^r-\bold{M}_{K_{\bold{p}}^r }$. Thus
$$
(\nabla \bold{\sigma}\cdot(\bold{m}_i^l-\bold{M}_{K_{\bold{p}}^l }))^T\bold{n}_i^l=(\nabla \bold{\sigma}\cdot(\bold{m}_i^r-\bold{M}_{K_{\bold{p}}^r }))^T\bold{n}_i^r.
$$
Since $\int_{K_{\bold{p}}^l} \phi_i^l\dx = \int_{K_{\bold{p}}^r} \phi_i^r\dx$, this and \eqref{onlyPiRT} lead to
$$
\int_{K_{\bold{p}}^l}(\bold{\sigma}-\Pi^{\rm RT}_h\bold{\sigma})\dx= \int_{K_{\bold{p}}^r}(\bold{\sigma}-\Pi^{\rm RT}_h\bold{\sigma})\dx,
$$
which completes the proof.
\end{proof}

By employing the discrete Helmholtz decomposition, the estimate of the term $ \bold{I}_{11}$ in \cite{Brandts1994Superconvergence} is improved in the following lemma.
\begin{Lm}
Suppose that $(\bold{\sigma}^{\rm (f,\ S)},u^{\rm (f,\ S)})$ is the solution of \eqref{weak} with $\sigma^{\rm (f,\ S)}\in H^{\frac{5}{2}}(\Om,\mathbb{R}^2)$, $(\bold{\sigma}_h^{\rm (RT,\ f)},u_h^{\rm (RT,\ f)})$ is the solution of \eqref{weakdis} on an uniform triangulation $\cT_h$. It holds that
\begin{equation*}\label{2termfinal}
|\bold{I}_{11}|\lesssim h^2 (| \bold{\sigma}^{\rm (f,\ S)}|_{\frac{5}{2},\Om}+\kappa |\ln h|^{1/2}|\bold{\sigma}^{\rm (f,\ S)}|_{1,\infty,\Om})\parallel \bold{\sigma}_h^{\rm (RT,\ f)}-\Pi^{\rm RT}_h\bold{\sigma}^{\rm (f,\ S)}\parallel_{0,\Om}.
\end{equation*}
\end{Lm}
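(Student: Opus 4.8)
The plan is to start from the splitting \eqref{totalterm} and dispose of $\bold{I}_{11}^1$ at once: the estimate \eqref{1termfinal} already gives $|\bold{I}_{11}^1|\lesssim h^2|\bold{\sigma}^{\rm (f,\ S)}|_{2,\Om}\|\bold{\sigma}_h^{\rm (RT,\ f)}-\Pi^{\rm RT}_h\bold{\sigma}^{\rm (f,\ S)}\|_{0,\Om}$, and since $|\bold{\sigma}^{\rm (f,\ S)}|_{2,\Om}\lesssim|\bold{\sigma}^{\rm (f,\ S)}|_{\frac52,\Om}$ this is already of the claimed form. All the work therefore goes into the boundary sum $\bold{I}_{11}^2$ of \eqref{I2original}. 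Since $\bold{\sigma}_h^{\rm (RT,\ f)}-\Pi^{\rm RT}_h\bold{\sigma}^{\rm (f,\ S)}$ is divergence free, Lemma \ref{lemma2} lets me write it as $\bold{curl}\,\rho_h$ for a single continuous piecewise linear $\rho_h\in\cP_1$ with $\rho_h|_{\partial\Om}=0$ and $\|\nabla\rho_h\|_{0,\Om}=\|\bold{\sigma}_h^{\rm (RT,\ f)}-\Pi^{\rm RT}_h\bold{\sigma}^{\rm (f,\ S)}\|_{0,\Om}$. The point of this substitution is that the factor $\bold{e}_1^TF(\bold{\sigma}_h^{\rm (RT,\ f)}-\Pi^{\rm RT}_h\bold{\sigma}^{\rm (f,\ S)})$, being the normal component of a curl across $e_{\bold{f}_1}$, equals (up to sign and the fixed factor $\bold{e}_1^TF^{-T}\bold{e}_1$) the tangential derivative of $\rho_h$ along $\bold{t}_{\bold{f}_1}$, i.e. the nodal difference $(\rho_h(\bold{q})-\rho_h(\bold{q}'))/h_{\bold{f}_1}$ at the two endpoints of that edge. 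This is what turns $\bold{I}_{11}^2$ into a telescoping sum.

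Next I would organize the boundary triangles $K_{\bold{f}_1}$ into chains along each straight part of $\partial\Om$: on such a part they are $K_1,\dots,K_m$ with $K_{j+1}$ the translate of $K_j$ by $h_{\bold{f}_1}\bold{t}_{\bold{f}_1}$, and the endpoints $\bold{q}_0,\dots,\bold{q}_m$ of their edges $e_{\bold{f}_1}$ form a path whose interior vertices lie in $\cP_b^2$ and whose two ends sit at the corners in $\cP_b^1$. Writing $v_K:=\int_K(\bold{\sigma}^{\rm (f,\ S)}-\Pi^{\rm RT}_h\bold{\sigma}^{\rm (f,\ S)})^T\bold{e}_1\dx$ and summing by parts,
$$\sum_{j=1}^m\frac{\rho_h(\bold{q}_j)-\rho_h(\bold{q}_{j-1})}{h_{\bold{f}_1}}\,v_{K_j}=\frac{1}{h_{\bold{f}_1}}\Big(\rho_h(\bold{q}_m)v_{K_m}-\rho_h(\bold{q}_0)v_{K_1}-\sum_{j=1}^{m-1}\rho_h(\bold{q}_j)(v_{K_{j+1}}-v_{K_j})\Big).$$
For each interior vertex $\bold{q}_j\in\cP_b^2$ the pair $(K_j,K_{j+1})=(K^l_{\bold{q}_j},K^r_{\bold{q}_j})$ is exactly the translated pair of Lemma \ref{linearsigma}, so for any affine $L$ on $\omega_{\bold{q}_j}$ the affine parts of $v_{K_{j+1}}$ and $v_{K_j}$ cancel and $v_{K_{j+1}}-v_{K_j}$ involves only the remainder $\bold{\sigma}^{\rm (f,\ S)}-L$. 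This is the cancellation advertised in the introduction and is the crux of the argument.

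For the interior sum I would take $L$ to be a local affine approximation of $\bold{\sigma}^{\rm (f,\ S)}$ on $\omega_{\bold{q}_j}$, so that by \eqref{fortin} and Bramble--Hilbert $|\int_{K}(I-\Pi^{\rm RT}_h)(\bold{\sigma}^{\rm (f,\ S)}-L)^T\bold{e}_1\dx|\lesssim h\|(I-\Pi^{\rm RT}_h)(\bold{\sigma}^{\rm (f,\ S)}-L)\|_{0,K}\lesssim h^2|\bold{\sigma}^{\rm (f,\ S)}-L|_{1,\omega_{\bold{q}_j}}\lesssim h^3|\bold{\sigma}^{\rm (f,\ S)}|_{2,\omega_{\bold{q}_j}}$. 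A Cauchy--Schwarz step then bounds the interior sum by $h^2(\sum_j\rho_h(\bold{q}_j)^2)^{1/2}(\sum_j|\bold{\sigma}^{\rm (f,\ S)}|_{2,\omega_{\bold{q}_j}}^2)^{1/2}$, and two applications of Lemma \ref{bdtodomain} with $s=\frac12$ close it: an inverse inequality gives $(\sum_j\rho_h(\bold{q}_j)^2)^{1/2}\lesssim h^{-1}\|\rho_h\|_{0,\partial_h\Om}\lesssim h^{-1/2}\|\nabla\rho_h\|_{0,\Om}$, while applying the lemma to $\nabla^2\bold{\sigma}^{\rm (f,\ S)}\in H^{1/2}$ over the strip gives $(\sum_j|\bold{\sigma}^{\rm (f,\ S)}|_{2,\omega_{\bold{q}_j}}^2)^{1/2}\lesssim\|\nabla^2\bold{\sigma}^{\rm (f,\ S)}\|_{0,\partial_h\Om}\lesssim h^{1/2}|\bold{\sigma}^{\rm (f,\ S)}|_{\frac52,\Om}$. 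Multiplying, the interior sum is $\lesssim h^2|\bold{\sigma}^{\rm (f,\ S)}|_{\frac52,\Om}\|\bold{\sigma}_h^{\rm (RT,\ f)}-\Pi^{\rm RT}_h\bold{\sigma}^{\rm (f,\ S)}\|_{0,\Om}$, which is exactly where the fractional index $\frac52$ and the full order $h^2$ enter.

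Finally, only the $O(\kappa)$ end terms $h_{\bold{f}_1}^{-1}\rho_h(\bold{q}_{\rm end})v_{K_{\rm end}}$ at the corners $\cP_b^1$ remain. For these I would estimate $v_{K_{\rm end}}$ in $L^\infty$, $|v_{K_{\rm end}}|\lesssim h^2\|\bold{\sigma}^{\rm (f,\ S)}-\Pi^{\rm RT}_h\bold{\sigma}^{\rm (f,\ S)}\|_{0,\infty,K}\lesssim h^3|\bold{\sigma}^{\rm (f,\ S)}|_{1,\infty,\Om}$, and bound the nodal value by the discrete Sobolev inequality $\|\rho_h\|_{0,\infty,\Om}\lesssim|\ln h|^{1/2}\|\nabla\rho_h\|_{0,\Om}$ for piecewise linear functions in two dimensions; summing the $\kappa$ of them yields the $\kappa|\ln h|^{1/2}|\bold{\sigma}^{\rm (f,\ S)}|_{1,\infty,\Om}$ contribution. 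Collecting $\bold{I}_{11}^1$, the interior sum, and the corner terms gives the claim. I expect the genuine difficulty to be twofold: first, the bookkeeping that exposes the telescoping structure and matches it cleanly to the vertex-sharing pairs of Lemma \ref{linearsigma}, including the translate relation and the behavior at corners; and second, the sharp use of Lemma \ref{bdtodomain} on $\nabla^2\bold{\sigma}^{\rm (f,\ S)}$, which is precisely the step that upgrades the old half-order estimate \eqref{2termI11} to a full order and which forces the regularity $\bold{\sigma}^{\rm (f,\ S)}\in H^{\frac52}$.
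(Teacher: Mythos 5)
Your proposal follows essentially the same route as the paper's own proof: the discrete Helmholtz decomposition of Lemma \ref{lemma2}, the identity expressing $\bold{e}_1^TF\,\bold{curl}\,w_h$ as a nodal difference quotient along $e_{\bold{f}_1}$, summation by parts regrouping the boundary sum by vertices so that Lemma \ref{linearsigma} plus Bramble--Hilbert yields the $h^3|\bold{\sigma}^{\rm (f,\ S)}|_{2,\omega_{\bold{p}}}$ cancellation at each $\bold{p}\in\cP_b^2$, two applications of Lemma \ref{bdtodomain} with $s=\tfrac12$ for the interior sum, and the discrete Sobolev inequality $\parallel w_h\parallel_{0,\infty,h}\lesssim|\ln h|^{1/2}\parallel w_h\parallel_{1,h}$ for the $\kappa$ corner terms. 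Your explicit telescoping along translated chains is just the paper's vertex-wise regrouping in \eqref{2term3} made concrete, so the argument matches step for step.
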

\begin{proof}
By Lemma \ref{lemma2}, there exists $w_h\in \cP_1$ such that
$$\bold{\sigma}_h^{\rm (RT,\ f)}-\Pi^{\rm RT}_h\bold{\sigma}^{\rm (f,\ S)}=\bold{curl} w_h \in \big(U_{\rm RT}\big)^2.$$
The term $\bold{I}_{11}^2$ in \eqref{I2original} reads
\be\label{2term1}
\bold{I}_{11}^2=(\bold{e}_1^TF^{-T}\bold{e}_1)\sum_{K_{\bold{f}_1}} \bold{e}_1^TF\bold{curl} w_h\int_{K_{\bold{f}_1}}(\bold{\sigma}^{\rm (f,\ S)}-\Pi^{\rm RT}_h\bold{\sigma}^{\rm (f,\ S)})^T\bold{e}_1\dx.
\ee
Since
\begin{equation}\label{2term2}
\bold{e}_1^TF\bold{curl}w_h=\frac{1}{h_{\bold{f}_1}}\int_{e_{\bold{f}_1}}\nabla w_h\cdot \bold{t}_{\bold{f}_1}\ds=\frac{w_h(\bold{p}_{\bold{f}_1}^2)-w_h(\bold{p}_{\bold{f}_1}^1)}{h_{\bold{f}_1}},
\end{equation}
a substitution of \eqref{2term2} into \eqref{2term1} leads to
\be\label{2term3}
\begin{split}
|\bold{I}_{11}^2|\lesssim &\big|\sum_{\bold{p}\in\cP_b^2} \frac{w_h(\bold{p})}{h_{\bold{f}_1}}\big(\int_{K_{\bold{p}}^l}(\bold{\sigma}^{\rm (f,\ S)}-\Pi^{\rm RT}_h\bold{\sigma}^{\rm (f,\ S)})^T\bold{e}_1\dx- \int_{K_{\bold{p}}^r}(\bold{\sigma}^{\rm (f,\ S)}-\Pi^{\rm RT}_h\bold{\sigma}^{\rm (f,\ S)})^T\bold{e}_1\dx\big)\big|\\
& + \sum_{\bold{p}\in\cP_b^1} \big| \frac{w_h(\bold{p})}{h_{\bold{f}_1}}\int_{K_{\bold{p}}}(\bold{\sigma}^{\rm (f,\ S)}-\Pi^{\rm RT}_h\bold{\sigma}^{\rm (f,\ S)})^T\bold{e}_1\dx\big|.
\end{split}
\ee
By Lemma \ref{linearsigma} and the Bramble-Hilbert lemma,
\be\label{2term4}
\begin{split}
\big |\int_{K_{\bold{p}}^l}(\bold{\sigma}^{\rm (f,\ S)}-\Pi^{\rm RT}_h\bold{\sigma}^{\rm (f,\ S)})^T\bold{e}_1\dx- \int_{K_{\bold{p}}^r}(\bold{\sigma}^{\rm (f,\ S)}-\Pi^{\rm RT}_h\bold{\sigma}^{\rm (f,\ S)})^T\bold{e}_1\dx\big |\lesssim h^3|\bold{\sigma}^{\rm (f,\ S)}|_{2,\omega_{\bold{p}}}.
\end{split}
\ee
A substitution of \eqref{2term4} and the Cauchy-Schwarz inequality into \eqref{2term3} yields
\begin{equation*}
\begin{split}
|\bold{I}_{11}^2|\lesssim &h\big(\sum_{\bold{p}\in\cP_b^2} \parallel w_h\parallel_{0,\omega_{\bold{p}}}^2\big)^{1/2}\big(\sum_{\bold{p}\in\cP_b^2}|\bold{\sigma}^{\rm (f,\ S)}|_{2,\omega_{\bold{p}}}^2\big)^{1/2}\\
 &+ h^2\big(\sum_{\bold{p}\in\cP_b^1} \parallel w_h\parallel_{0,\infty,K_{\bold{p}}}^2\big)^{1/2}\big(\sum_{\bold{p}\in\cP_b^1}|\bold{\sigma}^{\rm (f,\ S)}|_{1,\infty,K_{\bold{p}}}^2\big)^{1/2} \\
\lesssim &h \parallel w_h\parallel_{0,\partial_h \Om} | \bold{\sigma}^{\rm (f,\ S)}|_{2,\partial_h\Om}+\kappa h^2 |\bold{\sigma}^{\rm (f,\ S)}|_{1,\infty,\Om} \parallel w_h\parallel_{0,\infty,h}.
\end{split}
\end{equation*}
Lemma \ref{bdtodomain} implies that
\begin{equation*}
h \parallel w_h\parallel_{0,\partial_h \Om} | \bold{\sigma}^{\rm (f,\ S)}|_{2,\partial_h\Om}\lesssim h^2 \parallel w_h\parallel_{\frac{1}{2},\Om} | \bold{\sigma}^{\rm (f,\ S)}|_{\frac{5}{2},\Om}\lesssim h^2 \parallel w_h\parallel_{1,h} | \bold{\sigma}^{\rm (f,\ S)}|_{\frac{5}{2},\Om}.
\end{equation*}
Since $ \parallel w_h\parallel_{0,\infty,h}\lesssim |\ln h|^{1/2}\parallel w_h\parallel_{1,h}$,
\be\label{2termfinal}
|\bold{I}_{11}^2|\lesssim (h^2 | \bold{\sigma}^{\rm (f,\ S)}|_{\frac{5}{2},\Om}+\kappa h^2|\ln h|^{1/2}|\bold{\sigma}^{\rm (f,\ S)}|_{1,\infty,\Om}) \parallel w_h\parallel_{1,h}.
\ee
A substitution of \eqref{1termfinal} and \eqref{2termfinal} into \eqref{totalterm} concludes
$$
|\bold{I}_{11}|\lesssim (h^2 | \bold{\sigma}^{\rm (f,\ S)}|_{\frac{5}{2},\Om}+\kappa h^2|\ln h|^{1/2}|\bold{\sigma}^{\rm (f,\ S)}|_{1,\infty,\Om})\parallel \bold{\sigma}_h^{\rm (RT,\ f)}-\Pi^{\rm RT}_h\bold{\sigma}^{\rm (f,\ S)}\parallel_{0,\Om},
$$
which completes the proof.
\end{proof}

Similar arguments for the sums $ \bold{I}_{12}$, $ \bold{I}_{21}$ and $ \bold{I}_{22}$ prove a full one order superconvergence for the RT element as follows.
\begin{Th}\label{Lm:bdRT}
Suppose that $(\bold{\sigma}^{\rm (f,\ S)},u^{\rm (f,\ S)})$ is the solution of \eqref{weak} with $\sigma^{\rm (f,\ S)}\in H^{\frac{5}{2}}(\Om,\mathbb{R}^2)$, and $(\bold{\sigma}_h^{\rm (RT,\ f)},u_h^{\rm (RT,\ f)})$ is the solution of \eqref{weakdis} on an uniform triangulation $\cT_h$. It holds that
$$
\parallel \bold{\sigma}_h^{\rm (RT,\ f)}-\Pi^{\rm RT}_h\bold{\sigma}^{\rm (f,\ S)}\parallel_{0,\Om}\lesssim h^2 \big (| \bold{\sigma}^{\rm (f,\ S)}|_{\frac{5}{2},\Om}+\kappa |\ln h|^{1/2}|\bold{\sigma}^{\rm (f,\ S)}|_{1,\infty,\Om}\big ).
$$
\end{Th}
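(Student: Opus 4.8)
The plan is to reduce everything to the squared-norm decomposition $\|\bold{\sigma}_h^{\rm (RT,\ f)}-\Pi^{\rm RT}_h\bold{\sigma}^{\rm (f,\ S)}\|_{0,\Om}^2=\sum_{i,j=1}^2\bold{I}_{ij}$ established above, and to show that each of the four sums obeys the estimate already proved for $\bold{I}_{11}$ in the preceding lemma, namely $|\bold{I}_{ij}|\lesssim h^2\big(|\bold{\sigma}^{\rm (f,\ S)}|_{\frac{5}{2},\Om}+\kappa|\ln h|^{1/2}|\bold{\sigma}^{\rm (f,\ S)}|_{1,\infty,\Om}\big)\|\bold{\sigma}_h^{\rm (RT,\ f)}-\Pi^{\rm RT}_h\bold{\sigma}^{\rm (f,\ S)}\|_{0,\Om}$. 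Summing the four estimates and cancelling one power of $\|\bold{\sigma}_h^{\rm (RT,\ f)}-\Pi^{\rm RT}_h\bold{\sigma}^{\rm (f,\ S)}\|_{0,\Om}$ (the case where this quantity vanishes being trivial) then yields the theorem.

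First I would treat $\bold{I}_{12}$, whose first factor is still the $\bold{f}_1$-component $\bold{e}_1^TF(\bold{\sigma}_h^{\rm (RT,\ f)}-\Pi^{\rm RT}_h\bold{\sigma}^{\rm (f,\ S)})$. Because this is the normal trace of an $H(\text{div})$-conforming RT field across edges with normal $\bold{f}_1$, it remains constant on each parallelogram $N_{\bold{f}_1}$, so the identical split \eqref{totalterm} into an interior parallelogram sum and a boundary-triangle sum over $K_{\bold{f}_1}$ applies. The only change from $\bold{I}_{11}$ is that the second factor is now $(\bold{\sigma}^{\rm (f,\ S)}-\Pi^{\rm RT}_h\bold{\sigma}^{\rm (f,\ S)})^T\bold{e}_2$ rather than the $\bold{e}_1$-component. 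This is harmless: the interior estimate \eqref{1termfinal} uses only piecewise-constancy of the first factor together with a Bramble--Hilbert bound on $\bold{\sigma}^{\rm (f,\ S)}-\Pi^{\rm RT}_h\bold{\sigma}^{\rm (f,\ S)}$, both indifferent to the choice of $\bold{e}_j$; and in the boundary sum the crucial cancellation of Lemma \ref{linearsigma} is the vector identity $\int_{K_{\bold{p}}^l}(\bold{\sigma}-\Pi^{\rm RT}_h\bold{\sigma})\dx=\int_{K_{\bold{p}}^r}(\bold{\sigma}-\Pi^{\rm RT}_h\bold{\sigma})\dx$, which survives contraction with any fixed $\bold{e}_j$. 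Hence the discrete Helmholtz representation $\bold{\sigma}_h^{\rm (RT,\ f)}-\Pi^{\rm RT}_h\bold{\sigma}^{\rm (f,\ S)}=\bold{curl}\,w_h$ from Lemma \ref{lemma2}, the telescoping over vertices $\bold{p}\in\cP_b^2$, and the logarithmic bound $\|w_h\|_{0,\infty,h}\lesssim|\ln h|^{1/2}\|w_h\|_{1,h}$ reproduce the estimate \eqref{2termfinal} verbatim, giving the claimed bound for $\bold{I}_{12}$.

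Next I would handle $\bold{I}_{21}$ and $\bold{I}_{22}$, where the first factor is instead the $\bold{f}_2$-component $\bold{e}_2^TF(\bold{\sigma}_h^{\rm (RT,\ f)}-\Pi^{\rm RT}_h\bold{\sigma}^{\rm (f,\ S)})=(\bold{\sigma}_h^{\rm (RT,\ f)}-\Pi^{\rm RT}_h\bold{\sigma}^{\rm (f,\ S)})^T\bold{f}_2$. The argument is the same after replacing the partition direction: one now decomposes $\Om$ into parallelograms $N_{\bold{f}_2}$ and the associated boundary triangles $K_{\bold{f}_2}$, uses normal continuity across edges with normal $\bold{f}_2$ to make the first factor piecewise constant, and in the Helmholtz step uses $\bold{e}_2^TF\bold{curl}\,w_h=\big(w_h(\bold{p}_{\bold{f}_2}^2)-w_h(\bold{p}_{\bold{f}_2}^1)\big)/h_{\bold{f}_2}$ in place of \eqref{2term2}. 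Lemma \ref{linearsigma} is stated for both $i=1,2$, so its cancellation applies equally to the $K_{\bold{f}_2}$ boundary pairs; the remaining estimates go through unchanged, and the same upper bound results for both $\bold{I}_{21}$ and $\bold{I}_{22}$.

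The main obstacle is bookkeeping rather than conceptual: I must verify that nothing in the $\bold{I}_{11}$ argument secretly exploited the alignment of the two indices, i.e. that the recovered field was not essentially tested against its own normal component. The checks above show it was not — piecewise-constancy of the first factor comes purely from $H(\text{div})$ normal continuity relative to the chosen partition direction, while the second factor enters only through the vector-valued cancellation of Lemma \ref{linearsigma} and Bramble--Hilbert estimates, both insensitive to $\bold{e}_j$. Once this independence is confirmed for all four pairs $(i,j)$, summing yields $\|\bold{\sigma}_h^{\rm (RT,\ f)}-\Pi^{\rm RT}_h\bold{\sigma}^{\rm (f,\ S)}\|_{0,\Om}^2\lesssim h^2\big(|\bold{\sigma}^{\rm (f,\ S)}|_{\frac{5}{2},\Om}+\kappa|\ln h|^{1/2}|\bold{\sigma}^{\rm (f,\ S)}|_{1,\infty,\Om}\big)\|\bold{\sigma}_h^{\rm (RT,\ f)}-\Pi^{\rm RT}_h\bold{\sigma}^{\rm (f,\ S)}\|_{0,\Om}$, and dividing through completes the proof.
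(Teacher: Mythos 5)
Your proposal is correct and takes essentially the same route as the paper: the paper proves the bound for $\bold{I}_{11}$ in the preceding lemma and dismisses $\bold{I}_{12}$, $\bold{I}_{21}$, $\bold{I}_{22}$ with exactly the ``similar arguments'' you spell out (the same $N_{\bold{f}_i}$/$K_{\bold{f}_i}$ splitting determined by the partition direction, the discrete Helmholtz telescoping with the difference-quotient formula for $\bold{e}_i^TF\bold{curl}\,w_h$, and the vector-valued cancellation of Lemma \ref{linearsigma}, which holds for $i=1,2$), before summing and cancelling one factor of $\parallel \bold{\sigma}_h^{\rm (RT,\ f)}-\Pi^{\rm RT}_h\bold{\sigma}^{\rm (f,\ S)}\parallel_{0,\Om}$. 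Your bookkeeping check --- that the first factor depends only on $\bold{f}_i$ via normal continuity while the second factor enters only through estimates insensitive to the choice of $\bold{e}_j$ --- is precisely the verification the paper leaves implicit.
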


\subsection{Superconvergence of the CR element and the ECR element}
A full one order superconvergence of the CR element and the ECR element follows from a special relation between the RT element and the CR element, and the equivalence between the RT element and the ECR element, respectively.

A post-processing mechanism is analyzed in \cite{Hu2016Superconvergence} for the superconvergence of the CR element. Given $\textbf{q}\in \rm RT(\cT_h)$, define function $K_h \textbf{q}\in V_h^{\rm CR }\times V^{\rm CR}_h$ as follows.
\begin{Def}\label{Def:R}
1.For each interior edge $e\in\cE_h^i$, the elements $K_e^1$ and $K_e^2$ are the pair of elements sharing $e$. Then the value of $K_h \textbf{q}$ at the midpoint $\textbf{m}_e$ of $e$ is
$$
K_h \textbf{q}(\textbf{m}_e)=\frac{1}{2}\big(\textbf{q}|_{K_e^1}(\textbf{m}_e)+\textbf{q}|_{K_e^2}(\textbf{m}_e)\big).
$$

2.For each boundary edge $e\in\cE_h^b$, let $K$ be the element having $e$ as an edge, and $K'$ be an element sharing an edge $e'\in\cE_h^i$ with $K$. Let $e''$ denote the edge of $K'$ that does not intersect with $e$, and $\textbf{m}$, $\textbf{m}'$ and $\textbf{m}''$ be the midpoints of the edges $e$, $e'$ and $e''$, respectively. Then the value of $K_h \textbf{q}$ at the point $\textbf{m}$ is
$$
K_h \textbf{q}(\textbf{m})=2K_h \textbf{q}(\textbf{m}')-K_h \textbf{q}(\textbf{m}'').
$$
\begin{center}
\begin{tikzpicture}[xscale=2.5,yscale=2.5]
\draw[-] (-0.5,0) -- (2,0);
\draw[-] (0,0) -- (0.5,1);
\draw[-] (0.5,1) -- (2,1);
\draw[-] (0.5,1) -- (1.5,0);
\draw[-] (2,1) -- (1.5,0);
\node[below, right] at (1,0.5) {\textbf{m}'};
\node[above] at (1.25,1) {\textbf{m}''};
\node[below] at (0.75,0) {\textbf{m}};
\node at (0.7,0.4) {K};
\node at (1.4,0.75) {K'};
\node at (1,0) {e};
\node at (1.4,0.2) {e'};
\node at (1.7,1) {e''};
\node at (0.3,-0.1) {$\partial \Om $};
\fill(1,0.5) circle(0.5pt);
\fill(1.25,1) circle(0.5pt);
\fill(0.75,0) circle(0.5pt);
\end{tikzpicture}
\end{center}
\end{Def}
The Poisson problem is to find $u^{\rm (f,\ S)}\in H^1_0(\Om,\mathbb{R})$ such that
\be\label{Poisson}
(\nabla u^{\rm (f,\ S)},\nabla v) = (f,v)\quad \text{ for any } v\in H^1_0(\Om,\mathbb{R}),
\ee
where $f\in L^2(\Om,\mathbb{R})$. The CR element method of \eqref{Poisson} seeks $u_h^{\rm (CR,\ f)}\in V^{\rm CR}_h$ such that
\be\label{Poisson:disCR}
(\nabla_h u_h^{\rm (CR,\ f)},\nabla_h v_h) = (f,v_h)\quad \text{ for any } v_h\in V^{\rm CR}_h.
\ee
The ECR element method of \eqref{Poisson} seeks $u_h^{\rm (ECR,\ f)}\in V^{\rm ECR}_h$ such that
\begin{equation*}\label{Poisson:disECR}
(\nabla_h u_h^{\rm (ECR,\ f)},\nabla_h v_h) = (f,v_h)\quad \text{ for any } v_h\in V^{\rm ECR}_h.
\end{equation*}
Due to the superconvergence result of the RT element in Theorem \ref{Lm:bdRT} and the special relation between the RT element and the CR element \cite{Marini1985An}, the superconvergence of the CR element for \eqref{Poisson} can be improved from a half order to a full one order following the analysis in \cite{Hu2016Superconvergence}.

\begin{Th}\label{superbd}
Suppose that $u^{\rm (f,\ S)}\in H^{\frac{7}{2}}(\Om,\mathbb{R})\cap H^1_0(\Om,\mathbb{R})$ is the solution of \eqref{Poisson}, $u_h^{\rm (CR,\ f)}$ is the solution of \eqref{Poisson:disCR} by the CR element on an uniform triangulation $\cT_h$, and $f\in W^{1,\infty}(\Om,\mathbb{R})$. It holds that
$$
\parallel \nabla u^{\rm (f,\ S)}-K_h \nabla_h u_h^{\rm (CR,\ f)}\parallel_{0,\Om}\lesssim h^2(|u^{\rm (f,\ S)}|_{\frac{7}{2},\Om}+\kappa |\ln h|^{1/2}|u^{\rm (f,\ S)}|_{2,\infty,\Om}+|f|_{1,\infty,\Om}).
$$
\end{Th}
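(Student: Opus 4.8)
The plan is to reduce the claimed estimate to the full one order superconvergence of the RT element in Theorem \ref{Lm:bdRT}, following the framework of \cite{Hu2016Superconvergence}. The point is that the argument in \cite{Hu2016Superconvergence} loses half an order only through the term $\|\bold{\sigma}_h^{\rm (RT,\ f)}-\Pi^{\rm RT}_h\bold{\sigma}^{\rm (f,\ S)}\|_{0,\Om}$; once this quantity is controlled by $h^2$, the whole chain upgrades to a full order. Throughout I use that the mixed solution of \eqref{weak} satisfies $\bold{\sigma}^{\rm (f,\ S)}=-\nabla u^{\rm (f,\ S)}$, so that $|\bold{\sigma}^{\rm (f,\ S)}|_{\frac52,\Om}=|u^{\rm (f,\ S)}|_{\frac72,\Om}$ and $|\bold{\sigma}^{\rm (f,\ S)}|_{1,\infty,\Om}=|u^{\rm (f,\ S)}|_{2,\infty,\Om}$, which is exactly what appears on the right-hand side of the claim.

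First I would invoke the Marini relation \cite{Marini1985An}: on every $K\in\cT_h$, the RT flux of \eqref{weakdis} with datum $f$ and the CR gradient of \eqref{Poisson:disCR} are linked by $\nabla u_h^{\rm (CR,\ f)}|_K=-\bold{\sigma}_h^{\rm (RT,\ f)}(\bold{x})|_K+\tfrac12(\ov{f}|_K)(\bold{x}-\bold{M}_K)$ for all $\bold{x}\in K$, where $\ov{f}|_K$ is the elementwise mean of $f$ and $\bold{M}_K$ is the centroid; the two $\bold{x}$-dependent pieces cancel since $\nabla u_h^{\rm (CR,\ f)}|_K$ is constant, and $\mathrm{div}\,\bold{\sigma}_h^{\rm (RT,\ f)}=\ov{f}$ fixes the factor $\tfrac12$. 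Applying the linear operator $K_h$ and using $\nabla u^{\rm (f,\ S)}=-\bold{\sigma}^{\rm (f,\ S)}$, I would split
\[
\nabla u^{\rm (f,\ S)}-K_h\nabla_h u_h^{\rm (CR,\ f)}=-\big(\bold{\sigma}^{\rm (f,\ S)}-K_h\Pi^{\rm RT}_h\bold{\sigma}^{\rm (f,\ S)}\big)-K_h\big(\Pi^{\rm RT}_h\bold{\sigma}^{\rm (f,\ S)}-\bold{\sigma}_h^{\rm (RT,\ f)}\big)-R_h,
\]
where $R_h:=K_h\big[\tfrac12(\ov{f}|_K)(\bold{x}-\bold{M}_K)\big]$ is the recovery of the Marini remainder.

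The three terms are estimated separately. For the middle term, the $L^2$ stability of $K_h$ from \cite{Hu2016Superconvergence} reduces it to $\|\Pi^{\rm RT}_h\bold{\sigma}^{\rm (f,\ S)}-\bold{\sigma}_h^{\rm (RT,\ f)}\|_{0,\Om}$, which Theorem \ref{Lm:bdRT} bounds by $h^2(|u^{\rm (f,\ S)}|_{\frac72,\Om}+\kappa|\ln h|^{1/2}|u^{\rm (f,\ S)}|_{2,\infty,\Om})$. The first term is the consistency error of $K_h$ on the Fortin interpolant of a smooth field; on a uniform mesh $K_h\Pi^{\rm RT}_h$ reproduces affine vector fields on each parallelogram patch, so a Bramble-Hilbert argument gives $\|\bold{\sigma}^{\rm (f,\ S)}-K_h\Pi^{\rm RT}_h\bold{\sigma}^{\rm (f,\ S)}\|_{0,\Om}\lesssim h^2|\bold{\sigma}^{\rm (f,\ S)}|_{2,\Om}$, a term absorbed into the right-hand side. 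The crucial point is $R_h$: at an interior midpoint $\bold{m}_e$ with $e=K_e^1\cap K_e^2$, the parallelogram structure gives $\bold{m}_e-\bold{M}_{K_e^1}=-(\bold{m}_e-\bold{M}_{K_e^2})$ (each equals $\pm\tfrac16$ times the difference of the two apex vertices), so the averaging in $K_h$ collapses the remainder to $R_h(\bold{m}_e)=\tfrac14(\bold{m}_e-\bold{M}_{K_e^1})(\ov{f}|_{K_e^1}-\ov{f}|_{K_e^2})$. Since $|\bold{m}_e-\bold{M}_{K_e^1}|\lesssim h$ and $|\ov{f}|_{K_e^1}-\ov{f}|_{K_e^2}|\lesssim h|f|_{1,\infty,\Om}$ for $f\in W^{1,\infty}(\Om,\mathbb{R})$, summing over edges yields $\|R_h\|_{0,\Om}\lesssim h^2|f|_{1,\infty,\Om}$, supplying the last term in the claim. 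Collecting the three bounds finishes the proof.

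The main obstacle is this last cancellation and its boundary counterpart. A naive estimate of $R_h$ would keep both summands $\tfrac14\ov{f}|_{K_e^i}(\bold{m}_e-\bold{M}_{K_e^i})$ separately, each of size $O(h)|f|_{\infty}$, and would therefore yield only $O(h)$; it is precisely the parallelogram identity together with the Lipschitz continuity of $f$ that upgrades this to $O(h^2)$. Near $\partial\Om$ this identity fails, and there $K_h$ is defined by the extrapolation rule $2K_h\bold{q}(\bold{m}')-K_h\bold{q}(\bold{m}'')$ of Definition \ref{Def:R}, which is not a convex combination; one must check that the consistency term and $R_h$ remain $O(h^2)$ over the boundary triangles $K_{\bold{f}_i}$ by feeding in the already-established interior bounds through the extrapolation (which is exact on affine data along the line), with any residual loss confined to the $\kappa|\ln h|^{1/2}$-type factors already present in Theorem \ref{Lm:bdRT}. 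This is exactly where the vertex bookkeeping over $\cP_b^1$ and $\cP_b^2$ enters, mirroring the boundary analysis carried out for the RT element.
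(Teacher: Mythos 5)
Your route is exactly the one the paper intends: the paper gives no detailed proof of Theorem \ref{superbd}, deriving it instead by citing Theorem \ref{Lm:bdRT}, the Marini relation \cite{Marini1985An}, and the recovery analysis of \cite{Hu2016Superconvergence}. Your three-term splitting fleshes that sketch out correctly: Brandts' superconvergence of $K_h\Pi_h^{\rm RT}$ on uniform meshes for the first term, $L^2$-boundedness of $K_h$ combined with Theorem \ref{Lm:bdRT} for the middle term (this is precisely where the paper's new full-order RT estimate replaces the old $h^{3/2}$ bound), and the parallelogram cancellation $\bold{m}_e-\bold{M}_{K_e^1}=-(\bold{m}_e-\bold{M}_{K_e^2})$ for the recovered Marini remainder, which is indeed the sole source of the $|f|_{1,\infty,\Om}$ term in the statement; your computation $\bold{m}_e-\bold{M}_{K_e^1}=\frac16(\bold{c}_2-\bold{c}_1)$ with $\bold{c}_1$, $\bold{c}_2$ the opposite apexes is also correct.

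One step, however, is not an identity as written: Marini's relation links $\bold{\sigma}_h^{\rm (RT,\ f)}$ to the CR solution of the problem with the \emph{projected} load $\Pi_h^0 f$, not to $u_h^{\rm (CR,\ f)}$ itself. (That $\text{div}\,\bold{\sigma}_h^{\rm (RT,\ f)}=\Pi_h^0 f$ already signals the projection, and the paper is careful about this point: its version \eqref{CRRT} involves $u_h^{\rm (CR,\ \lambda\Pi_h^0 u)}$, the solution of \eqref{CRbdPipro} with projected datum, together with the perturbation estimate of Lemma \ref{Lm:crdiff}.) So you must insert the intermediate solution $u_h^{\rm (CR,\ \Pi_h^0 f)}$ and control $\parallel \nabla_h(u_h^{\rm (CR,\ f)}-u_h^{\rm (CR,\ \Pi_h^0 f)})\parallel_{0,\Om}$. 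This is routine: for any $v_h\in V_h^{\rm CR}$ one has $\big(f-\Pi_h^0 f, v_h\big)=\big((I-\Pi_h^0)f,(I-\Pi_h^0)v_h\big)\lesssim h\,|f|_{1,\Om}\cdot h\,|v_h|_{1,h}$, whence the difference is $\lesssim h^2|f|_{1,\Om}\lesssim h^2|f|_{1,\infty,\Om}$, and the $L^2$-stability of $K_h$ absorbs it into your final bound. With this one repair (and granting the boundary-extrapolation bookkeeping you describe, where the rule $2K_h\bold{q}(\bold{m}')-K_h\bold{q}(\bold{m}'')$ merely propagates the interior $O(h^2)$ values with a fixed constant), your proof is complete and coincides with the paper's intended argument.
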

The superconvergence result of the CR element for the Poisson problem can also be extended to the corresponding eigenvalue problem.
\begin{Th}\label{supereig1}
Suppose that $(\lambda, u)$ is the eigenpair  of \eqref{variance}, $(\lambda^{\rm (CR, E)}_h, u^{\rm (CR, E)}_h)$ is the corresponding approximate eigenpair of  \eqref{discrete} by the CR element on an uniform triangulation $\cT_h$. Assume that $u\in H^{\frac{7}{2}}(\Om,\mathbb{R})\cap H^1_0(\Om,\mathbb{R})$, then,
$$
\parallel \nabla u-K_h \nabla_h  u^{\rm (CR, E)}_h\parallel_{0,\Om}\lesssim h^2|\ln h|^{1/2}|u|_{\frac{7}{2},\Om}.
$$
\end{Th}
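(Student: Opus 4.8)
The plan is to deduce the eigenvalue superconvergence from the source-problem estimate of Theorem \ref{superbd} by introducing an auxiliary Poisson problem whose load is the exact eigenfunction scaled by the eigenvalue. Since $(\lambda,u)$ solves \eqref{variance}, we have $a(u,v)=(\lambda u,v)$ for all $v\in V$, so $u$ is precisely the solution of \eqref{Poisson} with $f=\lambda u$. First I would let $u_h^{\rm (CR,\ f)}$ denote the CR-Galerkin solution \eqref{Poisson:disCR} of this auxiliary problem, i.e. $a_h(u_h^{\rm (CR,\ f)},v_h)=\lambda(u,v_h)$ for all $v_h\in V_h^{\rm CR}$. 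The hypothesis $u\in H^{\frac{7}{2}}(\Om,\mathbb{R})$ together with the two-dimensional Sobolev embedding $H^{\frac{7}{2}}(\Om,\mathbb{R})\hookrightarrow W^{2,\infty}(\Om,\mathbb{R})$ gives $f=\lambda u\in W^{1,\infty}(\Om,\mathbb{R})$ with $|u|_{2,\infty,\Om}+|f|_{1,\infty,\Om}\lesssim|u|_{\frac{7}{2},\Om}$, so Theorem \ref{superbd} applies verbatim and yields $\parallel\nabla u-K_h\nabla_h u_h^{\rm (CR,\ f)}\parallel_{0,\Om}\lesssim h^2|\ln h|^{1/2}|u|_{\frac{7}{2},\Om}$.

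It then remains to estimate the discrepancy $K_h\nabla_h(u_h^{\rm (CR,\ f)}-u_h^{\rm (CR, E)})$, after which the claim follows from the triangle inequality applied to the splitting $\nabla u-K_h\nabla_h u_h^{\rm (CR, E)}=\big(\nabla u-K_h\nabla_h u_h^{\rm (CR,\ f)}\big)+K_h\nabla_h\big(u_h^{\rm (CR,\ f)}-u_h^{\rm (CR, E)}\big)$. Subtracting the discrete eigenvalue equation \eqref{discrete} from the defining relation for $u_h^{\rm (CR,\ f)}$ gives, for every $v_h\in V_h^{\rm CR}$,
$$a_h(u_h^{\rm (CR,\ f)}-u_h^{\rm (CR, E)},v_h)=\lambda(u-u_h^{\rm (CR, E)},v_h)+(\lambda-\lambda_h^{\rm (CR, E)})(u_h^{\rm (CR, E)},v_h).$$
I would then insert the a priori estimates \eqref{CR:est} with $s=1$, namely $\parallel u-u_h^{\rm (CR, E)}\parallel_{0,\Om}\lesssim h^2|u|_{2,\Om}$ and $|\lambda-\lambda_h^{\rm (CR, E)}|\lesssim h^2|u|_{2,\Om}$, combine them with $\parallel u_h^{\rm (CR, E)}\parallel_{0,\Om}=1$ and the discrete Poincaré inequality $\parallel v_h\parallel_{0,\Om}\lesssim\parallel v_h\parallel_h$ on $V_h^{\rm CR}$, to bound the right-hand side by $h^2|u|_{2,\Om}\parallel v_h\parallel_h$. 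Choosing $v_h=u_h^{\rm (CR,\ f)}-u_h^{\rm (CR, E)}$ produces the superclose bound $\parallel u_h^{\rm (CR,\ f)}-u_h^{\rm (CR, E)}\parallel_h\lesssim h^2|u|_{2,\Om}$.

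Finally, using the $L^2$-stability of the recovery operator of Definition \ref{Def:R}, $\parallel K_h\nabla_h v_h\parallel_{0,\Om}\lesssim\parallel\nabla_h v_h\parallel_{0,\Om}=\parallel v_h\parallel_h$, the superclose bound gives $\parallel K_h\nabla_h(u_h^{\rm (CR,\ f)}-u_h^{\rm (CR, E)})\parallel_{0,\Om}\lesssim h^2|u|_{2,\Om}$, which is of strictly higher quality than the target and is absorbed into it. Assembling the two contributions then yields the stated estimate. I expect the genuine work to lie not in the superclose step, which is routine once the auxiliary load $f=\lambda u$ is introduced, but in two bookkeeping points: verifying that the eigenvalue theory \eqref{CR:est} delivers exactly the $O(h^2)$ consistency for both the $L^2$ eigenfunction error and the eigenvalue gap, and fixing the sign and normalization of $u_h^{\rm (CR, E)}$ (for a simple eigenvalue) so that $u-u_h^{\rm (CR, E)}$ is truly $O(h^2)$ in $L^2$. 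Note also that the $|\ln h|^{1/2}$ factor and the full $h^2|u|_{\frac{7}{2},\Om}$ order are inherited entirely from Theorem \ref{superbd}, the superclose remainder contributing only a lower-order perturbation.
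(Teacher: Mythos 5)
Your proposal is correct and takes essentially the same route as the paper: introduce the auxiliary CR source problem with load $\lambda u$ (the paper's \eqref{CRbdpro}), apply Theorem \ref{superbd} to it, establish the superclose bound $\parallel \nabla_h(u_h^{\rm (CR,\ f)}-u_h^{\rm (CR, E)})\parallel_{0,\Om}\lesssim h^2|u|_{2,\Om}$ from the a priori estimates \eqref{CR:est}, and conclude via the boundedness of $K_h$ and the triangle inequality. The only cosmetic difference is in the superclose step, which you close with a discrete Poincar\'e inequality and the test function $v_h=u_h^{\rm (CR,\ f)}-u_h^{\rm (CR, E)}$, while the paper pairs $\lambda_h^{\rm (CR, E)}u_h^{\rm (CR, E)}-\lambda u$ against $u_h^{\rm (CR, E)}-u_h^{\rm (CR,\ \lambda u)}$ in $L^2$ (using in addition the $L^2$ estimate for the auxiliary source problem); both give the same $O(h^2)$ bound.
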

\begin{proof}
Let $u^{\rm (CR,\ \lambda u)}_h\in V_h^{\rm CR}$ be the solution of the following source problem
\be\label{CRbdpro}
(\nabla_h u^{\rm (CR,\ \lambda u)}_h, \nabla_h v_h)=\lambda (u,v_h)\quad \text{ for any } v_h\in V^{\rm CR}_h.
\ee
Since $(\lambda, u)$ is the eigenpair of \eqref{variance}, it follows from Theorem \ref{superbd} that
\be\label{Rh1}
\parallel \nabla u-K_h \nabla_h  u^{\rm (CR,\ \lambda u)}_h\parallel_{0,\Om}\lesssim h^2|\ln h|^{1/2}|u|_{\frac{7}{2},\Om}.
\ee
A combination of \eqref{discrete} and \eqref{CRbdpro} yields
\be\label{CRESdiff0}
\parallel\nabla_h u^{\rm (CR, E)}_h-\nabla_h u^{\rm (CR,\ \lambda u)}_h\parallel_{0,\Om}^2=(\lambda^{\rm (CR, E)}_h u^{\rm (CR, E)}_h - \lambda u, u^{\rm (CR, E)}_h-u^{\rm (CR,\ \lambda u)}_h)
\ee
By \eqref{CR:est},
\begin{equation}\label{CRESdiff1}
\parallel \lambda^{\rm (CR, E)}_h u^{\rm (CR, E)}_h - \lambda u\parallel_{0,\Om}\leq |\lambda^{\rm (CR, E)}_h|\parallel u^{\rm (CR, E)}_h - u\parallel_{0,\Om}+ | \lambda^{\rm (CR, E)}_h -  \lambda |\lesssim h^2|u|_{2,\Om}.
\end{equation}
Thanks to \eqref{CRbdpro},
\be\label{CRESdiff2}
\parallel u^{\rm (CR, E)}_h-u^{\rm (CR,\ \lambda u)}_h\parallel_{0,\Om}\leq \parallel u^{\rm (CR, E)}_h-u\parallel_{0,\Om}+ \parallel u-u^{\rm (CR,\ \lambda u)}_h\parallel_{0,\Om}\lesssim h^2|u|_{2,\Om}.
\ee
A substitution of \eqref{CRESdiff1} and \eqref{CRESdiff2} to \eqref{CRESdiff0} leads to
\be\label{CRESdiff}
\parallel\nabla_h u^{\rm (CR, E)}_h-\nabla_h u^{\rm (CR,\ \lambda u)}_h\parallel_{0,\Om}^2\lesssim h^4|u|_{2,\Om}^2.
\ee
As pointed out in \cite{Brandts1994Superconvergence}, the gradient recovery operator $K_h$ is bounded. By \eqref{CRESdiff},
\be\label{Rh2}
\parallel K_h  \nabla_h u^{\rm (CR, E)}_h-K_h  \nabla_h u^{\rm (CR,\ \lambda u)}_h\parallel_{0,\Om}\lesssim \parallel\nabla_h u^{\rm (CR, E)}_h-\nabla_h u^{\rm (CR,\ \lambda u)}_h\parallel_{0,\Om}\lesssim h^2|u|_{2,\Om}.
\ee
It follows from \eqref{Rh1} and \eqref{Rh2} that
$$
\parallel \nabla u-K_h \nabla_h  u^{\rm (CR, E)}_h\parallel_{0,\Om} \lesssim h^2|\ln h|^{1/2}|u|_{\frac{7}{2},\Om},
$$
which completes the proof.
\end{proof}

As analyzed in \cite{Brandts1994Superconvergence}, the vector $K_h \Pi_h^{\rm RT}\sigma^{\rm (f,\ S)}$ is a higher order approximation of $\sigma^{\rm (f,\ S)}$ than $\Pi_h^{\rm RT}\sigma^{\rm (f,\ S)}$ itself. Thanks to the equivalence between the RT element and the ECR element \cite{Hu2015The}, namely,
$$
\sigma_h^{\rm (RT,\ f)}= \nabla_h u^{\rm (ECR,\ f)}_h,
$$
a similar argument proves the following superconvergence result for the ECR element.

\begin{Th}\label{supereig2}
Suppose that $(\lambda, u)$ is the eigenpair of \eqref{variance}, $(\lambda^{\rm (ECR, E)}_h, u^{\rm (ECR, E)}_h)$ is the corresponding approximate eigenpair  of  \eqref{discrete} by the ECR element on an uniform triangulation $\cT_h$. Assume that $u\in H^{\frac{7}{2}}(\Om,\mathbb{R})\cap H^1_0(\Om,\mathbb{R})$, then,
$$
\parallel \nabla u-K_h \nabla_h  u^{\rm (ECR, E)}_h\parallel_{0,\Om}\lesssim h^2|\ln h|^{1/2}|u|_{\frac{7}{2},\Om}.
$$
\end{Th}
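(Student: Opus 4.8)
The plan is to mirror the argument just used for the CR element in Theorem~\ref{supereig1}, substituting the ECR canonical interpolation and its associated source problem for the CR ones, and invoking the equivalence $\sigma_h^{\rm (RT,\ f)}=\nabla_h u^{\rm (ECR,\ f)}_h$ in place of the special CR--RT relation. First I would introduce the auxiliary discrete source solution $u^{\rm (ECR,\ \lambda u)}_h\in V_h^{\rm ECR}$ defined by
\be
(\nabla_h u^{\rm (ECR,\ \lambda u)}_h, \nabla_h v_h)=\lambda (u,v_h)\quad \text{ for any } v_h\in V^{\rm ECR}_h,
\ee
which is the ECR finite element approximation of the Poisson problem with right-hand side $\lambda u$. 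Since $(\lambda,u)$ solves \eqref{variance}, the data is $f=\lambda u$, and the superconvergence result for the ECR element of the Poisson problem (the ECR analogue of Theorem~\ref{superbd}, obtained from Theorem~\ref{Lm:bdRT} via the equivalence between the RT and ECR elements) gives
\be
\parallel \nabla u-K_h \nabla_h  u^{\rm (ECR,\ \lambda u)}_h\parallel_{0,\Om}\lesssim h^2|\ln h|^{1/2}|u|_{\frac{7}{2},\Om}.
\ee

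Next I would estimate the difference between the discrete eigenfunction $u^{\rm (ECR, E)}_h$ and this auxiliary solution $u^{\rm (ECR,\ \lambda u)}_h$, exactly as in \eqref{CRESdiff0}--\eqref{CRESdiff}. Subtracting the two defining relations yields
\be
\parallel\nabla_h u^{\rm (ECR, E)}_h-\nabla_h u^{\rm (ECR,\ \lambda u)}_h\parallel_{0,\Om}^2=(\lambda^{\rm (ECR, E)}_h u^{\rm (ECR, E)}_h - \lambda u, u^{\rm (ECR, E)}_h-u^{\rm (ECR,\ \lambda u)}_h).
\ee
Using the a priori estimate \eqref{ECR:est} with $s=1$ to bound $\parallel \lambda^{\rm (ECR, E)}_h u^{\rm (ECR, E)}_h - \lambda u\parallel_{0,\Om}\lesssim h^2|u|_{2,\Om}$ and the triangle inequality together with the $L^2$ convergence of both terms toward $u$ to bound $\parallel u^{\rm (ECR, E)}_h-u^{\rm (ECR,\ \lambda u)}_h\parallel_{0,\Om}\lesssim h^2|u|_{2,\Om}$, the Cauchy--Schwarz inequality delivers $\parallel\nabla_h u^{\rm (ECR, E)}_h-\nabla_h u^{\rm (ECR,\ \lambda u)}_h\parallel_{0,\Om}\lesssim h^2|u|_{2,\Om}$. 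Applying the boundedness of the recovery operator $K_h$ then transfers this to $\parallel K_h\nabla_h u^{\rm (ECR, E)}_h-K_h\nabla_h u^{\rm (ECR,\ \lambda u)}_h\parallel_{0,\Om}\lesssim h^2|u|_{2,\Om}$, and a final triangle inequality combining this with the displayed superconvergence bound for the auxiliary solution concludes the proof.

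The genuinely new ingredient, and the step I expect to be the main obstacle, is establishing the ECR counterpart of Theorem~\ref{superbd}: the full one order superconvergence $\parallel \nabla u^{\rm (f,\ S)}-K_h\nabla_h u^{\rm (ECR,\ f)}_h\parallel_{0,\Om}\lesssim h^2(\dots)$ for the Poisson problem. Everything downstream is a routine transcription of the CR argument, but this bound must be derived from Theorem~\ref{Lm:bdRT} through the identity $\sigma_h^{\rm (RT,\ f)}=\nabla_h u^{\rm (ECR,\ f)}_h$. The delicate point is that Theorem~\ref{Lm:bdRT} controls $\sigma_h^{\rm (RT,\ f)}-\Pi_h^{\rm RT}\sigma^{\rm (f,\ S)}$, whereas one needs $\nabla u^{\rm (f,\ S)}-K_h\nabla_h u^{\rm (ECR,\ f)}_h$; bridging these requires the superapproximation property that $K_h\Pi_h^{\rm RT}\sigma^{\rm (f,\ S)}$ approximates $\sigma^{\rm (f,\ S)}=\nabla u^{\rm (f,\ S)}$ to order $h^2$ (as noted following Theorem~\ref{supereig1}), after which boundedness of $K_h$ and the triangle inequality close the gap. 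I would therefore first record this intermediate ECR source-problem superconvergence result as a consequence of Theorem~\ref{Lm:bdRT} and the equivalence identity, and only then run the eigenvalue reduction described above.
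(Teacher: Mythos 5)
Your proposal is correct and follows essentially the same route the paper intends: the paper proves Theorem \ref{supereig2} only by remarking that the equivalence $\sigma_h^{\rm (RT,\ f)}=\nabla_h u_h^{\rm (ECR,\ f)}$, together with the fact from \cite{Brandts1994Superconvergence} that $K_h\Pi_h^{\rm RT}\sigma^{\rm (f,\ S)}$ superapproximates $\sigma^{\rm (f,\ S)}$, allows the argument of Theorem \ref{supereig1} to be repeated verbatim, which is exactly your plan (auxiliary ECR source solution, the eigenvalue-to-source reduction as in \eqref{CRESdiff0}--\eqref{CRESdiff}, boundedness of $K_h$, and the triangle inequality). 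Your identification of the bridging step from $\sigma_h^{\rm (RT,\ f)}-\Pi_h^{\rm RT}\sigma^{\rm (f,\ S)}$ to $\nabla u^{\rm (f,\ S)}-K_h\nabla_h u_h^{\rm (ECR,\ f)}$ as the one genuinely new ingredient matches the paper's own (unwritten) reasoning precisely.
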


\section{Asymptotic expansions of eigenvalues by the CR element and the ECR element}
In this section, asymptotic expansions of eigenvalues are established and employed to achieve  high accuracy extrapolation methods for both the CR element and the ECR element.

For the CR element and the ECR element, their canonical interpolations do not admit the usual superclose property with respect to the finite element solutions in the energy norm. Thus, it is difficult to establish asymptotic expansions of eigenvalues by only using the canonical interpolations. To overcome such a difficulty, a new idea is proposed to expand the errors $\nabla u- \nabla_h u_h^{\rm (CR, E)}$ and $\nabla u- \nabla_h u_h^{\rm (ECR, E)}$ in \eqref{commutId}. The key of the idea is to use the canonical interpolation operator $\Pi_h^{\rm RT}$ of the RT element, instead of $\Pi_h^{\rm CR}$ and $\Pi_h^{\rm ECR}$ defined in \eqref{crinterpolation} and \eqref{ecrinterpolation}, respectively. To this end, consider the following source problem: seeks $(\sigma^{\rm (RT,\ \lambda u)}_h,u^{\rm (RT,\ \lambda u)}_h) \in \text{RT}(\cT_h)\times U_{\text{RT}}$ such that
\begin{equation}\label{RTbdPro}
\begin{aligned}
(\sigma^{\rm (RT,\ \lambda u)}_h,\tau_h)+(u^{\rm (RT,\ \lambda u)}_h,\text{div}\tau_h)&=0&& \text{ for any }\tau_h\in \text{RT}(\cT_h),\\
(\text{div}\sigma^{\rm (RT,\ \lambda u)}_h,v_h)&=-\lambda (u,v_h)&&\text{ for any }v_h\in U_{\text{RT}}.
\end{aligned}
\end{equation}
Note that $\sigma^{\rm (RT,\ \lambda u)}_h $ is the RT element solution of $\sigma^{\rm (\lambda u,\ S)}:=\nabla u$. It follows from the theory of mixed finite element methods \cite{Douglas1985Global} that
\be\label{RT:est}
\parallel u- u^{\rm (RT,\ \lambda u)}_h\parallel_{0,\Om} + \parallel \sigma^{\rm (\lambda u,\ S)} -\sigma^{\rm (RT,\ \lambda u)}_h\parallel_{0,\Om}\lesssim h^{s}\parallel u\parallel_{1+s,\Om},
\ee
provided that $u\in H^{1+s}(\Om,\mathbb{R})\cap H^1_0(\Om,\mathbb{R})$, $\ 0< s\leq 1$.

\subsection{Taylor expansions of interpolation errors}
Denote the interpolation operators $\Pi_K^0: L^2(K)\rightarrow P_0(K)$ and $\Pi_h^0: L^2(\Om)\rightarrow U_{\rm RT}$ by
\be\label{Pi0est}
\Pi_K^0 w =\frac{1}{K}\int_K w\dx\quad \text{ and }\quad \Pi_h^0 w\big |_K =\frac{1}{K}\int_K w\dx,
\ee
respectively. On each element $K$, denote the centroid of element $K$ by $\bold{M}_K=(M_1,M_2)$ and the midpoint of edge $e_i$ by $\bold{m}_i $. Let $A_K=\sum_{i,j =1, i\neq j}^3 \big ((p_{i1}-p_{j1})^2 - (p_{i2}-p_{j2})^2\big )$, $B_K= \sum_{i=1}^3\big( 2p_{i1}p_{i2} -\sum_{j=1,j\neq i}^3 p_{i1}p_{j2}\big  )$ and $H_K=\sum_{i=1}^3 |e_i|^2 $. We introduce five short-hand notations
\begin{align*}\label{consDef}
\phi^{\rm RT}_1(\bold{x})&=(x_1-M_1, -x_2+M_2)^T,&\phi^{\rm RT}_2(\bold{x})=(x_2-M_2, x_1-M_1)^T,\\
\phi^{\rm ECR}_1(\bold{x})&=(x_1-M_1)^2-(x_2-M_2)^2,&\phi^{\rm ECR}_2(\bold{x})=(x_1-M_1)(x_2-M_2),\\
\phi^{\rm ECR}_3(\bold{x})&=2 - \frac{36}{H_K}\sum_{i=1}^2(x_i-M_i)^2.&
\end{align*}
Note that functions $\phi^{\rm RT}_1$ and $\phi^{\rm RT}_2$ belong to the compliment space of the shape function space of the RT element with respect to $\nabla P_2(K)$, and functions $\phi^{\rm ECR}_1$, $\phi^{\rm ECR}_2$ and $\phi^{\rm ECR}_3$ belong to the compliment space of the shape function space of the ECR element with respect to $P_2(K)$.

For any $w\in H^2(K,\mathbb{R})$, define the Taylor expansions of the interpolation errors $(I-\Pi^{\rm CR}_h)w $, $(I-\Pi^{\rm ECR}_h)w $ and $(I-\Pi_h^{\rm RT})\nabla w $ by
\begin{align*}
P_K^{\rm CR}(\nabla^2 w)&=\frac{\Pi_K^0 (\partial_{ x_1x_1 } w-\partial_{ x_2x_2} w)}{4}(I-\Pi_h^{\rm ECR})\phi^{\rm ECR}_1 + \Pi_K^0 \partial_{ x_1x_2} w(I-\Pi_h^{\rm ECR})\phi^{\rm ECR}_2\\
&+\big(-\frac{A_K+H_K}{144}\Pi_K^0 \partial_{ x_1x_1 } w-\frac{H_K-A_K}{144}\Pi_K^0 \partial_{ x_2x_2 } w-\frac{B_K}{36}\Pi_K^0 \partial_{ x_1x_2}w\big)\phi^{\rm ECR}_3,\\
P_K^{\rm ECR}(\nabla^2 w)&=\frac{\Pi_K^0 (\partial_{ x_1x_1 } w-\partial_{ x_2x_2} w)}{4}(I-\Pi_h^{\rm ECR})\phi^{\rm ECR}_1 + \Pi_K^0 \partial_{ x_1x_2} w(I-\Pi_h^{\rm ECR})\phi^{\rm ECR}_2,\\
P_K^{\rm RT}(\nabla^2 w)&=\frac{\Pi^0_K (\partial_{x_1x_1}w-\partial_{x_2x_2}w)}{2}(I-\Pi_h^{\rm RT})\phi^{\rm RT}_1 + \Pi^0_K \partial_{x_1x_2}w (I-\Pi_h^{\rm RT})\phi^{\rm RT}_2.
\end{align*}
\begin{Lm}\label{Lm:cr}
For any quadratic function $w\in P_2(K)$,
\begin{equation}\label{identity:cr}
(I-\Pi^{\rm CR}_h)w\big|_K=P_K^{\rm CR}(\nabla^2 w),
\end{equation}
\begin{equation}\label{identity:ecr}
(I-\Pi^{\rm ECR}_h)w\big|_K=P_K^{\rm ECR}(\nabla^2 w),
\end{equation}
\begin{equation}\label{identity:RT}
(I-\Pi^{\rm RT}_h)\nabla w\big |_K=P_K^{\rm RT}(\nabla^2 w),
\end{equation}
\begin{equation}\label{identity:RT2}
\begin{split}
\parallel P_K^{\rm RT}(\nabla^2 w)\parallel_{0,K}^2=&\frac{1}{4}c_{11}^{\rm RT}|_K(\partial_{x_1x_1}w-\partial_{x_2x_2}w)^2 +  c_{22}^{\rm RT}|_K(\partial_{x_1x_2}w)^2 \\
&+ c_{12}^{\rm RT}|_K(\partial_{x_1x_1}w\partial_{x_1x_2}w-\partial_{x_2x_2}w\partial_{x_1x_2}w),
\end{split}
\end{equation}
where $c_{ij}^{\rm RT}|_K=\int_K \big((I-\Pi_h^{\rm RT})\phi^{\rm RT}_i\big )^T(I-\Pi_h^{\rm RT})\phi^{\rm RT}_j\dx$, $1\leq i\leq 6, j= 1, 2$ are constant.
\end{Lm}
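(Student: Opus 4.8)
The plan is to reduce all four assertions to the purely quadratic part of $w$ and then to decompose that part along the complement bases. Since $w\in P_2(K)$, Taylor expansion about the centroid $\bold{M}_K$ gives $w=\ell+Q$ with $\ell\in P_1(K)$ and $Q(\bold{x})=\tfrac12(\bold{x}-\bold{M}_K)^T\nabla^2w\,(\bold{x}-\bold{M}_K)$, the Hessian $\nabla^2w$ being the constant matrix with entries $a=\partial_{x_1x_1}w$, $b=\partial_{x_1x_2}w$, $c=\partial_{x_2x_2}w$. Because $\Pi^{\rm CR}_h$ and $\Pi^{\rm ECR}_h$ reproduce $P_1(K)$ and $\Pi^{\rm RT}_h$ reproduces the constant field $\nabla\ell$, one has $(I-\Pi^{\rm CR}_h)w=(I-\Pi^{\rm CR}_h)Q$ on $K$ and $(I-\Pi^{\rm RT}_h)\nabla w=(I-\Pi^{\rm RT}_h)\nabla Q$, so only $Q$ and $\nabla Q$ need be analyzed.

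For \eqref{identity:RT} and \eqref{identity:ecr} I would simply match coefficients. The linear vector field $\nabla Q=\nabla^2w\,(\bold{x}-\bold{M}_K)$ decomposes as $\nabla Q=\tfrac{a-c}{2}\phi^{\rm RT}_1+b\,\phi^{\rm RT}_2+\tfrac{a+c}{2}(\bold{x}-\bold{M}_K)$, and since $\bold{x}-\bold{M}_K\in\mathrm{RT}_K$ is reproduced by $\Pi^{\rm RT}_h$, applying $I-\Pi^{\rm RT}_h$ annihilates the last summand and leaves exactly $P_K^{\rm RT}(\nabla^2w)$ (recall $\Pi^0_K$ fixes the constants $a-c$ and $b$). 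Identically, $Q=\tfrac{a-c}{4}\phi^{\rm ECR}_1+b\,\phi^{\rm ECR}_2+\tfrac{a+c}{4}\,|\bold{x}-\bold{M}_K|^2$, and since $|\bold{x}-\bold{M}_K|^2\in\mathrm{ECR}(K)$ is reproduced by $\Pi^{\rm ECR}_h$, the error equals $P_K^{\rm ECR}(\nabla^2w)$. Then \eqref{identity:RT2} is obtained by squaring the two-term expression for $P_K^{\rm RT}$ from \eqref{identity:RT}: the diagonal integrals give $\tfrac14 c^{\rm RT}_{11}(a-c)^2$ and $c^{\rm RT}_{22}b^2$, while the cross integral $c^{\rm RT}_{12}$ carries $(a-c)b=\partial_{x_1x_1}w\,\partial_{x_1x_2}w-\partial_{x_2x_2}w\,\partial_{x_1x_2}w$, matching the stated right-hand side.

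The CR identity \eqref{identity:cr} is the main obstacle, because $\Pi^{\rm CR}_h$ does not reproduce the bubble $|\bold{x}-\bold{M}_K|^2$. I would write $(I-\Pi^{\rm CR}_h)Q=P_K^{\rm ECR}(\nabla^2w)+(\Pi^{\rm ECR}_h-\Pi^{\rm CR}_h)Q$ and show the last term is a scalar multiple $\beta\,\phi^{\rm ECR}_3$. Both interpolants share the three edge averages of $Q$, so their difference lies in $\mathrm{ECR}(K)$ with vanishing edge averages; by unisolvence of the ECR element this subspace is one-dimensional, and it is spanned by $\phi^{\rm ECR}_3$ since $\tfrac{1}{|e_i|}\int_{e_i}|\bold{x}-\bold{M}_K|^2\ds=\tfrac{H_K}{18}$ is the same for every edge, whence $\int_{e_i}\phi^{\rm ECR}_3\ds=0$. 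To fix $\beta$ I would take element averages: the midpoint quadrature (exact on quadratics) together with $\bold{m}_i-\bold{M}_K=-\tfrac12(\bold{p}_i-\bold{M}_K)$ gives $\int_K\phi^{\rm ECR}_3\dx=|K|$ and, crucially, $\int_K\Pi^{\rm CR}_hQ\dx=2\int_KQ\dx$, so that $\beta=\tfrac1{|K|}\int_K(I-\Pi^{\rm CR}_h)Q\dx=-\tfrac1{|K|}\int_KQ\dx$.

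It then remains to check that $-\tfrac1{|K|}\int_KQ\dx$ is the coefficient of $\phi^{\rm ECR}_3$ in $P_K^{\rm CR}$, which is the most computational step. Evaluating the centered second moments by midpoint quadrature and rewriting the vertex sums via $\sum_{i<j}(p_{i1}-p_{j1})^2=3\sum_i(p_{i1}-M_1)^2$ together with the definitions of $A_K$, $B_K$, $H_K$ gives $\sum_i(p_{i1}-M_1)^2=\tfrac{H_K+A_K}{6}$, $\sum_i(p_{i2}-M_2)^2=\tfrac{H_K-A_K}{6}$ and $\sum_i(p_{i1}-M_1)(p_{i2}-M_2)=\tfrac{B_K}{3}$; substituting yields $\beta=-\tfrac{1}{144}\big((A_K+H_K)a+(H_K-A_K)c+4B_Kb\big)$, exactly the $\phi^{\rm ECR}_3$-coefficient of $P_K^{\rm CR}$. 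Adding $P_K^{\rm ECR}(\nabla^2w)$ reconstructs $P_K^{\rm CR}(\nabla^2w)$ and proves \eqref{identity:cr}. The two points I expect to require the most care are the spanning and one-dimensionality claim for $\phi^{\rm ECR}_3$ and keeping the $A_K,B_K,H_K$ bookkeeping consistent with the sign conventions of the definitions.
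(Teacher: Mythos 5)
Your proposal is correct, and I checked its computational identities (the edge average of $|\bold{x}-\bold{M}_K|^2$ equal to $H_K/18$ on every edge, $\int_K\phi^{\rm ECR}_3\dx=|K|$, $\int_K\Pi^{\rm CR}_hQ\dx=2\int_KQ\dx$ via $\bold{m}_i-\bold{M}_K=-\frac{1}{2}(\bold{p}_i-\bold{M}_K)$, and the vertex-moment formulas) against a direct computation on a nonsymmetric triangle; they all hold, and your $\beta$ reproduces exactly the $\phi^{\rm ECR}_3$-coefficient in $P_K^{\rm CR}$. Structurally you use the same expansion as the paper: its proof writes $(I-\Pi^{\rm CR}_h)w=c_1(I-\Pi_h^{\rm ECR})\phi^{\rm ECR}_1+c_2(I-\Pi_h^{\rm ECR})\phi^{\rm ECR}_2+c_3\phi^{\rm ECR}_3$, justified by $\Pi_h^{\rm CR}(I-\Pi_h^{\rm ECR})\phi^{\rm ECR}_i=0$ and $\Pi_h^{\rm CR}\phi^{\rm ECR}_3=0$, and your splitting $(I-\Pi^{\rm CR}_h)Q=P_K^{\rm ECR}(\nabla^2w)+\beta\,\phi^{\rm ECR}_3$ is that same expansion; your treatment of \eqref{identity:ecr}, \eqref{identity:RT} and \eqref{identity:RT2} by coefficient matching likewise mirrors the paper's ``similar procedure.'' Where you genuinely diverge is in pinning down the last coefficient: the paper determines all $c_i$ by ``taking second order derivatives on both sides,'' which for $c_3$ tacitly requires computing the radial components of $\Pi_h^{\rm ECR}\phi^{\rm ECR}_1$ and $\Pi_h^{\rm ECR}\phi^{\rm ECR}_2$ (this is where $A_K$, $B_K$ actually enter, a step the paper elides), whereas you fix $\beta$ by element averages; this buys a more self-contained derivation, at the cost of needing the unisolvence argument that ${\rm ECR}(K)$-functions with vanishing edge averages form $\mathrm{span}\{\phi^{\rm ECR}_3\}$, which you supply correctly. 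Two small points to tighten: first, your formula $\beta=\frac{1}{|K|}\int_K(I-\Pi^{\rm CR}_h)Q\dx$ silently uses $\int_K P_K^{\rm ECR}(\nabla^2w)\dx=0$, which is true by the element-integral condition in \eqref{ecrinterpolation} but should be stated. Second, on the bookkeeping you yourself flagged: your identity $\sum_i(p_{i1}-M_1)^2=\frac{H_K+A_K}{6}$ holds only when the sum in the definition of $A_K$ is read over \emph{unordered} pairs; read literally as an ordered sum over $i\neq j$ (which doubles $A_K$) one gets $\frac{2H_K+A_K}{12}$ instead. The unordered reading is, however, the one under which the paper's stated coefficient $-\frac{A_K+H_K}{144}$ is itself correct (I verified both against the explicit interpolants on a concrete triangle), so your proof is consistent with the statement as intended.
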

\begin{proof}
Note that $\phi^{\rm ECR}_1$, $\phi^{\rm ECR}_2$ and $\phi^{\rm ECR}_3$ are linearly independent, and
$$
P_2(K)= P_1(K)+\rm span\{\phi^{\rm ECR}_i, 1\leq i\leq 3\}.
$$
Since
$$
\Pi_h^{\rm CR}(I-\Pi_h^{\rm ECR})\phi^{\rm ECR}_i=0,\quad \Pi_h^{\rm CR}\phi^{\rm ECR}_3=0\quad\text{ for any }i=1, 2,
$$
The interpolation error $(I-\Pi^{\rm CR}_h)w$ can be expressed in the following form:
$$
(I-\Pi^{\rm CR}_h)w=c_1(I-\Pi_h^{\rm ECR})\phi^{\rm ECR}_1 + c_2(I-\Pi_h^{\rm ECR})\phi^{\rm ECR}_2 +c_3\phi^{\rm ECR}_3.
$$
The coefficients $c_i$ can be determined by taking second order derivatives on both sides of the above identity. It leads to

$$
c_1=\frac{1}{4} (\partial_{x_1x_1}w-\partial_{x_2x_2}w), \text{\quad }c_2=\partial_{x_1x_2}w,
$$
and
$$
c_3=-\frac{A_K+H_K}{144}\partial_{x_1x_1}w-\frac{H_K-A_K}{144}\partial_{x_2x_2}w-\frac{B_K}{36}\partial_{x_1x_2}w\emph{},
$$
which proves \eqref{identity:cr}. A similar procedure verifies the expansions \eqref{identity:ecr}, \eqref{identity:RT} and \eqref{identity:RT2}, which completes the proof.
\qed
\end{proof}
Refine a triangulation $\cT_{2h}$ into a half-sized triangulation uniformly to obtain $\cT_h$, namely, for any element $K_{2h}\in \cT_{2h}$, $K_{2h}=\cup_{l=1}^4 K_{h}^l$ where $K_{h}^l\in \cT_h, 1\leq l\leq 4$.
\begin{Lm}\label{Lm:4timesRT}
For any $w\in P_2(K_{2h})$, it holds that
\begin{align}
\parallel P_{K_{2h}}^{\rm RT}(\nabla^2 w)\parallel_{0,K_{2h}}^2 &= 4\sum_{l=1}^4\parallel P_{K_{h}^l}^{\rm RT}(\nabla^2 w)\parallel_{0,K_{h}^l}^2,\label{RT4time}\\
\int_{K_{2h}}P^{\rm ECR}_{K_{2h}}(\nabla^2 w)\dx &=4\sum_{l=1}^4\int_{K_{h}^l} P^{\rm ECR}_{K_{h}^l}(\nabla^2 w)\dx.\label{nablau4time}
\end{align}
\end{Lm}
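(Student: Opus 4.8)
The plan is to exploit the self-similar structure of the uniform (midpoint) refinement and reduce both identities to a single scaling law for the local Taylor-expansion fields. First I would record the elementary but essential observation that, since $w\in P_2(K_{2h})$, the Hessian $\nabla^2 w$ is a constant matrix on $K_{2h}$ and on every subtriangle $K_h^l$; consequently the scalar coefficients multiplying $\phi_1^{\rm RT},\phi_2^{\rm RT}$ in $P_K^{\rm RT}(\nabla^2 w)$ (respectively $\phi_1^{\rm ECR},\phi_2^{\rm ECR}$ in $P_K^{\rm ECR}(\nabla^2 w)$) take the \emph{same} value on $K_{2h}$ and on each $K_h^l$. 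Thus the only $K$-dependence left sits in the interpolation-error fields $(I-\Pi_h^{\rm RT})\phi_i^{\rm RT}$ and $(I-\Pi_h^{\rm ECR})\phi_i^{\rm ECR}$, which are built from the centroid $\bold{M}_K$ of the host triangle.

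Next I would set up the geometry of the refinement. Each of the three corner subtriangles is the image of $K_{2h}$ under the homothety $\Phi_l(\bold{x})=\tfrac12(\bold{x}-\bold{p}_l)+\bold{p}_l$ centred at a vertex with ratio $\tfrac12$, while the central (medial) triangle shares the centroid of $K_{2h}$ and is the image under the point reflection $\Phi_c(\bold{x})=-\tfrac12(\bold{x}-\bold{M}_{K_{2h}})+\bold{M}_{K_{2h}}$, i.e. a homothety with ratio $s_c=-\tfrac12$. In all four cases the centred coordinate transforms as $\bold{x}'-\bold{M}_{K_h^l}=s_l(\bold{x}-\bold{M}_{K_{2h}})$ with $s_l=\pm\tfrac12$. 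Since $\phi_1^{\rm RT},\phi_2^{\rm RT}$ are homogeneous of degree one and $\phi_1^{\rm ECR},\phi_2^{\rm ECR}$ are homogeneous of degree two in the centred variable, the pullbacks satisfy $\phi_i^{\rm RT}\circ\Phi_l=s_l\,\phi_i^{\rm RT}$ and $\phi_i^{\rm ECR}\circ\Phi_l=s_l^2\,\phi_i^{\rm ECR}$ (the $\phi$'s being referred to the appropriate centroids). The decisive step is then to transport these identities through the interpolation operators: invoking the commutativity of $\Pi_h^{\rm RT}$ with the Piola transform and of $\Pi_h^{\rm ECR}$ with pullback by a similarity, I would obtain $(I-\Pi_h^{\rm RT})\phi_i^{\rm RT}\circ\Phi_l=s_l\,(I-\Pi_h^{\rm RT})\phi_i^{\rm RT}$ and $(I-\Pi_h^{\rm ECR})\phi_i^{\rm ECR}\circ\Phi_l=s_l^2\,(I-\Pi_h^{\rm ECR})\phi_i^{\rm ECR}$, whence $P_{K_h^l}^{\rm RT}(\nabla^2 w)\circ\Phi_l=s_l\,P_{K_{2h}}^{\rm RT}(\nabla^2 w)$ and $P_{K_h^l}^{\rm ECR}(\nabla^2 w)\circ\Phi_l=s_l^2\,P_{K_{2h}}^{\rm ECR}(\nabla^2 w)$.

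To finish, I would change variables by $\Phi_l$ using $|\det D\Phi_l|=\tfrac14$. For the RT identity the integrand is squared, so $\|P_{K_h^l}^{\rm RT}(\nabla^2 w)\|_{0,K_h^l}^2=s_l^2\cdot\tfrac14\,\|P_{K_{2h}}^{\rm RT}(\nabla^2 w)\|_{0,K_{2h}}^2=\tfrac1{16}\|P_{K_{2h}}^{\rm RT}(\nabla^2 w)\|_{0,K_{2h}}^2$, the sign of $s_l$ being irrelevant; summing the four equal contributions yields \eqref{RT4time}. For the ECR identity the integrand is unsquared, so $\int_{K_h^l}P_{K_h^l}^{\rm ECR}(\nabla^2 w)\dx=s_l^2\cdot\tfrac14\int_{K_{2h}}P_{K_{2h}}^{\rm ECR}(\nabla^2 w)\dx=\tfrac1{16}\int_{K_{2h}}P_{K_{2h}}^{\rm ECR}(\nabla^2 w)\dx$, where $s_l^2=\tfrac14>0$ for every subtriangle so no cancellation occurs; summing gives \eqref{nablau4time}. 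Alternatively, for \eqref{RT4time} one can bypass the change of variables by combining the field scaling with the closed form \eqref{identity:RT2}, reducing the claim to $c_{ij}^{\rm RT}|_{K_h^l}=\tfrac1{16}c_{ij}^{\rm RT}|_{K_{2h}}$.

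I expect the main obstacle to be the rigorous justification of the covariance of the interpolation operators under $\Phi_l$. For $\Pi_h^{\rm RT}$ this is standard once the Piola transform is used, since the edge-flux degrees of freedom are preserved. For $\Pi_h^{\rm ECR}$ one must note that the element is \emph{not} affine equivalent --- the bubble $x_1^2+x_2^2$ is not invariant under a general affine map --- but it \emph{is} similarity equivalent, and each $\Phi_l$ is a similarity (its linear part is a scalar multiple of an orthogonal matrix, $\tfrac12 I$ or $-\tfrac12 I$). Hence $\mathrm{ECR}(K_{2h})$ pulls back to $\mathrm{ECR}(K_h^l)$ and the edge- and element-average degrees of freedom are respected, which is exactly what makes the commutation valid; checking these points carefully for the central reflected triangle is the only delicate part.
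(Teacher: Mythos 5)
Your proof is correct, and the underlying scaling fact is the same one the paper uses --- on each half-sized subtriangle the RT interpolation-error field scales linearly and the ECR one quadratically, so together with the area factor $\tfrac14$ each subtriangle contributes exactly $\tfrac{1}{16}$ of the parent quantity --- but your derivation of that fact is genuinely different in execution. The paper proceeds by explicit computation on a single (corner) subtriangle: it expands the interpolation errors in the local RT bases $\{\bold{x}-\bold{p}_j\}$, checks equality of the expansion coefficients $a_{ij}^{2h}=a_{ij}^{h}$ from the relations $\bold{m}_j^{2h}-\bold{M}^{2h}=2(\bold{m}_j^{h}-\bold{M}^{h})$, $d_j^{2h}=2d_j^{h}$, $\bold{n}_j^{2h}=\bold{n}_j^{h}$, and then verifies $c_{ij}^{\rm RT}|_{K_{2h}}=16\,c_{ij}^{\rm RT}|_{K_h^1}$ by expanding the integrals and using the vanishing centred first moment and the sixteenfold scaling of centred second moments; your parenthetical alternative via \eqref{identity:RT2} is exactly this route, and the paper dispatches the ECR identity \eqref{nablau4time} only by ``a similar procedure''. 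Your main argument instead encodes everything in covariance of the canonical interpolations under the four homotheties $\Phi_l$ of ratio $s_l=\pm\tfrac12$ followed by a change of variables, which buys two things: a single computation covers \eqref{RT4time} and \eqref{nablau4time} simultaneously, and the medial subtriangle is treated explicitly rather than by analogy --- note that for it the paper's stated relation $\bold{n}_j^{2h}=\bold{n}_j^{h}$ holds only after a sign flip that cancels against the corresponding flip in $\bold{m}_j^{h}-\bold{M}^{h}$, bookkeeping which your $s_l=-\tfrac12$ absorbs automatically (and your observations that the sign disappears after squaring, respectively that $s_l^2=\tfrac14>0$ precludes cancellation in the unsquared ECR integral, make this transparent). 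Your covariance claims are also sound as stated: since the linear part of each $\Phi_l$ is the scalar matrix $\pm\tfrac12 I$, plain pullback already maps ${\rm RT}_K$ into itself and scales all edge fluxes by the common factor $s_l$ (the Piola transform differs from pullback only by the scalar $s_l^{-1}$ here), while the ECR space, though not affine-equivalent, is similarity-equivalent with edge- and element-mean degrees of freedom respected, so both commutations hold, including on the reflected central triangle.
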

\begin{proof}
In order to prove \eqref{RT4time}, it only needs to prove that
$$
\parallel P_{K_{2h}}^{\rm RT}(\nabla^2 w)\parallel_{0,K_{2h}}^2 = 16\parallel P_{K_{h}^l}^{\rm RT}(\nabla^2 w)\parallel_{0,K_{h}^l}^2,\quad 1\leq l\leq 4.
$$
For simplicity, only the case $l=1$ is considered here. Let $\bold{M}^{2h}=(M^{2h}_1, M^{2h}_2)$, $\{\bold{p}^{2h}_i\}_{i=1}^3$ and $\{e_i^{2h}\}_{i=1}^3$ be the centroid,  vertices and edges of element $K_{2h}$, respectively, and $\bold{M}^{h}=(M^{h}_1, M^{h}_2)$, $\{\bold{p}^{h}_i\}_{i=1}^3$ and $\{e_i^{h}\}_{i=1}^3$ be those of element $K_{h}^1$, respectively. For edge $e_i^{2h}$, denote the midpoint, the unit outward normal vector and the perpendicular height by $\bold{m}_i^{2h}$, $\bold{n}_i^{2h}$ and $d_i^{2h}$, respectively. And denote those of edge $e_i^h$ by $\bold{m}_i^{h}$, $\bold{n}_i^h$ and $d_i^{h}$, respectively. Let
\begin{equation*}
\begin{split}
\varphi^{\rm RT}_1(\bold{x}) = (x_1-M^{2h}_1, M^{2h}_2-x_2)^T,&\quad \varphi^{\rm RT}_2(\bold{x}) = (x_2-M^{2h}_2, x_1-M^{2h}_1)^T,\\
\psi^{\rm RT}_1(\bold{x}) = (x_1-M^{h}_1, M^{h}_2-x_2)^T,&\quad \psi^{\rm RT}_2(\bold{x}) = (x_2-M^{h}_2, x_1-M^{h}_1)^T.
\end{split}
\end{equation*}
Note that $\{\bold{x}-\bold{p}^{2h}_i\}_{i=1}^3$ and $\{\bold{x}-\bold{p}^{h}_i\}_{i=1}^3$ are the basis functions of the RT element on elements $K_{2h}$ and $K_{h}^1$, respectively. For $i=1, 2$,
\begin{equation*}
\begin{split}
(I-\Pi_{2h}^{\rm RT})\varphi^{\rm RT}_i(\bold{x})&=\varphi^{\rm RT}_i(\bold{x})-\sum_{j=1}^3a_{ij}^{2h}(\bold{x}-\bold{p}^{2h}_j),\\
(I-\Pi_h^{\rm RT})\psi^{\rm RT}_i(\bold{x})&=\psi^{\rm RT}_i(\bold{x})-\sum_{j=1}^3a_{ij}^h(\bold{x}-\bold{p}^h_j),
\end{split}
\end{equation*}
where $a_{ij}^{2h}=\frac{1}{d_j^{2h}}(\varphi^{\rm RT}_i(\bold{m}_j^{2h}))^T\bold{n}_j^{2h}$ and $a_{ij}^{h}=\frac{1}{d_j^{h}}(\varphi^{\rm RT}_i(\bold{m}_j^{h}))^T\bold{n}_j^{h}$. For each $1\leq i\leq 2$, $1\leq j\leq 3$, $\bold{m}_j^{2h}-\bold{M}^{2h}=2(\bold{m}_j^{h}-\bold{M}^{h})$, $d_j^{2h}=2d_j^{h}$ and $\bold{n}_j^{2h}=\bold{n}_j^{h}$. Thus, $a_{ij}^{2h}=a_{ij}^{h}$. By \eqref{identity:RT2}, it only remains to prove that
\begin{equation}\label{RT4relation}
c_{ij}^{\rm RT}|_{K_{2h}}=16c_{ij}^{\rm RT}|_{K_{h}^1}.
\end{equation}
Since $\int_{K_{2h}} (\bold{x}-\bold{M}^{2h})\dx=\bold{0}$ and $\int_{K_{2h}} (x_i - M^{2h}_i)(x_j - M^{2h}_j)\dx =16\int_{K_{h}^1} (x_i - M^{h}_i)(x_j - M^{h}_j)\dx$,
it holds that
\begin{equation*}\label{RT4expand}
\begin{split}
c_{11}^{\rm RT}|_{K_{2h}}=&\int_{K_{2h}} \sum_{i=1}^2(x_i - M^{2h}_i)^2 - 2\sum_{i=1}^3a_{1i}(x_1-M^{2h}_1, M^{2h}_2-x_2)(\bold{x}-\bold{M}^{2h})\\
& +\sum_{i,j=1}^3 a_{1i}a_{1j}\big ((\bold{x}-\bold{M}^{2h})^T (\bold{x}-\bold{M}^{2h}) + (\bold{M}^{2h}-\bold{p}^{2h}_i)^T(\bold{M}^{2h}-\bold{p}^{2h}_j)\big )\dx\\
=&16c_{11}^{\rm RT}|_{K_{h}^1}.
\end{split}
\end{equation*}
Similarly, \eqref{RT4relation} holds for $1\leq i, j\leq 2$, which completes the proof for \eqref{RT4time}.

A similar procedure proves \eqref{nablau4time}, which completes the proof.
\end{proof}
\subsection{Asymptotic expansions of eigenvalues by the ECR element}
Consider the source problem: seeks $u^{\rm (ECR,\ \lambda u)}_h\in V^{\rm ECR}_h$ such that
\be\label{ECRbdpro}
(\nabla u^{\rm (ECR,\ \lambda u)}_h,\nabla v_h)=(\lambda u ,v_h)\quad\text{ for any } v_h\in V^{\rm ECR}_h.
\ee
By \eqref{ECR:est} and a similar procedure for \eqref{CRESdiff},
\begin{equation*}\label{ecrdiff}
\parallel \nabla_h( u^{\rm (ECR,\ \lambda u)}_h - u^{\rm (ECR, E)}_h)\parallel_{0,\Om}\lesssim h^{2}|u|_{2,\Om},
\end{equation*}
provided that $u\in H^2(\Om, \mathbb{R})\cap H^1_0(\Om, \mathbb{R})$.

The following equivalence between the ECR element and the RT element \cite{Hu2015The} is crucial for expansions of eigenvalues by the ECR element
$$
\sigma^{\rm (RT,\ \lambda u)}_h= \nabla_h u^{\rm (ECR,\ \lambda u)}_h.
$$
As a consequence,
\begin{equation}\label{ECR3}
\parallel \sigma^{\rm (RT,\ \lambda u)}_h - \nabla_h u^{\rm (ECR, E)}_h\parallel_{0,\Om}\lesssim h^{2}|u|_{2,\Om},
\end{equation}
provided that $u\in  H^2(\Om, \mathbb{R})\cap H^1_0(\Om, \mathbb{R})$.

\begin{Lm}\label{Th:extraECR}
Suppose that $(\lambda, u)$ is the eigenpair of \eqref{variance} with $u \in H^{\frac{7}{2}}(\Om,\mathbb{R})\cap H^1_0(\Om,\mathbb{R})$, and $(\lambda^{\rm (ECR, E)}_h, u^{\rm (ECR, E)}_h)$ is the corresponding approximate eigenpair of \eqref{discrete} by the ECR element on an uniform triangulation $\cT_h$. It holds that
$$
\big |\parallel \nabla u-\nabla_h u^{\rm (ECR, E)}_h\parallel_0^2-\parallel \nabla u-\Pi^{\rm RT}_h\nabla u \parallel_0^2\big |\lesssim h^{3}|\ln h|^{1/2}|u|_{\frac{7}{2},\Om}^2.
$$
\end{Lm}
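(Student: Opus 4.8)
The plan is to reduce the difference of the two squared norms to a single inner product and then control its two factors separately: one factor by the full one order superconvergence of the RT element already proved in Theorem \ref{Lm:bdRT}, the other by the elementary first order energy estimate. Writing $a := \nabla u - \nabla_h u^{\rm (ECR, E)}_h$ and $b := \nabla u - \Pi^{\rm RT}_h\nabla u$ for brevity, the identity $\parallel a\parallel_{0,\Om}^2 - \parallel b\parallel_{0,\Om}^2 = (a-b,\,a+b)$ gives
$$
\big|\parallel a\parallel_{0,\Om}^2 - \parallel b\parallel_{0,\Om}^2\big| \leq \parallel a-b\parallel_{0,\Om}\,\big(\parallel a\parallel_{0,\Om} + \parallel b\parallel_{0,\Om}\big).
$$
The essential observation is that the non-polynomial term $\nabla u$ cancels in the difference, leaving $a-b = \Pi^{\rm RT}_h\nabla u - \nabla_h u^{\rm (ECR, E)}_h$, which is precisely the quantity that the RT superconvergence machinery is designed to handle.

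For the factor $\parallel a-b\parallel_{0,\Om}$ I would insert the auxiliary RT field $\sigma^{\rm (RT,\ \lambda u)}_h$ of \eqref{RTbdPro} and split
$$
\parallel a-b\parallel_{0,\Om} \leq \parallel \Pi^{\rm RT}_h\nabla u - \sigma^{\rm (RT,\ \lambda u)}_h\parallel_{0,\Om} + \parallel \sigma^{\rm (RT,\ \lambda u)}_h - \nabla_h u^{\rm (ECR, E)}_h\parallel_{0,\Om}.
$$
Since $\sigma^{\rm (RT,\ \lambda u)}_h$ is the RT solution associated with $\sigma^{\rm (\lambda u,\ S)} = \nabla u$, the first term is bounded by Theorem \ref{Lm:bdRT} applied with $\sigma^{\rm (f,\ S)} = \nabla u$; after rewriting $|\nabla u|_{\frac52,\Om} = |u|_{\frac72,\Om}$ and $|\nabla u|_{1,\infty,\Om} = |u|_{2,\infty,\Om}$, and using the two-dimensional Sobolev embedding $H^{\frac72}(\Om,\mathbb{R}) \hookrightarrow W^{2,\infty}(\Om,\mathbb{R})$ to absorb $|u|_{2,\infty,\Om} \lesssim \parallel u\parallel_{\frac72,\Om}$ with $\kappa$ a fixed constant, this contributes $\lesssim h^2|\ln h|^{1/2}|u|_{\frac72,\Om}$. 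The second term is exactly \eqref{ECR3}, hence $\lesssim h^2|u|_{2,\Om}$. Altogether $\parallel a-b\parallel_{0,\Om} \lesssim h^2|\ln h|^{1/2}|u|_{\frac72,\Om}$.

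For the remaining factor, both summands are of first order. By the Fortin estimate \eqref{fortin} applied to $\tau = \nabla u$,
$$
\parallel b\parallel_{0,\Om} = \parallel \nabla u - \Pi^{\rm RT}_h\nabla u\parallel_{0,\Om} \lesssim h|\nabla u|_{1,\Om} = h|u|_{2,\Om},
$$
while $\parallel a\parallel_{0,\Om} = \parallel \nabla_h(u - u^{\rm (ECR, E)}_h)\parallel_{0,\Om} \lesssim h\parallel u\parallel_{2,\Om}$ follows from the energy error bound \eqref{ECR:est} with $s=1$. Substituting both estimates into the product above, and using that the lower order norms of $u$ are controlled by $|u|_{\frac72,\Om}$, yields the claimed $h^3|\ln h|^{1/2}|u|_{\frac72,\Om}^2$.

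I expect the main obstacle to be conceptual rather than computational: one must recognize that $\nabla_h u^{\rm (ECR, E)}_h$ should be compared not directly with $\nabla u$ but with the canonical RT interpolant $\Pi^{\rm RT}_h\nabla u$, routed through the RT bridge $\sigma^{\rm (RT,\ \lambda u)}_h$ supplied by the equivalence between the ECR and RT elements, so that the hard superconvergence analysis of Section \ref{sec:super} can be reused verbatim. Once this intermediate field is in place, the argument is the difference-of-squares identity together with routine first order estimates; the only genuine care required is in converting the RT seminorms $|\nabla u|_{\frac52,\Om}$ and $|\nabla u|_{1,\infty,\Om}$ produced by Theorem \ref{Lm:bdRT} into norms of $u$, where the Sobolev embedding in two dimensions is used to keep the logarithmic factor as the only loss.
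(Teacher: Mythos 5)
Your proof is correct and takes essentially the same route as the paper: both arguments insert the intermediate fields $\Pi^{\rm RT}_h\nabla u$ and $\sigma^{\rm (RT,\ \lambda u)}_h$ and rest on exactly the two key inputs you identify, the RT superconvergence of Theorem \ref{Lm:bdRT} and the ECR--RT equivalence estimate \eqref{ECR3}. The only difference is organizational: where the paper expands $\parallel \nabla u-\nabla_h u^{\rm (ECR, E)}_h\parallel_{0,\Om}^2$ into three squares and three cross terms in \eqref{ECRtotal1} and bounds them one by one in \eqref{ECR1}--\eqref{ECR5}, you package the same estimates through the identity $\parallel a\parallel_{0,\Om}^2-\parallel b\parallel_{0,\Om}^2=(a-b,a+b)$ and one Cauchy--Schwarz, a mild streamlining; your explicit appeal to the embedding $H^{\frac{7}{2}}(\Om,\mathbb{R})\hookrightarrow W^{2,\infty}(\Om,\mathbb{R})$ to absorb $|u|_{2,\infty,\Om}$ also makes precise a step the paper leaves implicit in \eqref{ECR1}.
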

\begin{proof}
By the interpolation $\Pi^{\rm RT}_h\nabla u $, the solution $\sigma^{\rm (RT,\ \lambda u)}_h $ of the source problem \eqref{RTbdPro} by the RT element, the error of the approximate eigenfunction  in energy norm can be decomposed as
\begin{equation}\label{ECRtotal1}
\begin{split}
\parallel \nabla u-\nabla_h u^{\rm (ECR, E)}_h\parallel_{0,\Om}^2=&\parallel \nabla u-\Pi^{\rm RT}_h\nabla u \parallel_{0,\Om}^2 + \parallel \Pi^{\rm RT}_h\nabla u-\sigma^{\rm (RT,\ \lambda u)}_h \parallel_{0,\Om}^2\\
 + &\parallel \sigma^{\rm (RT,\ \lambda u)}_h- \nabla_h u^{\rm (ECR, E)}_h \parallel_{0,\Om}^2+2(\nabla u-\Pi^{\rm RT}_h\nabla u, \Pi^{\rm RT}_h\nabla u-\sigma^{\rm (RT,\ \lambda u)}_h) \\
+& 2(\nabla u-\Pi^{\rm RT}_h\nabla u, \sigma^{\rm (RT,\ \lambda u)}_h-\nabla_h u^{\rm (ECR, E)}_h)\\
+ &2(\Pi^{\rm RT}_h\nabla u-\sigma^{\rm (RT,\ \lambda u)}_h,\sigma^{\rm (RT,\ \lambda u)}_h- \nabla_h u^{\rm (ECR, E)}_h).
\end{split}
\end{equation}
Since $\sigma^{\rm (RT,\ \lambda u)}_h$ is the RT element solution of $\sigma^{\rm (\lambda u,\ S)}=\nabla u$, the superconvergence of the RT element in Theorem \ref{Lm:bdRT} reads
\be\label{ECR1}
\parallel \Pi^{\rm RT}_h\nabla u-\sigma^{\rm (RT,\ \lambda u)}_h \parallel_{0,\Om}\lesssim h^2|\ln h|^{1/2}|u|_{\frac{7}{2},\Om},
\ee
which leads to
\be\label{ECR2}
\big |(\nabla u-\Pi^{\rm RT}_h\nabla u, \Pi^{\rm RT}_h\nabla u-\sigma^{\rm (RT,\ \lambda u)}_h)\big |\lesssim h^{3}|\ln h|^{1/2}|u|_{\frac{7}{2},\Om}|u |_{2,\Om}.
\ee
The superconvergence result \eqref{ECR3} implies
\be\label{ECR4}
\big | (\nabla u-\Pi^{\rm RT}_h\nabla u, \sigma^{\rm (RT,\ \lambda u)}_h-\nabla_h u^{\rm (ECR, E)}_h)\big |\lesssim h^{3}|u|_{2,\Om}^2,
\ee
It follows from \eqref{ECR3} and \eqref{ECR1} that
\be\label{ECR5}
\big |(\Pi^{\rm RT}_h\nabla u-\sigma^{\rm (RT,\ \lambda u)}_h,\sigma^{\rm (RT,\ \lambda u)}_h- \nabla_h u^{\rm (ECR, E)}_h)\big |\lesssim h^{4}|\ln h|^{1/2}|u|_{\frac{7}{2},\Om}|u|_{2,\Om}.
\ee
A substitution of \eqref{ECR3}, \eqref{ECR1}, \eqref{ECR2}, \eqref{ECR4} and \eqref{ECR5} into \eqref{ECRtotal1} concludes
$$
\big |\parallel \nabla u-\nabla_h u^{\rm (ECR, E)}_h\parallel_{0,\Om}^2-\parallel \nabla u-\Pi^{\rm RT}_h\nabla u \parallel_{0,\Om}^2\big |\lesssim h^{3}|\ln h|^{1/2}|u|_{\frac{7}{2},\Om}^2,
$$
which completes the proof.
\end{proof}
In the following theorem, asymptotic expansions of eigenvalues by the ECR element are established and employed to prove that the accuracy of eigenvalues is improved from $O(h^2)$ to $O(h^3)$ by extrapolation methods.

\begin{Th}\label{ECR:extra}
Suppose that $(\lambda, u)$ is the eigenpair of \eqref{variance} with $u\in H^{\frac{7}{2}}(\Om,\mathbb{R})\cap H^1_0(\Om,\mathbb{R})$, and $(\lambda^{\rm (ECR, E)}_h, u^{\rm (ECR, E)}_h)$ is the corresponding approximate eigenpair of \eqref{discrete} by the ECR element on an uniform triangulation $\cT_h$. It holds that
$$
\big |\lambda-\lambda^{\rm ECR}_{\rm EXP}\big |\lesssim h^3|\ln h|^{1/2}|u|_{\frac{7}{2},\Om}^2,
$$
where the extrapolation eigenvalue $\lambda^{\rm ECR}_{\rm EXP}=\frac{4\lambda^{\rm (ECR, E)}_h-\lambda^{\rm (ECR, E)}_{2h}}{3}$.
\end{Th}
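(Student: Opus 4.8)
The plan is to extract from the ECR commuting identity an asymptotic expansion of the eigenvalue error whose leading term is $\parallel\nabla u-\Pi^{\rm RT}_h\nabla u\parallel_{0,\Om}^2$, and then to show that this leading term obeys an exact factor-four scaling under a uniform halving of the mesh, so that the combination $\frac{4\lambda^{\rm (ECR, E)}_h-\lambda^{\rm (ECR, E)}_{2h}}{3}$ annihilates it up to the claimed order. First I would write the ECR analogue of the commuting identity \eqref{commutId},
$$
\lambda-\lambda^{\rm (ECR, E)}_h=\parallel\nabla_h(u-u^{\rm (ECR, E)}_h)\parallel_{0,\Om}^2-2\lambda^{\rm (ECR, E)}_h(u-\Pi^{\rm ECR}_h u,u^{\rm (ECR, E)}_h)-\lambda^{\rm (ECR, E)}_h\parallel u-u^{\rm (ECR, E)}_h\parallel_{0,\Om}^2,
$$
and treat the last two terms as remainders. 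The term $\lambda^{\rm (ECR, E)}_h\parallel u-u^{\rm (ECR, E)}_h\parallel_{0,\Om}^2$ is $O(h^4)$ by \eqref{ECR:est}. For the middle term I would exploit the average-preserving property $\int_K\Pi^{\rm ECR}_h v\dx=\int_K v\dx$ from \eqref{ecrinterpolation}: since $\int_K(u-\Pi^{\rm ECR}_h u)\dx=0$, I may subtract the elementwise mean of $u^{\rm (ECR, E)}_h$ before integrating, so a Poincaré estimate on each element gives $|(u-\Pi^{\rm ECR}_h u,u^{\rm (ECR, E)}_h)|\lesssim h^3|u|_{2,\Om}\parallel\nabla_h u^{\rm (ECR, E)}_h\parallel_{0,\Om}=O(h^3)$. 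Feeding Lemma \ref{Th:extraECR} into the first term then yields
$$
\lambda-\lambda^{\rm (ECR, E)}_h=\parallel\nabla u-\Pi^{\rm RT}_h\nabla u\parallel_{0,\Om}^2+O(h^3|\ln h|^{1/2}|u|_{\frac{7}{2},\Om}^2).
$$

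The decisive step is the refinement relation $4\parallel\nabla u-\Pi^{\rm RT}_h\nabla u\parallel_{0,\Om}^2-\parallel\nabla u-\Pi^{\rm RT}_{2h}\nabla u\parallel_{0,\Om}^2=O(h^3|\ln h|^{1/2})$. To prove it I would localize elementwise and, on each element, replace $u$ by the quadratic whose constant Hessian is the elementwise average $\Pi_K^0\nabla^2 u$; by the exact Taylor identity \eqref{identity:RT}--\eqref{identity:RT2} the quantity $(I-\Pi^{\rm RT}_h)\nabla u$ then equals $P^{\rm RT}_K(\nabla^2 u)$ plus $(I-\Pi^{\rm RT}_h)$ applied to the interpolation remainder $\nabla^2 u-\Pi_K^0\nabla^2 u$, which contributes only $O(h^{7/2})$ after squaring and summing via a fractional Bramble-Hilbert estimate. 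This reduces the relation to comparing $\sum_{K_{2h}}\parallel P^{\rm RT}_{K_{2h}}(\nabla^2 u)\parallel_{0,K_{2h}}^2$ with $4\sum_{K_h}\parallel P^{\rm RT}_{K_h}(\nabla^2 u)\parallel_{0,K_h}^2$, which is exactly \eqref{RT4time} in Lemma \ref{Lm:4timesRT}, provided one common constant Hessian is used on the whole patch. Replacing the four fine-element averages by the single coarse average costs $O(h^{7/2})$ again, by the same fractional estimate together with the Lipschitz dependence of the quadratic form $\parallel P^{\rm RT}_K(\cdot)\parallel_{0,K}^2$ on its matrix argument. Combining the expansion with this relation gives
$$
\lambda-\lambda^{\rm ECR}_{\rm EXP}=\frac{4(\lambda-\lambda^{\rm (ECR, E)}_h)-(\lambda-\lambda^{\rm (ECR, E)}_{2h})}{3}=O(h^3|\ln h|^{1/2}|u|_{\frac{7}{2},\Om}^2).
$$

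The main obstacle I anticipate is this refinement step. While \eqref{RT4time} delivers the exact scaling for a single fixed quadratic, gluing it to a genuinely $H^{7/2}$ function forces control of two separate error sources: the per-element Taylor remainder, and the mismatch between the coarse Hessian average and the four fine Hessian averages on a common patch. Both are borderline, because $\nabla^2 u\in H^{3/2}$ sits at the edge of the Sobolev embedding, so the bookkeeping must be done with fractional seminorms; ensuring that every such contribution stays at order $h^{7/2}$, comfortably below the $h^3|\ln h|^{1/2}$ target that is inherited from the RT superconvergence of Theorem \ref{Lm:bdRT}, is the technical heart of the argument. The remaining ingredients---the bounds on the two lower-order terms in the commuting identity and the assembly of the final estimate---are routine once this scaling is in hand.
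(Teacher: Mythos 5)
Your proposal is correct, and its skeleton coincides with the paper's proof: start from the ECR analogue of the commuting identity \eqref{commutId}, invoke Lemma \ref{Th:extraECR} to replace the energy error by $\parallel \nabla u-\Pi^{\rm RT}_h\nabla u\parallel_{0,\Om}^2$ up to $O(h^3|\ln h|^{1/2})$, pass to the elementwise Taylor expansion via \eqref{identity:RT}--\eqref{identity:RT2} and the Bramble--Hilbert lemma, and annihilate the leading term through the factor-four scaling \eqref{RT4time} of Lemma \ref{Lm:4timesRT}. Where you genuinely deviate is the treatment of the consistency term $-2\lambda_h^{\rm (ECR,E)}(u-\Pi^{\rm ECR}_h u,u_h^{\rm (ECR,E)})$: the paper splits it into four pieces, retains $-2\lambda(u-\Pi^{\rm ECR}_h u,\Pi_h^0 u)$ as part of the leading expansion \eqref{ECRexpan}, and therefore needs the second scaling identity \eqref{nablau4time} to extrapolate that piece away; you instead dispose of the whole term at order $h^3$ using the mean-preserving condition in \eqref{ecrinterpolation} together with an elementwise Poincar\'e inequality. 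Your route is valid and in fact sharper: since $\int_K(u-\Pi^{\rm ECR}_h u)\dx=0$ on every $K$, the paper's retained piece $(u-\Pi^{\rm ECR}_h u,\Pi_h^0 u)$ vanishes identically (equivalently, $\int_K P_K^{\rm ECR}(\nabla^2 w)\dx=0$ because $(I-\Pi_h^{\rm ECR})\phi_i^{\rm ECR}$ has zero mean), so \eqref{nablau4time} is trivial for the ECR element and your argument dispenses with it entirely; the paper's decomposition buys a template that transfers verbatim to the CR element, where the interpolation does not preserve means and the analogous term is genuinely nonzero, while your argument buys a shorter, ECR-specific proof.

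One quantitative caveat: your claim that replacing the four fine-element Hessian averages by the single coarse average costs $O(h^{7/2})$ is optimistic. The functional $D\mapsto \Pi^0_{K_h^l}D-\Pi^0_{K_{2h}}D$ annihilates constants but not linears, so the plain Lipschitz dependence of the quadratic form $\parallel P_K^{\rm RT}(\cdot)\parallel_{0,K}^2$ (whose coefficients $c_{ij}^{\rm RT}|_K$ scale like $h^4$) yields only $O(h^3\,|u|_{2,\Om}|u|_{3,\Om})$ for this mismatch; reaching $h^{7/2}$ would require exploiting the cancellation, on a uniform patch, of the linear parts $\nabla D\cdot(\bold{M}_{K_h^l}-\bold{M}_{K_{2h}})$ summed over the four congruent subtriangles, which you do not carry out. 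Since the target bound is $h^3|\ln h|^{1/2}|u|_{\frac{7}{2},\Om}^2$, the weaker $O(h^3)$ estimate already suffices, so this overstatement does not affect the correctness of your conclusion.
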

\begin{proof}
A similar identity of \eqref{commutId} holds for the ECR element, namely,
\begin{equation*}
\begin{split}
\lambda-\lambda^{\rm (ECR, E)}_h=&\parallel \nabla_h (u-u^{\rm (ECR, E)}_h)\parallel_{0,\Om}^2-2\lambda^{\rm (ECR, E)}_h(u-\Pi^{\rm ECR}_h u,u^{\rm (ECR, E)}_h)\\
&-\lambda^{\rm (ECR, E)}_h\parallel u-u^{\rm (ECR, E)}_h\parallel_{0,\Om}^2.
\end{split}
\end{equation*}
It follows from Lemma \ref{Th:extraECR} that
\begin{equation*}
\setlength{\abovedisplayskip}{6pt}
\setlength{\belowdisplayskip}{6pt}
\begin{split}
\lambda-\lambda^{\rm (ECR, E)}_h&=\parallel \nabla u-\Pi^{\rm RT}_h\nabla u \parallel_{0,\Om}^2-2\lambda(u-\Pi^{\rm ECR}_h u,\Pi_h^0 u)-2\lambda(u-\Pi^{\rm ECR}_h u,u-\Pi_h^0 u)\\
&-2(\lambda^{\rm (ECR, E)}_h-\lambda)(u-\Pi^{\rm ECR}_h u,u)-2\lambda^{\rm (ECR, E)}_h(u-\Pi^{\rm ECR}_h u,u^{\rm (ECR, E)}_h-u)\\
&+O(h^3|\ln h|^{1/2}|u|_{\frac{7}{2},\Om}^2).
\end{split}
\end{equation*}
By \eqref{ECR:est},
$$
|\lambda(u-\Pi^{\rm ECR}_h u,u-\Pi_h^0 u)|\lesssim h^3|u|_{2,\Om} |u|_{1,\Om},
$$
$$
\big |(u-\Pi^{\rm ECR}_h u,u^{\rm (ECR, E)}_h-u)\big |\lesssim \parallel u-\Pi^{\rm ECR}_h u \parallel_{0,\Om}\parallel u^{\rm (ECR, E)}_h-u\parallel_{0,\Om}\lesssim h^4|u|_{2,\Om}^2,
$$
$$
\big |(\lambda_h^{\rm (ECR, E)}-\lambda)(u -\Pi^{\rm ECR}_h u,u)\big |\lesssim |\lambda_h^{\rm (ECR, E)}-\lambda| \parallel u-\Pi^{\rm ECR}_h u \parallel_{0,\Om}\lesssim h^4|u|_{2,\Om}^2.
$$
As a consequence,
\begin{equation*}
\big |\lambda-\lambda^{\rm (ECR, E)}_h-\big (\parallel \nabla u-\Pi^{\rm RT}_h\nabla u \parallel_{0,\Om}^2-2\lambda (u -\Pi^{\rm ECR}_h u ,\Pi_h^0 u )\big )\big |\lesssim h^3|\ln h|^{1/2}|u |_{\frac{7}{2},\Om}^2.
\end{equation*}
Due to the Bramble-Hilbert lemma,  \eqref{identity:ecr} and \eqref{identity:RT},
\begin{equation}\label{ECRexpan}
\lambda^{\rm (ECR, E)}_h=\lambda -\sum_{K\in\cT_h}\big ( \parallel P_K^{\rm RT}(\nabla^2 u )\parallel_{0,K}^2 -2\lambda \Pi_K^0 u \int_K P^{\rm ECR}_K(\nabla^2 u )\dx\big )+O(h^3|\ln h|^{1/2}|u |_{\frac{7}{2},\Om}^2).
\end{equation}
Since
$
\parallel u -\Pi_K^0 u \parallel_{0,K}\lesssim h|u |_{1,K},
$
a combination of \eqref{ECRexpan} and Lemma \ref{Lm:4timesRT} concludes
$$
\big |\lambda -\frac{4\lambda^{\rm (ECR, E)}_h-\lambda^{\rm (ECR, E)}_{2h}}{3}\big |\lesssim h^3|\ln h|^{1/2}|u |_{\frac{7}{2},\Om}^2,
$$
which completes the proof.
\end{proof}

\subsection{Asymptotic expansions of eigenvalues by the CR element}
Let $u^{\rm (CR,\ \lambda\Pi_h^0 u)}_h\in V^{\rm CR}_h$ be the solution of the following source problem
\be\label{CRbdPipro}
(\nabla u^{\rm (CR,\ \lambda\Pi_h^0 u)}_h,\nabla v_h)=\lambda (\Pi_h^0 u ,v_h)\quad \text{ for any } v_h\in V^{\rm CR}_h,
\ee
and $u^{\rm (CR,\ \lambda u)}_h\in V^{\rm CR}_h$ be the solution of the source problem \eqref{CRbdpro}. The theory of nonconforming approximation \cite{ShiWangBook} gives the following lemma.
\begin{Lm}\label{Lm:crdiff}
Suppose that $(\lambda, u)$ is the eigenpair of \eqref{variance} with $u\in H^{2}(\Om,\mathbb{R})\cap H^1_0(\Om,\mathbb{R})$, $(\lambda^{\rm (CR, E)}_h, u^{\rm (CR, E)}_h)$ is the discrete eigenpair of  \eqref{discrete} in $V_h^{\rm CR}$ and $u^{\rm (CR,\ \lambda\Pi_h^0 u)}_h$ is the solution of \eqref{CRbdPipro} by the CR element, respectively. It holds that
\begin{equation*}\label{crdiff}
\parallel \nabla_h( u^{\rm (CR,\ \lambda\Pi_h^0 u)}_h - u^{\rm (CR, E)}_h)\parallel_{0,\Om}\lesssim h^{2}|u |_{2,\Om}.
\end{equation*}
\end{Lm}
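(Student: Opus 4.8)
The plan is to follow the same route used to obtain the superclose estimate \eqref{CRESdiff}, now with the piecewise constant load $\lambda\Pi_h^0 u$ in place of $\lambda u$. Writing $g_h:=u^{\rm (CR, E)}_h-u^{\rm (CR,\ \lambda\Pi_h^0 u)}_h\in V^{\rm CR}_h$ and subtracting \eqref{CRbdPipro} from the discrete eigenvalue equation \eqref{discrete}, I would test the resulting identity with $v_h=g_h$ to get
$$
\parallel \nabla_h g_h\parallel_{0,\Om}^2=\big(\lambda^{\rm (CR, E)}_h u^{\rm (CR, E)}_h-\lambda\Pi_h^0 u,\ g_h\big).
$$
Everything then reduces to estimating this single $L^2$ pairing, and the goal is to extract a full factor $h^2\parallel\nabla_h g_h\parallel_{0,\Om}$ so that, after dividing by $\parallel\nabla_h g_h\parallel_{0,\Om}$, the claim follows.

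I would split the right-hand side as $\big(\lambda^{\rm (CR, E)}_h u^{\rm (CR, E)}_h-\lambda u,\ g_h\big)+\lambda\big(u-\Pi_h^0 u,\ g_h\big)$. The first pairing is handled directly by \eqref{CRESdiff1}, which gives $\parallel\lambda^{\rm (CR, E)}_h u^{\rm (CR, E)}_h-\lambda u\parallel_{0,\Om}\lesssim h^2|u|_{2,\Om}$; combined with the Cauchy--Schwarz inequality and the discrete Poincar\'e--Friedrichs inequality $\parallel g_h\parallel_{0,\Om}\lesssim\parallel\nabla_h g_h\parallel_{0,\Om}$ valid on $V^{\rm CR}_h$, this term is bounded by $h^2|u|_{2,\Om}\parallel\nabla_h g_h\parallel_{0,\Om}$.

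The main obstacle is the second pairing $\lambda(u-\Pi_h^0 u, g_h)$: a naive Cauchy--Schwarz estimate only yields the factor $\parallel u-\Pi_h^0 u\parallel_{0,\Om}\lesssim h|u|_{1,\Om}$, i.e. a single power of $h$, which is insufficient. The key is to exploit that $\Pi_h^0$ is the $L^2$-orthogonal projection onto piecewise constants, so $(u-\Pi_h^0 u,\Pi_h^0 g_h)=0$ and hence $\lambda(u-\Pi_h^0 u, g_h)=\lambda(u-\Pi_h^0 u,\ g_h-\Pi_h^0 g_h)$. Since $g_h$ is piecewise linear, an elementwise Poincar\'e inequality gives $\parallel g_h-\Pi_h^0 g_h\parallel_{0,K}\lesssim h_K\parallel\nabla g_h\parallel_{0,K}$, so that $\parallel g_h-\Pi_h^0 g_h\parallel_{0,\Om}\lesssim h\parallel\nabla_h g_h\parallel_{0,\Om}$. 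This recovers the missing power of $h$ and bounds the second pairing by $h^2|u|_{1,\Om}\parallel\nabla_h g_h\parallel_{0,\Om}$.

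Collecting the two estimates gives $\parallel\nabla_h g_h\parallel_{0,\Om}^2\lesssim h^2|u|_{2,\Om}\parallel\nabla_h g_h\parallel_{0,\Om}$, and dividing through by $\parallel\nabla_h g_h\parallel_{0,\Om}$ (the case $\nabla_h g_h=0$ being trivial) yields the asserted bound. I expect the orthogonality-plus-elementwise-Poincar\'e step to be the crux, since it is exactly what upgrades the rate from $O(h)$ to the required $O(h^2)$; the remaining ingredients are the already-established estimate \eqref{CRESdiff1} and standard discrete inequalities for the CR space.
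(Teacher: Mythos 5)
Your proof is correct, and its crux---the $L^2$-orthogonality of $\Pi_h^0$ combined with the elementwise Poincar\'e inequality, which upgrades $\lambda(u-\Pi_h^0 u,\ g_h)$ from $O(h)$ to $O(h^2)$---is exactly the cancellation the paper exploits, written there as $(\lambda u-\lambda\Pi_h^0 u,\ v_h)=\lambda\big((I-\Pi_h^0)u,(I-\Pi_h^0)v_h\big)$. The organization differs: the paper introduces the intermediate source solution $u_h^{\rm (CR,\ \lambda u)}$ of \eqref{CRbdpro} and concludes by the triangle inequality, bounding $\nabla_h(u_h^{\rm (CR,\ \lambda\Pi_h^0 u)}-u_h^{\rm (CR,\ \lambda u)})$ via the orthogonality argument (its estimate \eqref{CRdiffest:1}) and $\nabla_h(u_h^{\rm (CR,\ \lambda u)}-u_h^{\rm (CR, E)})$ via the previously established superclose bound \eqref{CRESdiff}, whereas you merge the two steps into a single energy identity for $g_h=u_h^{\rm (CR, E)}-u_h^{\rm (CR,\ \lambda\Pi_h^0 u)}$ and split the load as $(\lambda_h^{\rm (CR, E)}u_h^{\rm (CR, E)}-\lambda u)+\lambda(u-\Pi_h^0 u)$, the first piece being handled by \eqref{CRESdiff1} just as in the paper (and \eqref{CRESdiff1} indeed only needs $u\in H^2$ through \eqref{CR:est} with $s=1$, matching the lemma's hypothesis). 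The one tool you invoke that the paper avoids is the discrete Poincar\'e--Friedrichs inequality $\parallel g_h\parallel_{0,\Om}\lesssim\parallel\nabla_h g_h\parallel_{0,\Om}$ on $V_h^{\rm CR}$---standard, and valid here because of the boundary conditions built into $V_h^{\rm CR}$---where the paper instead controls the relevant $L^2$ distance by passing through $u$ as in \eqref{CRESdiff2}; both routes give the same rate, and yours is marginally more economical in that it dispenses with the auxiliary discrete problem, at the cost of citing one extra standard inequality.
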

\begin{proof}
Due to \eqref{CRbdpro} and \eqref{CRbdPipro}, for any $v_h\in V_h^{\rm CR}$,
\begin{equation*}
\big (\nabla_h( u^{\rm (CR,\ \lambda u)}_h - u^{\rm (CR,\ \lambda\Pi_h^0 u)}_h), \nabla v_h\big )=\big (\lambda  u -\lambda \Pi_h^0 u  ,v_h\big )=\lambda \big((I-\Pi_h^0)u ,(I-\Pi_h^0)v_h\big ).
\end{equation*}
It follows from \eqref{Pi0est} that
\begin{equation*}
\begin{split}
\big |\big (\nabla_h( u^{\rm (CR,\ \lambda\Pi_h^0 u)}_h - u^{\rm (CR,\ \lambda u)}_h), \nabla v_h\big )\big |&\lesssim h^2|u |_{1,\Om}|v_h|_{1,h}\leq C h^4|u |_{1,\Om}^2+\frac{1}{2}|v_h|_{1,h}^2.
\end{split}
\end{equation*}
With $v_h=u^{\rm (CR,\ \lambda\Pi_h^0 u)}_h - u^{\rm (CR,\ \lambda u)}_h$ in the above inequality,
\be\label{CRdiffest:1}
\parallel \nabla_h( u^{\rm (CR,\ \lambda\Pi_h^0 u)}_h - u^{\rm (CR,\ \lambda u)}_h)\parallel_{0,\Om}\lesssim h^2|u |_{1,\Om}.
\ee
Thanks to \eqref{CRESdiff},
\be\label{CRdiffest:2}
\parallel \nabla_h (u^{\rm (CR,\ \lambda u)}_h - u^{\rm (CR, E)}_h)\parallel_{0,\Om}\lesssim h^2|u |_{2,\Om}.
\ee
A combination of \eqref{CRdiffest:1} and \eqref{CRdiffest:2} concludes
$$
\parallel \nabla_h( u^{\rm (CR,\ \lambda\Pi_h^0 u)}_h - u^{\rm (CR, E)}_h)\parallel_{0,\Om}\lesssim h^{2}|u |_{2,\Om},
$$
which completes the proof.
\end{proof}
The following special relation between the CR element and the RT element was analyzed in \cite{Marini1985An}
\begin{equation}\label{CRRT}
\sigma^{\rm (RT,\ \lambda u)}_h\big|_K=\nabla_h u^{\rm (CR,\ \lambda\Pi_h^0 u)}_h\big|_K-\frac{\lambda  \Pi_K^0 u }{2}(\bold{x}-\bold{M}_K),\quad \bold{x}\in K, \text{ for any }K\in\cT_h.
\end{equation}
It plays an important role in the analysis of asymptotic expansions of eigenvalues by the CR element. Let $(\sigma^{\rm (RT,\ \lambda u)}_h, u^{\rm (RT,\ \lambda u)}_h)\in \rm RT(\cT_h)\times U_{\rm RT} $ be the solution of \eqref{RTbdPro}, and $u^{\rm (CR,\ \lambda\Pi_h^0 u)}_h$ be the solution of \eqref{CRbdPipro}. By \eqref{CRRT}, a direct computation yields
\begin{equation}\label{CRRTL2}
\parallel \sigma^{\rm (RT,\ \lambda u)}_h- \nabla_h u^{\rm (CR,\ \lambda\Pi_h^0 u)}_h \parallel_{0,\Om}= \frac{\lambda }{12}\big (\sum_{K\in \cT_h}H_K\parallel \Pi_K^0 u \parallel_{0,K}^2\big )^{1/2},
\end{equation}
\begin{equation}\label{CRRTconstant}
\int_K (\sigma^{\rm (RT,\ \lambda u)}_h- \nabla_h u^{\rm (CR,\ \lambda\Pi_h^0 u)}_h)\dx =0\quad \text{ for any } K\in \cT_h.
\end{equation}

\begin{Lm}\label{Th:extraCR}
Suppose that $(\lambda , u )$ is the eigenpair of \eqref{variance} with $u \in H^{\frac{7}{2}}(\Om,\mathbb{R})\cap H^1_0(\Om,\mathbb{R})$, and $(\lambda^{\rm (CR, E)}_h, u^{\rm (CR, E)}_h)$ is the corresponding approximate eigenpair  of \eqref{discrete} by the CR element on an uniform triangulation $\cT_h$. Then,
\begin{equation*}
\begin{split}
\parallel \nabla u -\nabla_h u^{\rm (CR, E)}_h\parallel_{0,\Om}^2=&\sum_{K\in\cT_h}\parallel P_K^{\rm RT}(\nabla^2 u ) \parallel_{0,K}^2 + \frac{\lambda^2}{144}\sum_{K\in \cT_h}\parallel u \parallel_{0,K}^2|\partial K|^2\\
&-\lambda \sum_{K\in\cT_h}\int_K (\bold{x}-\bold{M}_K)P_K^{\rm RT}(\nabla^2 u )u \dx +O(h^{3}|\ln h|^{1/2}|u |_{\frac{7}{2},\Om}^2 ).
\end{split}
\end{equation*}
\end{Lm}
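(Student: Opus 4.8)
The plan is to pass from the discrete eigenfunction $u^{\rm (CR, E)}_h$ to the source-problem solution $u^{\rm (CR,\ \lambda\Pi_h^0 u)}_h$ of \eqref{CRbdPipro}, and then to exploit the special relation \eqref{CRRT} between the CR element and the RT element. Since Lemma \ref{Lm:crdiff} gives $\parallel\nabla_h(u^{\rm (CR,\ \lambda\Pi_h^0 u)}_h-u^{\rm (CR, E)}_h)\parallel_{0,\Om}\lesssim h^2|u|_{2,\Om}$, while $\parallel\nabla u-\nabla_h u^{\rm (CR,\ \lambda\Pi_h^0 u)}_h\parallel_{0,\Om}=O(h)$, expanding the square and using Cauchy--Schwarz shows
$$
\parallel\nabla u-\nabla_h u^{\rm (CR, E)}_h\parallel_{0,\Om}^2=\parallel\nabla u-\nabla_h u^{\rm (CR,\ \lambda\Pi_h^0 u)}_h\parallel_{0,\Om}^2+O(h^3|u|_{\frac{7}{2},\Om}^2).
$$
Then I would substitute \eqref{CRRT}, i.e. $\nabla_h u^{\rm (CR,\ \lambda\Pi_h^0 u)}_h=\sigma^{\rm (RT,\ \lambda u)}_h+\frac{\lambda\Pi_K^0 u}{2}(\bold{x}-\bold{M}_K)$ on each $K$, so that $\nabla u-\nabla_h u^{\rm (CR,\ \lambda\Pi_h^0 u)}_h=(\nabla u-\sigma^{\rm (RT,\ \lambda u)}_h)-\frac{\lambda\Pi_K^0 u}{2}(\bold{x}-\bold{M}_K)$, and expand the square into an RT part $\parallel\nabla u-\sigma^{\rm (RT,\ \lambda u)}_h\parallel^2$, a correction part $\parallel\frac{\lambda\Pi_K^0 u}{2}(\bold{x}-\bold{M}_K)\parallel^2$, and a cross term.

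For the RT part, since $\sigma^{\rm (RT,\ \lambda u)}_h$ is the RT solution of $\nabla u$, Theorem \ref{Lm:bdRT} gives $\parallel\Pi^{\rm RT}_h\nabla u-\sigma^{\rm (RT,\ \lambda u)}_h\parallel_{0,\Om}\lesssim h^2|\ln h|^{1/2}|u|_{\frac{7}{2},\Om}$; inserting this and using $\parallel\nabla u-\Pi^{\rm RT}_h\nabla u\parallel=O(h)$ replaces $\sigma^{\rm (RT,\ \lambda u)}_h$ by $\Pi^{\rm RT}_h\nabla u$ at the cost of $O(h^3|\ln h|^{1/2})$. A Bramble--Hilbert argument together with the interpolation identity \eqref{identity:RT} then turns $\parallel\nabla u-\Pi^{\rm RT}_h\nabla u\parallel_{0,\Om}^2$ into $\sum_{K}\parallel P_K^{\rm RT}(\nabla^2 u)\parallel_{0,K}^2$ modulo $O(h^3)$, producing the first term. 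For the correction part, I would invoke the already computed identity \eqref{CRRTL2}, which gives exactly $\frac{\lambda^2}{144}\sum_K H_K\parallel\Pi_K^0 u\parallel_{0,K}^2$; replacing $\parallel\Pi_K^0 u\parallel_{0,K}^2$ by $\parallel u\parallel_{0,K}^2$ costs only $O(h^4)$ because the prefactor $H_K$ is $O(h^2)$, yielding the second term.

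The main work is the cross term $-\lambda\sum_K\Pi_K^0 u\int_K(\nabla u-\sigma^{\rm (RT,\ \lambda u)}_h)^T(\bold{x}-\bold{M}_K)\dx$. Here I would perform three successive replacements: $\sigma^{\rm (RT,\ \lambda u)}_h\to\Pi^{\rm RT}_h\nabla u$, then $(\nabla u-\Pi^{\rm RT}_h\nabla u)\to P_K^{\rm RT}(\nabla^2 u)$ via Bramble--Hilbert and \eqref{identity:RT}, and finally $\Pi_K^0 u\to u$, arriving at $-\lambda\sum_K\int_K u\,(\bold{x}-\bold{M}_K)^TP_K^{\rm RT}(\nabla^2 u)\dx$, the third term. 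The crucial point making each remainder $O(h^3)$ is the gain of one power of $h$ from the factor $(\bold{x}-\bold{M}_K)$, since $\parallel\bold{x}-\bold{M}_K\parallel_{0,\Om}=O(h)$; combined with the RT superconvergence bound $O(h^2|\ln h|^{1/2})$ and the per-element $O(h^2|u|_{3,K})$ Bramble--Hilbert discrepancy between $(I-\Pi^{\rm RT}_h)\nabla u$ and $P_K^{\rm RT}(\nabla^2 u)$, a global Cauchy--Schwarz over the elements keeps everything at $O(h^3|\ln h|^{1/2}|u|_{\frac{7}{2},\Om}^2)$. I expect this cross term to be the main obstacle: unlike the ECR case of Lemma \ref{Th:extraECR}, the correction $\frac{\lambda\Pi_K^0 u}{2}(\bold{x}-\bold{M}_K)$ does not vanish, so one must simultaneously control the RT superconvergence error and the Bramble--Hilbert error under a single integral weighted by $(\bold{x}-\bold{M}_K)$, and verify that the two approximations do not interact to destroy the $h^3$ accuracy.
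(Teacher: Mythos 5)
Your proposal is correct and follows essentially the same route as the paper's proof: both pass to the source-problem solution $u^{\rm (CR,\ \lambda\Pi_h^0 u)}_h$ via Lemma \ref{Lm:crdiff}, exploit the CR--RT relation \eqref{CRRT} together with the RT superconvergence of Theorem \ref{Lm:bdRT}, and identify the three leading terms through \eqref{CR:5}--type Bramble--Hilbert arguments, the identity \eqref{CRRTL2}, and the weighted cross-term estimate. The only difference is organizational (you insert the intermediate quantities sequentially, while the paper expands all of them at once in \eqref{CRtotal0} and then discards the negligible cross terms), and your accounting of each remainder as $O(h^3|\ln h|^{1/2}|u|_{\frac{7}{2},\Om}^2)$ matches the paper's.
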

\begin{proof}
A similar procedure of \eqref{ECRtotal1} yields
\begin{equation}\label{CRtotal0}
\begin{split}
\parallel \nabla u -\nabla_h u^{\rm (CR, E)}_h\parallel_{0,\Om}^2&=\parallel \nabla u -\Pi^{\rm RT}_h\nabla u  \parallel_{0,\Om}^2 + \parallel \Pi^{\rm RT}_h\nabla u -\sigma^{\rm (RT,\ \lambda u)}_h \parallel_{0,\Om}^2 \\
+& \parallel \sigma^{\rm (RT,\ \lambda u)}_h- \nabla_h u^{\rm (CR,\ \lambda\Pi_h^0 u)}_h\parallel_{0,\Om}^2+\parallel \nabla_h u^{\rm (CR,\ \lambda\Pi_h^0 u)}_h- \nabla_h u^{\rm (CR, E)}_h\parallel_{0,\Om}^2\\
+& 2(\nabla u -\Pi^{\rm RT}_h\nabla u , \Pi^{\rm RT}_h\nabla u -\sigma^{\rm (RT,\ \lambda u)}_h)\\
+&2(\nabla u -\Pi^{\rm RT}_h\nabla u , \sigma^{\rm (RT,\ \lambda u)}_h-\nabla_h u^{\rm (CR,\ \lambda\Pi_h^0 u)}_h)\\
+& 2(\nabla u -\Pi^{\rm RT}_h \nabla u , \nabla_h u^{\rm (CR,\ \lambda\Pi_h^0 u)}_h-\nabla_h u^{\rm (CR, E)}_h)\\
+&2(\Pi^{\rm RT}_h\nabla u -\sigma^{\rm (RT,\ \lambda u)}_h,\sigma^{\rm (RT,\ \lambda u)}_h- \nabla_h u^{\rm (CR, E)}_h)\\
+&2(\sigma^{\rm (RT,\ \lambda u)}_h- \nabla_h u^{\rm (CR,\ \lambda\Pi_h^0 u)}_h, \nabla_h u^{\rm (CR,\ \lambda\Pi_h^0 u)}_h-\nabla_h u^{\rm (CR, E)}_h).
\end{split}
\end{equation}
It follows from the error estimates in \eqref{CR:est} and \eqref{RT:est} for the solution  by the CR element and the RT element that
\begin{equation}\label{oneorder}
\parallel \sigma^{\rm (RT,\ \lambda u)}_h- \nabla_h u^{\rm (CR, E)}_h \parallel_{0,\Om}\lesssim h|u |_{2,\Om}.
\end{equation}
Since $\sigma^{\rm (RT,\ \lambda u)}_h$ is the RT element solution of $\sigma^{\rm (\lambda u,\ S)}=\nabla u $, the superconvergence of the RT element in Theorem \ref{Lm:bdRT} reads
\begin{equation}\label{CR1}
\parallel \Pi^{\rm RT}_h\nabla u -\sigma^{\rm (RT,\ \lambda u)}_h \parallel_{0,\Om}\lesssim h^2|\ln h|^{1/2}|u |_{\frac{7}{2},\Om},
\end{equation}
which leads to
\begin{equation}\label{CR2}
\big |(\nabla u -\Pi^{\rm RT}_h\nabla u , \Pi^{\rm RT}_h\nabla u -\sigma^{\rm (RT,\ \lambda u)}_h)\big |\lesssim h^{3}|\ln h|^{1/2}|u |_{\frac{7}{2},\Om}|u |_{2,\Om},
\end{equation}
\begin{equation}\label{CR3}
\big |(\Pi^{\rm RT}_h\nabla u -\sigma^{\rm (RT,\ \lambda u)}_h,\sigma^{\rm (RT,\ \lambda u)}_h- \nabla_h u^{\rm (CR, E)}_h)\big |\lesssim h^{3}|\ln h|^{1/2}|u |_{\frac{7}{2},\Om}|u |_{2,\Om}.
\end{equation}
For the difference between the solution of the eigenvalue problem \eqref{discrete} and the source problem \eqref{CRbdPipro} by the CR element, it follows from Lemma \ref{Lm:crdiff} that
\begin{equation}\label{CR4}
\parallel \nabla_h u^{\rm (CR,\ \lambda\Pi_h^0 u)}_h- \nabla_h u^{\rm (CR, E)}_h\parallel_{0,\Om}\lesssim h^2|u |_{2,\Om}.
\end{equation}
This superconvergence result leads to
\begin{equation}\label{CR5}
\big | (\nabla u -\Pi^{\rm RT}_h\nabla u , \nabla_h u^{\rm (CR,\ \lambda\Pi_h^0 u)}_h -\nabla_h u^{\rm (CR, E)}_h)\big |\lesssim h^{3}|u |_{2,\Om}^2,
\end{equation}
\begin{equation}\label{CR6}
\big |(\sigma^{\rm (RT,\ \lambda u)}_h- \nabla_h u^{\rm (CR,\ \lambda\Pi_h^0 u)}_h, \nabla_h u^{\rm (CR,\ \lambda\Pi_h^0 u)}_h-\nabla_h u^{\rm (CR, E)}_h)\big |\lesssim h^3|u |_{2,\Om}^2.
\end{equation}
A substitution of \eqref{CR1}, \eqref{CR2}, \eqref{CR3}, \eqref{CR4}, \eqref{CR5}, \eqref{CR6} into \eqref{CRtotal0} yields
\begin{equation}\label{CRtotal}
\begin{split}
\parallel \nabla u -\nabla_h u^{\rm (CR, E)}_h\parallel_{0,\Om}^2=&\parallel \nabla u -\Pi^{\rm RT}_h\nabla u  \parallel_{0,\Om}^2 +\parallel \sigma^{\rm (RT,\ \lambda u)}_h- \nabla_h u^{\rm (CR,\ \lambda\Pi_h^0 u)}_h\parallel_{0,\Om}^2\\
&+2(\nabla u -\Pi^{\rm RT}_h\nabla u , \sigma^{\rm (RT,\ \lambda u)}_h-\nabla_h u^{\rm (CR,\ \lambda\Pi_h^0 u)}_h)+O(h^{3}|\ln h|^{1/2}|u |_{\frac{7}{2},\Om}^2 ).
\end{split}
\end{equation}
By the Taylor expansion of the interpolation error in \eqref{identity:RT} and the Bramble-Hilbert lemma,
\begin{equation}\label{CR:5}
\parallel \nabla u -\Pi^{\rm RT}_h\nabla u  \parallel_{0,\Om}^2= \sum_{K\in\cT_h}\parallel P_K^{\rm RT}(\nabla^2 u ) \parallel_{0,K}^2+O(h^3|u |_{3,\Om}^2 ).
\end{equation}
Due to the special relation between the CR element and the RT element in \eqref{CRRT}, a combination of \eqref{Pi0est} and \eqref{CRRTL2} yields
\begin{equation}\label{CR:6}
\parallel \sigma^{\rm (RT,\ \lambda u)}_h- \nabla_h u^{\rm (CR,\ \lambda\Pi_h^0 u)}_h\parallel_{0,\Om}^2=\frac{\lambda^2}{144}\sum_{K\in \cT_h}H_K\parallel u \parallel_{0,K}^2 +O(h^4|u |_{1,\Om}^2),
\end{equation}
with $H_K=\sum_{i=1}^3|e_i|^2$. By the Bramble-Hilbert lemma and \eqref{identity:RT}, \eqref{CRRT},
\begin{equation}\label{CR:7}
(\nabla u -\Pi^{\rm RT}_h\nabla u , \sigma^{\rm (RT,\ \lambda u)}_h-\nabla_h u^{\rm (CR,\ \lambda\Pi_h^0 u)}_h)=-\frac{\lambda }{2}\sum_{K\in\cT_h}\int_K (\bold{x}-\bold{M}_K)P_K^{\rm RT}(\nabla^2 u )u \dx+O(h^3|u |_{3,\Om}^2 ).
\end{equation}
A substitution of \eqref{CR:5}, \eqref{CR:6}, \eqref{CR:7} into \eqref{CRtotal} concludes
\begin{equation*}
\begin{split}
\parallel \nabla u -\nabla_h u^{\rm (CR, E)}_h\parallel_{0,\Om}^2=&\sum_{K\in\cT_h}\parallel P_K^{\rm RT}(\nabla^2 u ) \parallel_{0,K}^2 + \frac{\lambda^2}{144}\sum_{K\in \cT_h}H_K\parallel u \parallel_{0,K}^2\\
&-\lambda \sum_{K\in\cT_h}\int_K (\bold{x}-\bold{M}_K)P_K^{\rm RT}(\nabla^2 u )u \dx +O(h^{3}|\ln h|^{1/2}|u |_{\frac{7}{2},\Om}^2 ),
\end{split}
\end{equation*}
which completes the proof.
\end{proof}
By a similar proof for Theorem \ref{ECR:extra}, asymptotic expansions of eigenvalues by the CR element are established in the following theorem.
\begin{Th}
Suppose that $(\lambda , u )$ is the eigenpair of \eqref{variance} with $u \in H^{\frac{7}{2}}(\Om,\mathbb{R})\cap H^1_0(\Om,\mathbb{R})$, and $(\lambda^{\rm (CR, E)}_h, u^{\rm (CR, E)}_h)$ is the corresponding approximate eigenpair of \eqref{discrete} by the CR element on an uniform triangulation $\cT_h$. It holds that
\begin{equation*}
\begin{split}
\lambda^{\rm (CR, E)}_h=&\lambda -\sum_{K\in\cT_h} \big (\parallel P_K^{\rm RT}(\nabla^2 u )\parallel_{0,K}^2 -2\lambda ( P^{\rm CR}_K(\nabla^2 u ),u )\big )-\frac{\lambda^2}{144}\sum_{K\in \cT_h}H_K\parallel u \parallel_{0,K}^2\\
&+\lambda \sum_{K\in\cT_h}\int_K (\bold{x}-\bold{M}_K)P_K^{\rm RT}(\nabla^2 u )u \dx+O( h^{3}|\ln h|^{1/2}|u |_{\frac{7}{2},\Om}^2 ).
\end{split}
\end{equation*}
Furthermore,
$$
\big |\lambda -\lambda_{\rm EXP}^{\rm CR}\big |\lesssim h^{3}|\ln h|^{1/2}|u |_{\frac{7}{2},\Om}^2,
$$
where the extrapolation eigenvalue $\lambda_{\rm EXP}^{\rm CR}=\frac{4\lambda^{\rm (CR, E)}_h-\lambda^{\rm (CR, E)}_{2h}}{3}$.
\end{Th}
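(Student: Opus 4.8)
The plan is to follow the proof of Theorem \ref{ECR:extra} line by line, substituting the sharper energy expansion of Lemma \ref{Th:extraCR} in place of Lemma \ref{Th:extraECR} and carrying along the two geometric sums that are special to the CR element. First I would start from the commuting identity \eqref{commutId},
\[
\lambda-\lambda_h^{\rm (CR, E)}=\parallel\nabla_h(u-u_h^{\rm (CR, E)})\parallel_{0,\Om}^2-2\lambda_h^{\rm (CR, E)}(u-\Pi_h^{\rm CR}u,u_h^{\rm (CR, E)})-\lambda_h^{\rm (CR, E)}\parallel u-u_h^{\rm (CR, E)}\parallel_{0,\Om}^2,
\]
and substitute Lemma \ref{Th:extraCR} for the first term on the right. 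The last term is $O(h^4)$ by \eqref{CR:est}, and in the middle term I would peel off the perturbations $\lambda_h^{\rm (CR, E)}\to\lambda$ and $u_h^{\rm (CR, E)}\to u$; the two remainders $(u-\Pi_h^{\rm CR}u,u_h^{\rm (CR, E)}-u)$ and $(\lambda_h^{\rm (CR, E)}-\lambda)(u-\Pi_h^{\rm CR}u,u)$ are each $O(h^4)$, since $\parallel u-\Pi_h^{\rm CR}u\parallel_{0,\Om}\lesssim h^2|u|_{2,\Om}$, $\parallel u-u_h^{\rm (CR, E)}\parallel_{0,\Om}\lesssim h^2|u|_{2,\Om}$ and $|\lambda_h^{\rm (CR, E)}-\lambda|\lesssim h^2$.

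This reduces the identity to the genuine contribution $-2\lambda(u-\Pi_h^{\rm CR}u,u)$. Using the local Taylor identity \eqref{identity:cr} together with the Bramble-Hilbert lemma I would replace $(I-\Pi_h^{\rm CR})u|_K$ by $P_K^{\rm CR}(\nabla^2 u)$ at the cost of an $O(h^3|u|_{3,\Om})$ remainder (one power of $h$ beyond the $O(h^2)$ leading size, estimated by Cauchy-Schwarz against $u$), so that $-2\lambda(u-\Pi_h^{\rm CR}u,u)=-2\lambda\sum_{K\in\cT_h}(P_K^{\rm CR}(\nabla^2 u),u)_{0,K}+O(h^3)$. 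Combining this with the three leading sums furnished by Lemma \ref{Th:extraCR} and solving for $\lambda_h^{\rm (CR, E)}$ produces exactly the asserted expansion, the $\parallel P_K^{\rm RT}(\nabla^2 u)\parallel_{0,K}^2$, $H_K\parallel u\parallel_{0,K}^2$ and $\int_K(\bold{x}-\bold{M}_K)P_K^{\rm RT}(\nabla^2 u)u\dx$ terms being carried over verbatim.

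For the extrapolation estimate I would write $\lambda_h^{\rm (CR, E)}=\lambda-G(h)+O(h^3|\ln h|^{1/2}|u|_{\frac{7}{2},\Om}^2)$, where $G(h)$ collects the four leading sums above, and show the refinement scaling $G(2h)=4G(h)+O(h^3)$. The $\parallel P_K^{\rm RT}\parallel_{0,K}^2$-sum scales by the factor $4$ through \eqref{RT4time} in Lemma \ref{Lm:4timesRT}; the sum $\frac{\lambda^2}{144}\sum_K H_K\parallel u\parallel_{0,K}^2$ scales by exactly $4$ because each child triangle of a uniform refinement is similar to its parent with ratio $\frac{1}{2}$, so that $H_{K_{2h}}=4H_{K_h^l}$ for every $l$ while $\parallel u\parallel_{0,K_{2h}}^2=\sum_l\parallel u\parallel_{0,K_h^l}^2$; and the two remaining sums $\sum_K(P_K^{\rm CR}(\nabla^2 u),u)_{0,K}$ and $\sum_K\int_K(\bold{x}-\bold{M}_K)P_K^{\rm RT}(\nabla^2 u)u\dx$ scale by $4$ up to $O(h^3)$ once $u$ is frozen at its element mean, the discrepancy being controlled by $\parallel u-\Pi_K^0 u\parallel_{0,K}\lesssim h|u|_{1,K}$ and the residual geometric factor then handled by the scaling argument of Lemma \ref{Lm:4timesRT} (the analogue of \eqref{RT4time} for these integrals). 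Once $G(2h)=4G(h)+O(h^3)$ is in hand, forming $\frac{4\lambda_h^{\rm (CR, E)}-\lambda_{2h}^{\rm (CR, E)}}{3}$ cancels $G$ and leaves $\lambda+O(h^3|\ln h|^{1/2}|u|_{\frac{7}{2},\Om}^2)$, which is the claim.

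The step I expect to be the main obstacle is the refinement scaling of the mixed sum $\lambda\sum_K\int_K(\bold{x}-\bold{M}_K)P_K^{\rm RT}(\nabla^2 u)u\dx$: unlike the pure $P^{\rm RT}$ and $H_K$ sums, here the extra weight $(\bold{x}-\bold{M}_K)$ makes the centroid shift $\bold{M}_{K_h^l}-\bold{M}_{K_{2h}}=O(h)$ between parent and child interact with $P_K^{\rm RT}$, so the clean factor-$4$ identity of Lemma \ref{Lm:4timesRT} must be re-derived for this weighted integral, and the $O(h)$ discrepancies in both $(\bold{x}-\bold{M}_K)$ and $\Pi_K^0 u$ must be shown to aggregate only to the $O(h^3)$ level after summation over $\cT_h$.
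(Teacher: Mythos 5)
Your proposal is correct and follows essentially the same route as the paper: the paper proves this theorem simply ``by a similar proof for Theorem \ref{ECR:extra}'', i.e.\ the commuting identity \eqref{commutId}, the energy expansion of Lemma \ref{Th:extraCR}, $O(h^4)$ control of the perturbation terms via \eqref{CR:est}, Bramble--Hilbert with the Taylor identity \eqref{identity:cr}, and a factor-$4$ refinement scaling in the spirit of Lemma \ref{Lm:4timesRT} to cancel the leading sums in $\frac{4\lambda_h^{\rm (CR,E)}-\lambda_{2h}^{\rm (CR,E)}}{3}$. You have in fact spelled out more than the paper does, correctly flagging that the scaling identities for the CR-specific sums (the $H_K\parallel u\parallel_{0,K}^2$ term and the weighted integral $\int_K(\bold{x}-\bold{M}_K)P_K^{\rm RT}(\nabla^2 u)\,u\dx$) are analogues of \eqref{RT4time}--\eqref{nablau4time} that must be re-derived, which the paper leaves implicit.
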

\begin{remark}
In \cite{Hu2016Superconvergence}, the superconvergence of the Hellan-Herrmann-Johnson element was analyzed. Since the Morley element is equivalent to the Hellan-Herrmann-Johnson element \cite{Hu2016Superconvergence}, for forth order elliptic eigenvalue problems, asymptotic expansions of eigenvalues by the Morley element can be established and employed to achieve high accuracy extrapolation methods following a similar procedure.
\end{remark}

\section{Asymptotically exact a posteriori error estimators}\label{sec:errorestimate}
In this section, for second order elliptic eigenvalue problems, asymptotically exact a posteriori error estimators of eigenvalues are constructed and analyzed for the CR element and the ECR element.

For eigenvalues of the Laplacian operator solved by the conforming linear element, asymptotically exact a posteriori error estimators were constructed  in \cite{Zhang2006Enhancing}. It is based on a simple identity
$$
\lambda_h -\lambda =\parallel \nabla_h (u -u_h )\parallel_{0,\Om}^2-\lambda  \parallel u -u_h \parallel_{0,\Om}^2.
$$
Since the second term on the right side of the above identity is of higher order, new approximate eigenvalues with high accuracy can be obtained by the gradient recovery techniques \cite{Zhang2005A,Zienkiewicz1992The,huang2010superconvergent}.

For nonconforming elements of second order elliptic eigenvalue problems, the identity becomes
\begin{equation*}
\begin{split}
\lambda -\lambda_h =a_h(u -u_h ,u -u_h )+2 a_h(u ,u_h )-2\lambda_h (u ,u_h )-\lambda_h (u -u_h ,u -u_h ).
\end{split}
\end{equation*}
Compared to conforming elements, the extra term
$$
a_h(u ,u_h )-\lambda_h (u ,u_h )
$$
for nonconforming elements relates to functions themselves. For the CR element and the ECR element, their canonical interpolations of eigenfunctions are employed here to approximate this term with high accuracy by the gradient recovery techniques. To be specific, for the CR element, thanks to the commuting property of the canonical interpolation operator $\Pi_h^{\rm CR}$ in \eqref{commuting},
\begin{equation*}
\begin{split}
\lambda -\lambda_h^{\rm (CR, E)}=&a_h(u -u_h^{\rm (CR, E)},u -u_h^{\rm (CR, E)})-2\lambda_h^{\rm (CR, E)}(u -\Pi^{\rm CR}_h u ,u_h^{\rm (CR, E)})\\
&-\lambda_h^{\rm (CR, E)}(u -u_h^{\rm (CR, E)},u -u_h^{\rm (CR, E)}).
\end{split}
\end{equation*}
The term $a_h(u -u_h^{\rm (CR, E)},u -u_h^{\rm (CR, E)})$ can be approximated with high accuracy by the gradient recovery techniques. Meanwhile, according to Lemma \ref{Lm:cr}, the interpolation error $(I-\Pi^{\rm CR}_h)w $ of any quadratic function $w $ can be expressed in terms of only the second order derivatives of $w $. Therefore, the extra term $\lambda_h^{\rm (CR, E)}(u -\Pi^{\rm (CR, E)}_h u ,u_h^{\rm (CR, E)})$ can also be approximated with high accuracy by the gradient recovery techniques. High accuracy approximate eigenvalues by the ECR element can also be obtained following a similar procedure.

Define the following a posteriori error estimators
\begin{equation}\label{FdefineCR}
F^{\rm CR}_h=\parallel K_h\nabla_h u_h^{\rm (CR, E)} -\nabla_h u^{\rm (CR, E)}_h\parallel_{0,\Om}^2-2\lambda^{\rm (CR, E)}_h \sum_{K\in\cT_h} \int_K P^{\rm CR}_K(\nabla_h K_h\nabla_h u_h^{\rm (CR, E)}) u^{\rm (CR, E)}_h\dx,
\end{equation}
\begin{equation}\label{FdefineECR}
F^{\rm ECR}_h=\parallel K_h\nabla_h u_h^{\rm (ECR, E)} -\nabla_h u^{\rm (ECR, E)}_h\parallel_{0,\Om}^2-2\lambda^{\rm (ECR, E)}_h \sum_{K\in\cT_h} \int_K P^{\rm ECR}_K(\nabla_h K_h\nabla_h u_h^{\rm (ECR, E)}) u^{\rm (ECR, E)}_h\dx.
\end{equation}

\begin{Lm}\label{highorderestimate}
Let $(\lambda , u )$ be the eigenpair of \eqref{variance} with $u \in H^{\frac{7}{2}}(\Om,\mathbb{R})\cap H^1_0(\Om,\mathbb{R})$, and $(\lambda^{\rm (CR, E)}_h,u^{\rm (CR, E)}_h)$ be the corresponding approximate eigenpair of \eqref{discrete} in $V^{\rm CR}_h$. It holds that
\begin{equation*}
\parallel \nabla^{2} u -\nabla_h K_h\nabla_h u_h^{\rm (CR, E)}\parallel_{0,\Om}\lesssim h|\ln h|^{1/2}|u |_{\frac{7}{2},\Om}.
\end{equation*}
\end{Lm}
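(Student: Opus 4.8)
The plan is to insert a piecewise-linear intermediate object between the Hessian $\nabla^2 u$ and the doubly recovered gradient $\nabla_h K_h\nabla_h u_h^{\rm (CR, E)}$, namely the componentwise canonical CR interpolation of the exact gradient, which I denote by $\Pi_h^{\rm CR}\nabla u:=(\Pi_h^{\rm CR}\partial_{x_1}u,\Pi_h^{\rm CR}\partial_{x_2}u)$. Both $\Pi_h^{\rm CR}\nabla u$ and $K_h\nabla_h u_h^{\rm (CR, E)}$ lie in $V_h^{\rm CR}\times V_h^{\rm CR}$ and are therefore piecewise linear; this structural fact is what drives the whole argument. I would then split
\begin{equation*}
\nabla^2 u-\nabla_h K_h\nabla_h u_h^{\rm (CR, E)}=\big(\nabla^2 u-\nabla_h\Pi_h^{\rm CR}\nabla u\big)+\nabla_h\big(\Pi_h^{\rm CR}\nabla u-K_h\nabla_h u_h^{\rm (CR, E)}\big)
\end{equation*}
and estimate the two terms separately.

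For the first term I would invoke the classical CR interpolation estimate $\parallel\nabla_h(w-\Pi_h^{\rm CR}w)\parallel_{0,K}\lesssim h_K|w|_{2,K}$ from \cite{Crouzeix1973Conforming}, applied componentwise with $w=\partial_{x_j}u$. Since the $j$th row of $\nabla^2 u$ is $\nabla\partial_{x_j}u$ and the $j$th row of $\nabla_h\Pi_h^{\rm CR}\nabla u$ is $\nabla_h\Pi_h^{\rm CR}\partial_{x_j}u$, summing over the two components gives $\parallel\nabla^2 u-\nabla_h\Pi_h^{\rm CR}\nabla u\parallel_{0,\Om}\lesssim h|u|_{3,\Om}\lesssim h|u|_{\frac{7}{2},\Om}$.

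The second term is where the one-order loss is paid. Because $\Pi_h^{\rm CR}\nabla u-K_h\nabla_h u_h^{\rm (CR, E)}$ is piecewise linear, the inverse inequality yields elementwise, and then globally on the quasi-uniform (indeed uniform) mesh,
\begin{equation*}
\parallel\nabla_h(\Pi_h^{\rm CR}\nabla u-K_h\nabla_h u_h^{\rm (CR, E)})\parallel_{0,\Om}\lesssim h^{-1}\parallel\Pi_h^{\rm CR}\nabla u-K_h\nabla_h u_h^{\rm (CR, E)}\parallel_{0,\Om}.
\end{equation*}
The remaining $L^2$ factor I would bound by the triangle inequality through $\nabla u$: the $L^2$ CR interpolation estimate gives $\parallel\nabla u-\Pi_h^{\rm CR}\nabla u\parallel_{0,\Om}\lesssim h^2|u|_{3,\Om}$, while Theorem \ref{supereig1} supplies the superclose bound $\parallel\nabla u-K_h\nabla_h u_h^{\rm (CR, E)}\parallel_{0,\Om}\lesssim h^2|\ln h|^{1/2}|u|_{\frac{7}{2},\Om}$. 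Hence the $L^2$ factor is $\lesssim h^2|\ln h|^{1/2}|u|_{\frac{7}{2},\Om}$, and after multiplication by $h^{-1}$ the second term contributes $\lesssim h|\ln h|^{1/2}|u|_{\frac{7}{2},\Om}$. Combining the two estimates yields the claim.

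The main obstacle, and the crux of the argument, is this second term: naively applying $\nabla_h$ to a recovered quantity loses a full power of $h$, so the estimate only closes because the recovered gradient is superconvergent to $\nabla u$ at order $h^2$ (Theorem \ref{supereig1}) rather than merely at the optimal order $h$. I would also need to confirm the two auxiliary facts that legitimize the inverse-inequality step: first, that $K_h\nabla_h u_h^{\rm (CR, E)}$ is genuinely a vector of CR functions, hence of bounded polynomial degree, which is immediate from Definition \ref{Def:R}; and second, that the uniformity hypothesis on $\cT_h$ makes the mesh quasi-uniform, so that the global constant in the inverse inequality is a uniform multiple of $h^{-1}$. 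The factor $|\ln h|^{1/2}$ is inherited unchanged from Theorem \ref{supereig1}.
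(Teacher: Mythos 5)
Your proof is correct and takes essentially the same route as the paper's: an intermediate piecewise-polynomial approximant of $\nabla u$, a triangle inequality, an inverse inequality to trade one power of $h$ for the $L^2$ distance, and Theorem~\ref{supereig1} to supply the $h^2|\ln h|^{1/2}$ supercloseness of $K_h\nabla_h u_h^{\rm (CR,\,E)}$ to $\nabla u$. The only difference is the choice of intermediate object --- the paper uses $\nabla_h \Pi_h^2 u$, the gradient of the piecewise-quadratic Lagrange interpolant of $u$, whereas you use the componentwise CR interpolant $\Pi_h^{\rm CR}\nabla u$ of the exact gradient --- and since both enjoy the same $O(h)$ Hessian-approximation and $O(h^2)$ $L^2$ gradient-approximation properties, the two arguments are interchangeable.
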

\begin{proof}
Let $ \Pi_h^2 u $ be the second order Lagrangian interpolation of $u $, namely, the interpolation $ \Pi_h^2 u$ is a piecewise quadratic function over $\cT_h$ and admits the same value as $u $ at the vertices of each element and the midpoint of each edge. It follows from the theory in \cite{ShiWangBook} that
\begin{equation}\label{polationerr}
\big| u -\Pi_h^2 u \big|_{i,\Om}\lesssim h^{3-i}|u |_{3,\Om},\ 0\leq i\leq 2.
\end{equation}
Due to the triangle inequality,
\begin{equation}\label{uRHu2014Lowerhtotal}
\parallel \nabla^2 u -\nabla_h K_h\nabla_h u_h^{\rm (CR, E)}\parallel_{0,\Om}\leq \parallel \nabla^2 u -\nabla^2_h \Pi_h^2 u \parallel_{0,\Om} + \parallel \nabla^2_h \Pi_h^2 u -\nabla_h K_h\nabla_h u_h^{\rm (CR, E)}\parallel_{0,\Om}.
\end{equation}
By the inverse inequality,
\begin{equation}\label{inverseineq}
\parallel \nabla^{2}_h \Pi_h^{2} u -\nabla_h K_h\nabla_h u_h^{\rm (CR, E)}\parallel_{0,\Om}\lesssim h^{-1}\parallel \nabla_h \Pi_h^{2} u -K_h\nabla_h u_h^{\rm (CR, E)} \parallel_{0,\Om}.
\end{equation}
A combination of \eqref{polationerr}, \eqref{inverseineq} and Theorem \ref{supereig1} yields
\begin{equation}\label{Rhinterpolation}
\begin{split}
\parallel\nabla^{2}_h\Pi_h^{2} u -\nabla_h K_h\nabla_h u_h^{\rm (CR, E)}\parallel_{0,\Om}&\lesssim h^{-1}\parallel \nabla_h \Pi_h^{2} u -\nabla u  \parallel_{0,\Om}+h^{-1}\parallel \nabla u -K_h\nabla_h u_h^{\rm (CR, E)} \parallel_{0,\Om}\\
&\lesssim h|\ln h|^{1/2}|u |_{\frac{7}{2},\Om}.
\end{split}
\end{equation}
A substitution of \eqref{polationerr} and \eqref{Rhinterpolation} into \eqref{uRHu2014Lowerhtotal} concludes
\begin{equation*}
\parallel \nabla^{2} u -\nabla_h K_h\nabla_h u_h^{\rm (CR, E)}\parallel_{0,\Om}\lesssim h|\ln h|^{1/2}|u |_{\frac{7}{2},\Om},
\end{equation*}
which completes the proof.
\qed
\end{proof}

The following theorem shows that the a posteriori error estimator $F^{\rm CR}_h$ in \eqref{FdefineCR} is asymptotically exact.

\begin{Th}\label{Th:cr}
Let $(\lambda , u )$ be the eigenpair of \eqref{variance} with $u \in H^{\frac{7}{2}}(\Om,\mathbb{R})\cap H^1_0(\Om,\mathbb{R})$, and $(\lambda^{\rm (CR, E)}_h,u^{\rm (CR, E)}_h)$ be the corresponding approximate eigenpair of \eqref{discrete} in $V^{\rm CR}_h$. The a posteriori error estimator $F_h^{\rm CR}$ in \eqref{FdefineCR} satisfies
$$
\big |\lambda -\lambda^{\rm (CR, E)}_h-F^{\rm CR}_h\big |\lesssim h^{3}|\ln h|^{1/2}|u |_{\frac{7}{2},\Om}^2.
$$
\end{Th}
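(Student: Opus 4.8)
The plan is to start from the commuting-property identity \eqref{commutId}, which, writing $u_h:=u^{\rm (CR, E)}_h$ and $\lambda_h:=\lambda^{\rm (CR, E)}_h$ for brevity, reads
$$
\lambda-\lambda_h=\parallel\nabla_h(u-u_h)\parallel_{0,\Om}^2-2\lambda_h(u-\Pi^{\rm CR}_h u,u_h)-\lambda_h\parallel u-u_h\parallel_{0,\Om}^2 .
$$
Subtracting $F^{\rm CR}_h$ from \eqref{FdefineCR} term by term, I would decompose $(\lambda-\lambda_h)-F^{\rm CR}_h$ into three pieces: (i) the energy-error discrepancy $\parallel\nabla_h(u-u_h)\parallel_{0,\Om}^2-\parallel K_h\nabla_h u_h-\nabla_h u_h\parallel_{0,\Om}^2$; (ii) the consistency discrepancy $-2\lambda_h\big[(u-\Pi^{\rm CR}_h u,u_h)-\sum_{K\in\cT_h}\int_K P^{\rm CR}_K(\nabla_h K_h\nabla_h u_h)u_h\dx\big]$; and (iii) the mass term $-\lambda_h\parallel u-u_h\parallel_{0,\Om}^2$. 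Piece (iii) is immediately $O(h^4)$ by \eqref{CR:est}, hence negligible against the target $h^3|\ln h|^{1/2}$.

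For (i) I would use the difference-of-squares identity $\parallel a\parallel_{0,\Om}^2-\parallel b\parallel_{0,\Om}^2=(a-b,a+b)$ with $a=\nabla u-\nabla_h u_h$ and $b=K_h\nabla_h u_h-\nabla_h u_h$. Then $a-b=\nabla u-K_h\nabla_h u_h$, while $a+b=(\nabla u-\nabla_h u_h)+(K_h\nabla_h u_h-\nabla_h u_h)$ is $O(h)$ since both summands are (the first by \eqref{CR:est}, the second by Theorem \ref{supereig1} plus \eqref{CR:est}). Cauchy--Schwarz together with the recovery superconvergence $\parallel\nabla u-K_h\nabla_h u_h\parallel_{0,\Om}\lesssim h^2|\ln h|^{1/2}|u|_{\frac{7}{2},\Om}$ of Theorem \ref{supereig1} then gives $|\text{(i)}|\lesssim h^3|\ln h|^{1/2}|u|_{\frac{7}{2},\Om}|u|_{2,\Om}$.

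Piece (ii) is the crux, and I would handle it in two steps. First, replace the interpolation error by its Taylor model: by Lemma \ref{Lm:cr} the map $u\mapsto(I-\Pi^{\rm CR}_h)u-P^{\rm CR}_K(\nabla^2 u)$ reproduces zero on $P_2(K)$ (for quadratic $w$ the averages $\Pi_K^0\partial_{x_ix_j}w$ are exact), so the Bramble--Hilbert lemma yields $\parallel(I-\Pi^{\rm CR}_h)u-P^{\rm CR}_K(\nabla^2 u)\parallel_{0,K}\lesssim h_K^3|u|_{3,K}$, contributing $O(h^3)$ after pairing with $u_h$ and using $\parallel u_h\parallel_{0,\Om}=1$. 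Second, exploit the linearity of $P^{\rm CR}_K$ in its Hessian argument, so that $P^{\rm CR}_K(\nabla^2 u)-P^{\rm CR}_K(\nabla_h K_h\nabla_h u_h)=P^{\rm CR}_K\big(\nabla^2 u-\nabla_h K_h\nabla_h u_h\big)$, together with the scaling bound $\parallel P^{\rm CR}_K(G)\parallel_{0,K}\lesssim h_K^2\parallel G\parallel_{0,K}$. This scaling holds because the generating functions $(I-\Pi^{\rm ECR}_h)\phi^{\rm ECR}_1$, $(I-\Pi^{\rm ECR}_h)\phi^{\rm ECR}_2$, $\phi^{\rm ECR}_3$ have $L^2(K)$-norm of order $h_K^2|K|^{1/2}$ while each coefficient $\Pi_K^0(\cdot)$ is bounded by $|K|^{-1/2}\parallel G\parallel_{0,K}$. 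Taking $G=\nabla^2 u-\nabla_h K_h\nabla_h u_h$ and invoking Lemma \ref{highorderestimate}, namely $\parallel\nabla^2 u-\nabla_h K_h\nabla_h u_h\parallel_{0,\Om}\lesssim h|\ln h|^{1/2}|u|_{\frac{7}{2},\Om}$, I obtain $\sum_{K\in\cT_h}\int_K|P^{\rm CR}_K(G)\,u_h|\dx\lesssim h^2\cdot h|\ln h|^{1/2}|u|_{\frac{7}{2},\Om}$. Multiplying by the bounded factor $2\lambda_h$ gives $|\text{(ii)}|\lesssim h^3|\ln h|^{1/2}|u|_{\frac{7}{2},\Om}$.

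Collecting (i)--(iii) and absorbing the lower-order seminorms of the fixed eigenfunction into $|u|_{\frac{7}{2},\Om}^2$ yields the claimed estimate. The main obstacle is precisely piece (ii): it is not enough that $u-\Pi^{\rm CR}_h u$ is small, because the estimator approximates it only indirectly, through a recovered Hessian $\nabla_h K_h\nabla_h u_h$ that superconverges at the modest rate $h|\ln h|^{1/2}$. The recovery of a full order down to $h^3$ is possible only because $P^{\rm CR}_K$ contributes the extra $h_K^2$ scaling; verifying that scaling and the $P_2$-reproduction underpinning the Bramble--Hilbert step is where the genuine work lies.
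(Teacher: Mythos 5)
Your proposal is correct and follows essentially the same route as the paper's proof: the same identity \eqref{commutId}, the same splitting into the energy discrepancy, the two consistency sub-terms (Bramble--Hilbert against $P^{\rm CR}_K(\nabla^2 u)$ via Lemma \ref{Lm:cr}, then the recovered-Hessian substitution via Lemma \ref{highorderestimate}), and the $O(h^4)$ mass term, with the same inputs from Theorem \ref{supereig1} and \eqref{CR:est}. You merely make explicit two steps the paper cites tersely --- the difference-of-squares estimate for the energy term and the scaling $\parallel P^{\rm CR}_K(G)\parallel_{0,K}\lesssim h_K^2\parallel G\parallel_{0,K}$, where for the $\phi^{\rm ECR}_3$ contribution the factor $h_K^2$ actually comes from the coefficients $A_K$, $B_K$, $H_K\sim h_K^2$ rather than from $\parallel\phi^{\rm ECR}_3\parallel_{0,K}\sim|K|^{1/2}$ itself, a harmless misattribution that does not affect the bound.
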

\begin{proof}
The identity \eqref{commutId} reads
\begin{equation*}
\lambda -\lambda^{\rm (CR, E)}_h=a_h(u -u^{\rm (CR, E)}_h,u -u^{\rm (CR, E)}_h)-2\lambda^{\rm (CR, E)}_h(u -\Pi^{\rm CR}_h u ,u^{\rm (CR, E)}_h)-\lambda^{\rm (CR, E)}_h(u -u^{\rm (CR, E)}_h,u -u^{\rm (CR, E)}_h).
\end{equation*}
By the definition of $F^{\rm CR}_h$,
\begin{equation}\label{errorexpand}
\begin{split}
\lambda -\lambda^{\rm (CR, E)}_h-F^{\rm CR}_h=&a_h(u -u^{\rm (CR, E)}_h,u -u^{\rm (CR, E)}_h) - \parallel K_h\nabla_h u_h^{\rm (CR, E)} -\nabla_h u^{\rm (CR, E)}_h\parallel_{0,\Om}^2\\
& -2\lambda^{\rm (CR, E)}_h\sum_{K\in\cT_h} \big (u -\Pi^{\rm CR}_h u -P^{\rm CR}_K(\nabla^2 u ),u^{\rm (CR, E)}_h\big )_{0,K}\\
&-2\lambda^{\rm (CR, E)}_h\sum_{K\in\cT_h} \big (P^{\rm CR}_K(\nabla^2 u )-P^{\rm CR}_K(\nabla_h K_h\nabla_h u_h^{\rm (CR, E)} ),u^{\rm (CR, E)}_h\big )_{0,K}\\
&-\lambda^{\rm (CR, E)}_h(u -u^{\rm  (CR, E)}_h,u -u^{\rm (CR, E)}_h).
\end{split}
\end{equation}
Thanks to Theorem \ref{supereig1} and \eqref{CR:est},
\begin{equation}\label{1term}
\begin{split}
\big |a_h(u -u^{\rm (CR, E)}_h,u -u^{\rm (CR, E)}_h) - \parallel K_h\nabla_h u_h^{\rm (CR, E)} -\nabla_h u^{\rm (CR, E)}_h\parallel_{0,\Om}^2\big |\lesssim h^{3}|\ln h|^{1/2}|u |_{\frac{7}{2},\Om}^2.
\end{split}
\end{equation}
A combination of the Bramble-Hilbert lemma and Lemma \ref{Lm:cr} leads to
\begin{equation}\label{2term}
\big | \sum_{K\in\cT_h}\big (u -\Pi^{\rm CR}_h u -P^{\rm CR}_K(\nabla^2 u ),u^{\rm (CR, E)}_h\big )_{0,K}\big |\lesssim h^3|u |_{3,\Om}.
\end{equation}
According to Lemma \ref{highorderestimate},
\begin{equation}\label{3term}
\big |\sum_{K\in\cT_h}\big (P^{\rm CR}_K(\nabla^2 u )-P^{\rm CR}_K(\nabla_h K_h\nabla_h u_h^{\rm (CR, E)} ),u^{\rm (CR, E)}_h\big )_{0,K}\big |\lesssim  h^{3}|\ln h|^{1/2}|u |_{\frac{7}{2},\Om}^2 .
\end{equation}
It follows from \eqref{CR:est} that
\begin{equation}\label{4term}
(u -u^{\rm (CR, E)}_h,u -u^{\rm (CR, E)}_h)\lesssim h^4|u |_{2,\Om}^2.
\end{equation}
A substitution of \eqref{1term}, \eqref{2term}, \eqref{3term} and \eqref{4term} into \eqref{errorexpand} concludes
$$
\big |\lambda -\lambda^{\rm (CR, E)}_h-F^{\rm CR}_h\big |\lesssim h^{3}|\ln h|^{1/2}|u |_{\frac{7}{2},\Om}^2,
$$
which completes the proof.
\qed
\end{proof}
Notice that other a posteriori error estimators can be constructed following \eqref{FdefineCR}, but using other recovered gradients from $K_h\nabla_h u_h^{\rm (CR, E)}$. The resulted a posteriori error estimators  are also asymptotically exact as long as the recovered gradients superconverge to the gradients of eigenfunctions.

Similarly, the a posteriori error estimator $F^{\rm ECR}_h$ in \eqref{FdefineECR} is asymptotically exact, as presented in the following theorem.
\begin{Th}\label{Th:ecr}
Let $(\lambda , u )$ be the eigenpair of \eqref{variance} with $u \in H^{\frac{7}{2}}(\Om,\mathbb{R}) \cap  H^1_0(\Om,\mathbb{R})$, and $(\lambda^{\rm (ECR, E)}_h,u^{\rm (ECR, E)}_h)$ be the corresponding approximate eigenpair of \eqref{discrete} in $V^{\rm ECR}_h$. Then,
$$
\big |\lambda -\lambda^{\rm (ECR, E)}_h-F^{\rm ECR}_h\big |\lesssim h^{3}|\ln h|^{1/2}|u |_{\frac{7}{2},\Om}^2.
$$
\end{Th}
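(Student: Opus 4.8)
The plan is to follow the proof of Theorem \ref{Th:cr} almost verbatim, replacing the CR element throughout by the ECR element. First I would invoke the ECR analogue of the commuting identity \eqref{commutId}, namely
\begin{equation*}
\lambda -\lambda^{\rm (ECR, E)}_h=a_h(u -u^{\rm (ECR, E)}_h,u -u^{\rm (ECR, E)}_h)-2\lambda^{\rm (ECR, E)}_h(u -\Pi^{\rm ECR}_h u ,u^{\rm (ECR, E)}_h)-\lambda^{\rm (ECR, E)}_h(u -u^{\rm (ECR, E)}_h,u -u^{\rm (ECR, E)}_h),
\end{equation*}
which is available because of the commuting property \eqref{commuting} for $\Pi_h^{\rm ECR}$. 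Subtracting the estimator $F_h^{\rm ECR}$ in \eqref{FdefineECR} and inserting $P^{\rm ECR}_K(\nabla^2 u )$ as an intermediate quantity, I would split $\lambda -\lambda^{\rm (ECR, E)}_h-F^{\rm ECR}_h$ into four groups mirroring \eqref{errorexpand}: (i) the consistency gap $a_h(u -u^{\rm (ECR, E)}_h,u -u^{\rm (ECR, E)}_h)-\parallel K_h\nabla_h u_h^{\rm (ECR, E)} -\nabla_h u^{\rm (ECR, E)}_h\parallel_{0,\Om}^2$; (ii) the interpolation-expansion defect involving $u -\Pi^{\rm ECR}_h u -P^{\rm ECR}_K(\nabla^2 u )$; (iii) the recovered-Hessian defect involving $P^{\rm ECR}_K(\nabla^2 u )-P^{\rm ECR}_K(\nabla_h K_h\nabla_h u_h^{\rm (ECR, E)})$; and (iv) the lower-order remainder $\lambda^{\rm (ECR, E)}_h(u -u^{\rm (ECR, E)}_h,u -u^{\rm (ECR, E)}_h)$.

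Each group is then estimated by the ECR counterpart of the corresponding CR ingredient. For group (i), I would expand $\nabla u -\nabla_h u^{\rm (ECR, E)}_h=(\nabla u -K_h\nabla_h u^{\rm (ECR, E)}_h)+(K_h\nabla_h u^{\rm (ECR, E)}_h-\nabla_h u^{\rm (ECR, E)}_h)$ and apply the gradient-recovery superconvergence of Theorem \ref{supereig2} together with the energy and $L^2$ estimates in \eqref{ECR:est}; after a Cauchy--Schwarz step this yields an $O(h^{3}|\ln h|^{1/2}|u |_{\frac{7}{2},\Om}^2)$ bound. For group (ii), the exactness identity \eqref{identity:ecr} of Lemma \ref{Lm:cr} shows that $u -\Pi^{\rm ECR}_h u -P^{\rm ECR}_K(\nabla^2 u )$ annihilates quadratics, so the Bramble--Hilbert lemma gives $O(h^3|u |_{3,\Om})$. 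Group (iv) is controlled directly by the $L^2$ estimate in \eqref{ECR:est}, which is $O(h^4|u |_{2,\Om}^2)$. Thus the only genuinely new input is group (iii).

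The main obstacle is therefore group (iii), which requires the ECR analogue of Lemma \ref{highorderestimate}: the recovered Hessian $\nabla_h K_h\nabla_h u_h^{\rm (ECR, E)}$ must approximate $\nabla^2 u $ to order $h|\ln h|^{1/2}$ in $L^2$. I would prove this exactly as in Lemma \ref{highorderestimate}, inserting the piecewise quadratic Lagrange interpolant $\Pi_h^2 u $, bounding $\parallel \nabla^2 u -\nabla^2_h\Pi_h^2 u \parallel_{0,\Om}$ by standard interpolation theory, and controlling $\parallel \nabla^2_h\Pi_h^2 u -\nabla_h K_h\nabla_h u_h^{\rm (ECR, E)}\parallel_{0,\Om}$ through the inverse inequality and the first-order gradient-recovery superconvergence of Theorem \ref{supereig2}. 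Since $P^{\rm ECR}_K$ depends linearly and boundedly on its Hessian argument, so that $P^{\rm ECR}_K(\nabla^2 u )-P^{\rm ECR}_K(\nabla_h K_h\nabla_h u_h^{\rm (ECR, E)})=P^{\rm ECR}_K(\nabla^2 u -\nabla_h K_h\nabla_h u_h^{\rm (ECR, E)})$, this pointwise Hessian estimate upgrades, after the scaling $\parallel P^{\rm ECR}_K(G)\parallel_{0,K}\lesssim h_K^2\parallel G\parallel_{0,K}$, to the $O(h^{3}|\ln h|^{1/2}|u |_{\frac{7}{2},\Om}^2)$ bound for (iii). Collecting (i)--(iv) then finishes the proof.
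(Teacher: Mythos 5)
Your proposal is correct and is exactly the argument the paper intends: the paper proves Theorem \ref{Th:ecr} simply by remarking that it follows ``similarly'' to Theorem \ref{Th:cr}, and your four-group decomposition mirroring \eqref{errorexpand}, with Theorem \ref{supereig2} replacing Theorem \ref{supereig1}, the identity \eqref{identity:ecr} replacing \eqref{identity:cr}, and the ECR analogue of Lemma \ref{highorderestimate} proved by the same interpolation-plus-inverse-inequality argument, is precisely that adaptation. Your additional observations --- the linearity of $P_K^{\rm ECR}$ in its Hessian argument and the scaling $\parallel P^{\rm ECR}_K(G)\parallel_{0,K}\lesssim h_K^2\parallel G\parallel_{0,K}$ needed to convert the first-order recovered-Hessian estimate into the $O(h^3|\ln h|^{1/2})$ bound for group (iii) --- correctly fill in the details the paper leaves implicit.
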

\begin{remark}\label{remark:morley}
For fourth order elliptic source problems, let $u_h^{\rm (M,\ f)}$ be the finite element solution of $u^{\rm (f,\ S)}$ by the Morley element, it was analyzed in \cite{Hu2016Superconvergence} that the recovered hessian $K_h\nabla^2_h u_h^{\rm (M,\ f)}$ satisfies $ \parallel K_h\nabla^2_h u_h^{\rm (M,\ f)} -\nabla^2 u^{\rm (f,\ S)}\parallel_{0,\Om}\lesssim h^{3/2}|u^{\rm (f,\ S)}|_{4,\Om}$. Since the canonical interpolation operator of the Morley element also admits a commuting property, a similar procedure produces asymptotically exact a posteriori error estimators for eigenvalues by the Morley element.
\end{remark}
\section{Postprocessing algorithm}\label{sec:algo}
This section proposes two methods to improve accuracy of approximate eigenvalues by employing asymptotically exact a posteriori error estimators.

\begin{Th}\label{th:algo}
\item[\quad $\bullet$]Given an approximate eigenvalue $\lambda_h $ and an a posteriori error estimators $F_h$, which satisfies
\begin{equation*}\label{lam1base}
\lambda =\lambda_h +F_h +O(h^{\gamma}),
\end{equation*}
define a recovering eigenvalue approximation  by
\begin{equation*}\label{lam1}
\lambda_h^{\rm REA}:=\lambda_h +F_h.
\end{equation*}
It holds that
$$
|\lambda -\lambda_h^{\rm REA}|\lesssim h^{\gamma}.
$$
\item[\quad $\bullet$]Given two approximate eigenvalues $\lambda_h^1$ and $\lambda_h^2$, and the corresponding a posteriori error estimators $F_h^1$ and $F_h^2$, which satisfy
\begin{equation*}\label{lam2base}
\lambda =\lambda_h^1+F_h^1 +O(h^{\gamma}),\quad \lambda =\lambda_h^2+F_h^2 +O(h^{\gamma}),
\end{equation*}
define a combining eigenvalue approximation by
\begin{equation}\label{lam2}
\lambda_h^{\rm CEA}:=\frac{F_h^2}{F_h^2-F_h^1}\lambda_h^1-\frac{F_h^1}{F_h^2-F_h^1}\lambda_h^2.
\end{equation}
It holds that
$$
|\lambda -\lambda_h^{\rm CEA}|\lesssim h^{\gamma}.
$$
\end{Th}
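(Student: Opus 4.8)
The plan is to treat the two parts separately; the first is essentially immediate, while the second reduces to an algebraic identity followed by one delicate boundedness check.

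For the recovering approximation, I would simply combine the defining relation $\lambda = \lambda_h + F_h + O(h^\gamma)$ with the definition $\lambda_h^{\rm REA} = \lambda_h + F_h$ to obtain
$$
\lambda - \lambda_h^{\rm REA} = \lambda - \lambda_h - F_h = O(h^\gamma),
$$
whence $|\lambda - \lambda_h^{\rm REA}| \lesssim h^\gamma$. No further argument is required.

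For the combining approximation, I would first rewrite the two hypotheses as $\lambda_h^1 = \lambda - F_h^1 + O(h^\gamma)$ and $\lambda_h^2 = \lambda - F_h^2 + O(h^\gamma)$ and substitute these into the definition \eqref{lam2}. The structural point is that the coefficients $\frac{F_h^2}{F_h^2 - F_h^1}$ and $-\frac{F_h^1}{F_h^2 - F_h^1}$ sum to $1$, so $\lambda_h^{\rm CEA}$ is an affine combination of $\lambda_h^1$ and $\lambda_h^2$. After substitution, the $\lambda$ contributions recombine with total weight $1$ and reproduce $\lambda$ exactly, while the products $F_h^1 F_h^2$ coming from the $-F_h^i$ terms cancel identically because
$$
\frac{F_h^2(-F_h^1) - F_h^1(-F_h^2)}{F_h^2 - F_h^1} = 0.
$$
Thus the leading part of $\lambda_h^{\rm CEA}$ equals $\lambda$, and it remains only to estimate the remainder
$$
\lambda_h^{\rm CEA} - \lambda = \frac{F_h^2}{F_h^2 - F_h^1}O(h^\gamma) - \frac{F_h^1}{F_h^2 - F_h^1}O(h^\gamma).
$$

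The main obstacle, and the only step needing genuine care, is to show that the weights $\frac{F_h^2}{F_h^2 - F_h^1}$ and $\frac{F_h^1}{F_h^2 - F_h^1}$ stay bounded as $h \to 0$, since a priori the denominator $F_h^2 - F_h^1$ could degenerate and amplify the $O(h^\gamma)$ terms. Here I would invoke the intended setting of the algorithm, in which $\lambda_h^1$ is a lower bound and $\lambda_h^2$ an asymptotic upper bound of $\lambda$: by Theorems \ref{Th:cr} and \ref{Th:ecr} the estimators satisfy $F_h^1 = \lambda - \lambda_h^1 + O(h^\gamma) > 0$ and $F_h^2 = \lambda - \lambda_h^2 + O(h^\gamma) < 0$ for $h$ small, so $F_h^1$ and $F_h^2$ carry opposite signs. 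Consequently $|F_h^2 - F_h^1| = |F_h^1| + |F_h^2| \geq \max(|F_h^1|, |F_h^2|)$, which bounds both weights in absolute value by $1$. With the weights controlled, each remainder term is genuinely $O(h^\gamma)$, and the triangle inequality then gives $|\lambda - \lambda_h^{\rm CEA}| \lesssim h^\gamma$, completing the proof.
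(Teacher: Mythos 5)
The paper states Theorem \ref{th:algo} with no proof at all, so there is nothing to compare line by line; your argument for the recovering approximation is the trivial algebra the paper evidently has in mind, and your algebraic reduction for the combining approximation (weights summing to one, exact cancellation of the $F_h^1F_h^2$ products) is likewise the computation the paper leaves implicit.

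What makes your write-up genuinely valuable is the boundedness check, because you have identified a real gap in the statement itself: the hypotheses $\lambda=\lambda_h^i+F_h^i+O(h^{\gamma})$ alone do \emph{not} imply the conclusion. Nothing in them prevents $F_h^2-F_h^1$ from being much smaller than $\max(|F_h^1|,|F_h^2|)$, in which case the weights amplify the two independent $O(h^{\gamma})$ remainders. Concretely, with $\gamma=3$, taking $F_h^1=h^2$, $F_h^2=h^2+h^{5/2}$, $\lambda_h^1=\lambda-h^2+h^3$ and $\lambda_h^2=\lambda-h^2-h^{5/2}$ satisfies both hypotheses, yet $\lambda_h^{\rm CEA}-\lambda$ is of order $h^{5/2}$, not $h^{3}$. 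Your repair—invoking the sign condition $F_h^1>0>F_h^2$, which forces $|F_h^2-F_h^1|=|F_h^1|+|F_h^2|$ and hence weights bounded by one—is exactly the hypothesis under which the paper actually applies the theorem in \eqref{combineEig}, where $\lambda_h^{\rm (CR, E)}$ is a lower bound and the Rayleigh quotient $\lambda_h^{\rm (P_1^{\ast}, E)}$ of \eqref{projectionP1} is an asymptotic upper bound. One caveat you should make explicit: deducing the signs of $F_h^i$ from asymptotic exactness (Theorems \ref{Th:cr} and \ref{Th:ecr}) additionally requires that the true errors $\lambda-\lambda_h^i$ dominate the $O(h^{\gamma})$ remainders, i.e.\ a saturation assumption; this holds generically here since those errors are of order $h^2$ while $\gamma=3$, but it is an assumption beyond what the theorem states. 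In short, your proof is correct for the patched statement, and the patch (or an explicit hypothesis that the weights remain bounded) is needed for the theorem to be true as written.
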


The combining eigenvalue approximation $\lambda_h^{\rm CEA}$ in \eqref{lam2} is a weighted-average  of two approximate eigenvalues, and of high accuracy. Different from the construction in \cite{Hu2012A}, the weights here are computed by the corresponding a posteriori error estimators $F_h^1$ and $F_h^2$, instead of by solving the eigenvalue problem by two elements, which produce two upper bounds and two lower bounds of eigenvalues, respectively, on two successive meshes.

Next, we propose a new way to construct combining eigenvalue approximations with high accuracy by solving only one discrete eigenvalue problem. To this end, first solve the eigenvalue problem by the CR element which produces lower bounds of eigenvalues, and denote the resulted eigenpair by $(\lambda_h^{\rm (CR, E)}, u_h^{\rm (CR, E)})$. Then an application of the average-projection in \cite{Hu2015Constructing} to the approximate eigenfunction  $u_h^{\rm (CR, E)}$ results in a conforming function $\tilde{u}_h^{\rm (P_1^{\ast}, E)}$. Next, define
\begin{equation}\label{projectionP1}
u_h^{\rm (P_1^{\ast}, E)}:= \tilde{u}_h^{\rm (P_1^{\ast}, E)}/\parallel \tilde{u}_h^{\rm (P_1^{\ast}, E)}\parallel_{0,\Om}\ \text{ and }\ \lambda_h^{\rm  (P_1^{\ast}, E)}:=a_h(u_h^{\rm (P_1^{\ast}, E)},u_h^{\rm (P_1^{\ast}, E)}).
\end{equation}
According to \cite{Hu2015Constructing}, $u_h^{\rm (P_1^{\ast}, E)}$ is a conforming approximation of the eigenfunction $u$, and the Rayleigh quotient $\lambda_h^{\rm (P_1^{\ast}, E)}$ is an asymptotical upper bound of the eigenvalue $\lambda$. For the two approximate eigenpairs $(\lambda_h^{\rm (CR, E)}, u_h^{\rm (CR, E)})$ and $(\lambda_h^{\rm (P_1^{\ast}, E)}, u_h^{\rm (P_1^{\ast}, E)})$, following the procedure in Section \ref{sec:errorestimate}, we can construct the corresponding asymptotically exact a posteriori error estimators $F_h^{\rm (CR, E)}$ and $F_h^{\rm (P_1^{\ast}, E)}$, respectively. Finally, define a new approximation
\begin{equation}\label{combineEig}
\lambda_h^{\rm CEA}:=\frac{F_h^{\rm (CR, E)}}{F_h^{\rm (CR, E)}-F_h^{\rm (P_1^{\ast}, E)}}\lambda_h^{\rm (P_1^{\ast}, E)}-\frac{F_h^{\rm (P_1^{\ast}, E)}}{F_h^{\rm (CR, E)}-F_h^{\rm (P_1^{\ast}, E)}}\lambda_h^{\rm (CR, E)}.
\end{equation}
Note that the high accuracy of the resulted approximate eigenvalue $\lambda_h^{\rm CEA}$ in \eqref{combineEig} is guaranteed by Theorem \ref{th:algo}.

\section{Numerical examples}\label{sec:numerical}
This section presents five numerical tests. The first four examples compute eigenvalues of the Laplacian operator, and the last one deals with eigenvalues of the biharmonic operator.

\subsection{Example 1.}
In this example, the model problem \eqref{variance} on the unit square $\Om=(0,1)^2$ is considered. In this case, the exact eigenvalues are
$$\lambda =(m^2+n^2)\pi^2 ,\ m,\ n\ \text{are positive integers},$$
and the corresponding eigenfunctions are $u =2\sin (m\pi x_1)\sin (n\pi x_2)$. The domain is partitioned by uniform triangles. The level one triangulation $\cT_1$ consists of two right triangles, obtained by cutting the unit square with a north-east line. Each triangulation $\cT_i$ is refined into a half-sized triangulation uniformly, to get a higher level triangulation $\cT_{i+1}$.

Denote the approximate eigenpairs by the CR element, the ECR element, the conforming linear element on $\cT_h$ by $(\lambda_h^{\rm (CR, E)}, u_h^{\rm (CR, E)})$, $(\lambda_h^{\rm (ECR, E)}, u_h^{\rm (ECR, E)})$ and  $(\lambda_h^{\rm (P_1, E)}, u_h^{\rm (P_1, E)})$, respectively. The approximate eigenpair $(\lambda_h^{\rm (P_1^{\ast}, E)}, u_h^{\rm (P_1^{\ast}, E)})$ on $\cT_h$ is defined in \eqref{projectionP1}.



\subsubsection{Recovering eigenvalues}
Denote the recovering eigenvalue $\lambda_{\rm CR}^{\rm R,\ P_1^{\ast}}=\lambda_h^{\rm (CR, E)}+F_{\rm CR}^{\rm P_1^{\ast}}$ with the following asymptotically exact a posteriori error estimator
\begin{equation*}\label{FdefineCR}
F_{\rm CR}^{\rm P_1^{\ast}}:=\parallel \overline{K}_h\nabla_h u_h^{\rm (P_1^{\ast}, E)} -\nabla_h u^{\rm (CR, E)}_h\parallel_{0,\Om}^2-2\lambda^{\rm (CR, E)}_h \sum_{K\in\cT_h} \int_K P^{\rm CR}_K(\nabla_h \overline{K}_h\nabla_h u_h^{\rm (P_1^{\ast}, E)}) u^{\rm (CR, E)}_h\dx,
\end{equation*}
where the operator $\overline{K}_h$ refers to the PPR technique in \cite{Zhang2005A}. Let the recovering eigenvalue $\lambda_{\rm P_1^{\ast}}^{\rm R,\ CR}=\lambda_h^{\rm (P_1^{\ast}, E)}+F_{\rm P_1^{\ast}}^{\rm CR}$ with the following asymptotically exact a posteriori error estimator
$$
F_{\rm P_1^{\ast}}^{\rm CR}:=\parallel K_h\nabla_h u_h^{\rm (CR, E)} -\nabla_h u^{\rm (P_1^{\ast}, E)}_h\parallel_{0,\Om}^2.
$$
The other recovering eigenvalues and a posteriori error estimators are defined in a similar way.

Figure \ref{fig:squareCRECRP1} plots the errors of the first approximate eigenvalues by the CR element, the ECR element, the conforming linear element and their corresponding recovering eigenvalues.
\begin{figure}[!ht]
\setlength{\abovecaptionskip}{0pt}
\setlength{\belowcaptionskip}{0pt}
\centering
\includegraphics[width=11cm,height=9cm]{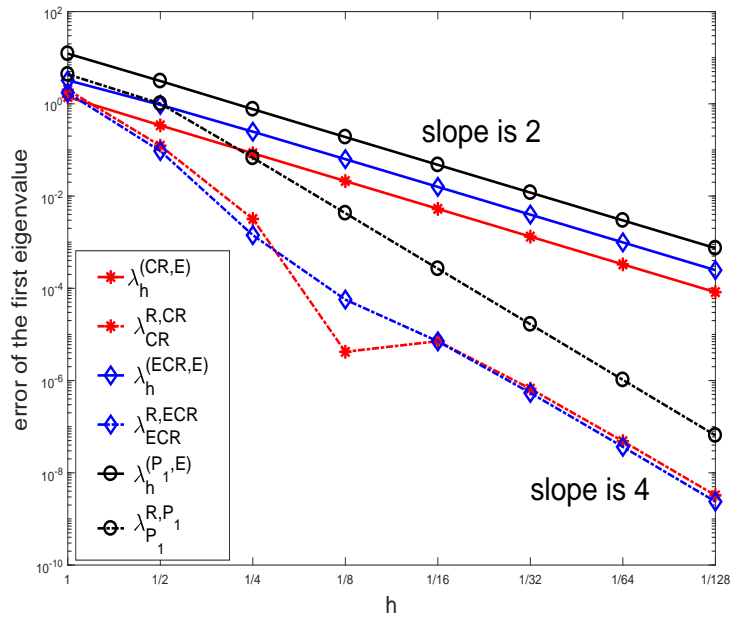}
\caption{\footnotesize{The errors of recovering eigenvalues for Example 1.}}
\label{fig:squareCRECRP1}
\end{figure}
It shows that the approximate eigenvalues $\lambda_h^{\rm (CR, E)}$, $\lambda_h^{\rm (ECR, E)}$ and $\lambda_h^{\rm (P_1, E)}$ converge at a rate 2, and the recovering eigenvalues $\lambda_{\rm CR}^{\rm R,\ CR}$, $\lambda_{\rm ECR}^{\rm R,\ ECR}$ and $\lambda_{\rm P_1}^{\rm R,\ P_1}$ converge at a higher rate 4. Note that although the theoretical convergence rates of the recovering eigenvalues are only 3, numerical tests indicate that the convergence rates are 4. The errors of the recovering eigenvalues $\lambda_{\rm CR}^{\rm R,\ CR}$, $\lambda_{\rm ECR}^{\rm R,\ ECR}$ and $\lambda_{\rm P_1}^{\rm R,\ P_1}$ on $\cT_8$  are $3.25\times 10^{-9}$, $2.38\times 10^{-9}$ and $6.47\times 10^{-8}$, respectively, they are significant improvements on the errors of the approximate eigenvalues $\lambda_h^{\rm (CR, E)}$, $\lambda_h^{\rm (ECR, E)}$ and $\lambda_h^{\rm (P_1, E)}$, which are $8.26\times 10^{-5}$, $ 2.48\times 10^{-4}$ and $7.43\times 10^{-4}$, respectively. This reveals that recovering eigenvalues are quite remarkable improvements on finite element solutions.

Table \ref{tab:variedpost} compares the errors of different recovering eigenvalues. It shows that on each mesh, the most accurate approximation is $\lambda_{\rm CR}^{\rm R,\ CR}$. Meanwhile, the errors of $\lambda_{\rm P_1}^{\rm R,\ CR}$ and $\lambda_{\rm P_1^{\ast}}^{\rm R,\ CR}$ are almost the same, and they are smaller than the other errors except that of $\lambda_{\rm CR}^{\rm R,\ CR}$. Note that only one discrete eigenvalue problem needs to be computed for the recovering eigenvalue $\lambda_{\rm P_1^{\ast}}^{\rm R,\ CR}$, but two for the recovering eigenvalue $\lambda_{\rm P_1}^{\rm R,\ CR}$.
\begin{table}[!ht]
  \centering
    \begin{tabular}{c|cccccc}
    \hline
        h &$\lambda_{\rm CR}^{\rm R,\ CR}$&$\lambda^{\rm R,\ P_1^{\ast}}_{\rm CR}$&$ \lambda_{\rm P_1}^{\rm R,\ CR}$&$\lambda_{\rm P_1}^{\rm R,\ P_1}$&$\lambda_{\rm P_1^{\ast}}^{\rm R,\ CR}$&$ \lambda_{\rm P_1^{\ast}}^{\rm R,\ P_1^{\ast}}$\\\hline
       $1/2$& 2.0124  & -1.1152 & 76.2332  & 4.3620  & 9.5561  & 4.3620 \\\hline
       $1/4$& 0.1236  & -0.7206& 0.4255  & 1.0103  & 0.4192  & 1.1415  \\\hline
       $1/8$& 3.19E-03 & -7.12E-02 & 2.33E-02 & 6.77E-02 & 2.06E-02 & 9.20E-02\\\hline
       $1/16$& -4.20E-06 & -5.63E-03 & 1.31E-03 & 4.26E-03 & 1.10E-03 & 7.32E-03 \\\hline
       $1/32$& -7.15E-06 & -4.55E-04 & 7.67E-05 & 2.66E-04 & 6.29E-05 & 6.39E-04\\\hline
       $1/64$& -6.65E-07 & -4.06E-05 & 4.63E-06 & 1.66E-05 & 3.75E-06 & 6.24E-05 \\\hline
       $1/128$& -4.84E-08 & -4.04E-06& 2.84E-07 & 1.04E-06 & 2.29E-07 & 6.69E-06 \\\hline
       $1/256$& -3.25E-09 & -4.41E-07& 1.76E-08 & 6.47E-08 & 1.41E-08 & 7.68E-07  \\\hline
    \end{tabular}%
  \caption{\footnotesize  The errors of different recovering eigenvalues for Example 1.}
  \label{tab:variedpost}%
\end{table}%

\subsubsection{Combining eigenvalues}
A combining eigenvalue approximation involves two different approximate eigenvalues, and also two asymptotically exact a posteriori error estimators. In this part, the weighted-average of a lower bound and an upper bound of the eigenvalue is considered. The lower bound is chosen to be $\lambda_h^{\rm (CR, E)}$, and the upper bound is $\lambda_h^{\rm (P_1, E)}$ or $\lambda_h^{\rm (P_1^{\ast}, E)}$. The combining eigenvalue $\lambda^{\rm C,\ P_1^{\ast}}_{\rm P_1,CR}$ is a weighted-average of the eigenvalues $\lambda_h^{\rm (CR, E)}$ and $\lambda_h^{\rm (P_1^{\ast}, E)}$, and the asymptotically exact a posteriori error estimator for the former approximate eigenvalue is $F_{\rm CR}^{\rm P_1}$, the one for the latter approximate eigenvalue is $F_{\rm P_1^{\ast}}^{\rm CR}$, namely,
$$
\lambda^{\rm C,\ P_1^{\ast}}_{\rm P_1,CR}=\frac{F_{\rm CR}^{\rm P_1}}{F_{\rm CR}^{\rm P_1}-F_{\rm P_1^{\ast}}^{\rm CR}}\lambda_h^{\rm (P_1^{\ast}, E)}-\frac{F_{\rm P_1^{\ast}}^{\rm CR}}{F_{\rm CR}^{\rm P_1}-F_{\rm P_1^{\ast}}^{\rm CR}}\lambda_h^{\rm (CR, E)}.
$$
The other combining eigenvalues are defined in a similar way.

The errors of some combining eigenvalues on $\cT_8$ are recorded in Table \ref{tab:combination}. Among all the errors in Table \ref{tab:combination}, the smallest one is $1.17\times 10^{-9}$, and it is the error of a weighted-average of $\lambda_h^{\rm (CR, E)}$ and $\lambda_h^{\rm (P_1, E)}$, where the weights are computed by $F_{\text{CR}}^{\text{CR}}$ and $F_{\text{P}_1}^{\text{CR}}$. The combining eigenvalue proposed in Section \ref{sec:algo} is a weighted-average of $\lambda_h^{\rm (CR, E)}$ and $\lambda_h^{\rm (P_1^{\ast}, E)}$, the weights are computed by $F_{\text{CR}}^{\text{CR}}$ and $F_{\text{P}_1^{\ast}}^{\text{CR}}$. The error of this combining eigenvalue on $\cT_8$ is $1.51\times 10^{-9}$, only slightly larger than the smallest error in Table \ref{tab:combination}.
\begin{table}[!ht]
  \centering
    \begin{tabular}{c|cccc}
    \hline
    &$\lambda^{\rm C,\ P_1}_{\rm CR, CR}$ & $\lambda^{\rm C,\ P_1}_{\rm CR, P_1}$ & $\lambda^{\rm C,\ P_1^{\ast}}_{\rm CR, CR}$ &$\lambda^{\rm C,\ P_1^{\ast}}_{\rm CR, P_1^{\ast}}$ \\\hline
    error &-1.17E-09 & 3.55E-09 & -1.51E-09 & 7.38E-08\\\hline
    &$\lambda^{\rm C,\ P_1}_{\rm P_1, CR}$ & $\lambda^{\rm C,\ P_1}_{\rm P_1, P_1}$ & $\lambda^{\rm C,\ P_1^{\ast}}_{\rm P_1, CR}$ &$\lambda^{\rm C,\ P_1^{\ast}}_{\rm P_1, P_1^{\ast}}$\\\hline
    error &-8.36E-08 & -7.89E-08 & -8.39E-08 & -8.60E-09\\\hline
    \end{tabular}
    \caption{\footnotesize The errors of different combining eigenvalues on the mesh $\cT_8$ for Example 1.}
    \label{tab:combination}%
\end{table}

\subsubsection{Extrapolation eigenvalues}
\begin{figure}[!ht]
\setlength{\abovecaptionskip}{0pt}
\setlength{\belowcaptionskip}{0pt}
\centering
\includegraphics[width=11cm,height=9cm]{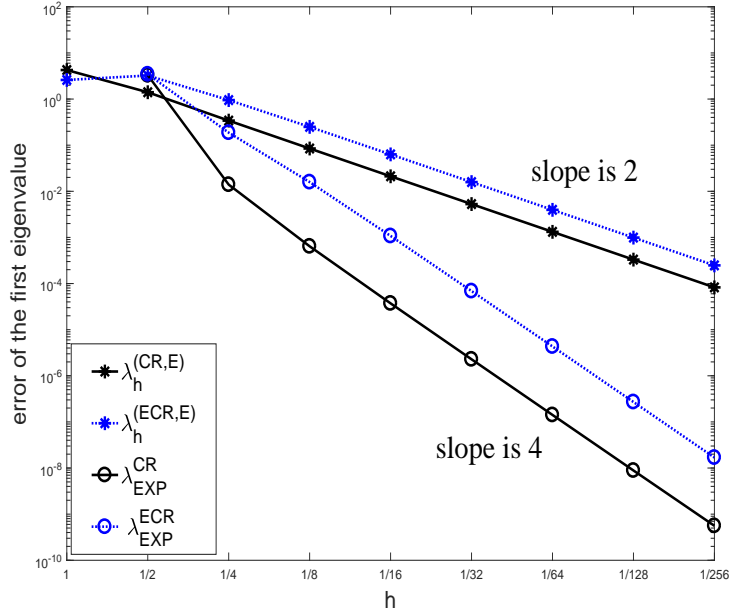}
\caption{\footnotesize{The errors of extrapolation eigenvalues on an uniform triangulation for Example 1.}}
\label{fig:squareExtrapolation}
\end{figure}

Figure \ref{fig:squareExtrapolation} plots the errors of the first approximate eigenvalues by the CR element, the ECR element and their corresponding extrapolation eigenvalues on the aforementioned uniform triangulations. As showed in Figure \ref{fig:squareExtrapolation}, the convergence rate 3 of the extrapolation eigenvalues $\lambda_{\rm EXP}^{\rm CR}$ and $\lambda_{\rm EXP}^{\rm ECR}$ in Theorem \ref{Th:extraCR} and Theorem \ref{Th:extraECR} is verified. However, the numerical results indicate a higher convergence rate 4. Table \ref{tab:compareextrapolation} compares the performance of recovering eigenvalues and extrapolation eigenvalues. It shows that the recovering eigenvalue $\lambda_{\rm CR}^{\rm R,\ CR} $ behaves better than the extrapolation eigenvalue $\lambda_{\rm EXP}^{\rm P_1}$, but worse than $ \lambda_{\rm EXP}^{\rm CR}$.

\renewcommand\arraystretch{1.8}
\begin{table}[!ht]
  \centering
    \begin{tabular}{c|cccccc}
    \hline
    h & $\lambda_h^{\rm (CR, E)}$ &$ \lambda_h^{\rm (P_1, E)}$ & $\lambda^{\rm CR}_{\rm EXP}$ & $\lambda^{\rm P_1}_{\rm EXP}$ & $\lambda_{\rm CR}^{\rm R,\ CR}$ & $\lambda_{\rm P_1}^{\rm R,\ P_1}$ \\\hline
       1/4   & -0.3407  & 3.1266  & 0.0140  & 0.0818  & 0.1236  & 1.0103  \\\hline
       1/8   & -8.47E-02 & 7.66E-01 & 6.42E-04 & -2.04E-02 & 3.19E-03 & 6.77E-02 \\\hline
       1/16  & -2.11E-02 & 1.91E-01 & 3.72E-05 & -1.34E-03 & -4.20E-06 & 4.26E-03 \\\hline
       1/32  & -5.29E-03 & 4.76E-02 & 2.28E-06 & -8.24E-05 & -7.15E-06 & 2.66E-04 \\\hline
       1/64  & -1.32E-03 & 1.19E-02 & 1.42E-07 & -5.12E-06 & -6.65E-07 & 1.66E-05 \\\hline
       1/128 & -3.30E-04 & 2.97E-03 & 8.85E-09 & -3.19E-07 & -4.84E-08 & 1.04E-06 \\\hline
       1/256 & -8.26E-05 & 7.43E-04 & 5.59E-10 & -1.99E-08 & -3.25E-09 & 6.47E-08 \\\hline
    \end{tabular}
  \caption{\footnotesize{The errors of recovering eigenvalues and extrapolation eigenvalues, where $\lambda^{\rm P_1}_{\rm EXP}=(4\lambda^{\rm (P_1, E)}_h - \lambda^{\rm (P_1, E)}_{2h})/3$.}}%
  \label{tab:compareextrapolation}%
\end{table}%

\begin{figure}[!ht]
\begin{center}
\begin{tikzpicture}[xscale=5,yscale=5]
\draw[-] (0,0) -- (0,1);
\draw[-] (0,0) -- (1,0);
\draw[-] (1,0) -- (1,1);
\draw[-] (0,1) -- (1,1);
\draw[-] (0,0.9) -- (0.9,1);
\draw[-] (0,0.9) -- (0.05,0);
\draw[-] (0.05,0) -- (0.9,1);
\draw[-] (1,0) -- (0.9,1);
\node[below, left] at (0,0) {(0,0)};
\node[below, right] at (1,0) {(1,0)};
\node[above, left] at (0,1) {(0,1)};
\node[above, right] at (1,1) {(1,1)};
\node[left] at (0,0.9) {(0,0.9)};
\node[below] at (0.05,0) {(0.05,0)};
\node[above] at (0.9,1) {(0.9,1)};
\end{tikzpicture}
\caption{\footnotesize A level one triangulation $\cT_1$ of $\Om$.}
\label{fig:nonuniformmesh}
\end{center}
\end{figure}
The eigenvalue problem is also solved on other triangulations. The level one triangulation $\cT_1$ is showed in Figure \ref{fig:nonuniformmesh}. Each triangulation $\cT_i$ is refined into a half-sized triangulation uniformly to get a higher level triangulation $\cT_{i+1}$. The errors of the approximate eigenvalues $\lambda_h^{\rm (CR, E)}$, $\lambda_h^{\rm (ECR, E)}$, $\lambda_{\rm EXP}^{\rm CR}$ and $\lambda_{\rm EXP}^{\rm ECR}$ are recorded in Table \ref{tab:NonUniExtra}. It shows that on such triangulations, which are not uniform any more, the convergence rates of the extrapolation eigenvalues are still over 3.
\renewcommand\arraystretch{1.5}
\begin{table}[!ht]
\small
  \centering
    \begin{tabular}{c|ccccccc}
    \hline
          & $\cT_2$& $\cT_3$& $\cT_4$ & $\cT_5$& $\cT_6$&$\cT_7$&  $\cT_8$\\\hline
    $|\lambda -\lambda_h^{\rm (CR, E)}|$    & 0.928068 & 2.22E-01 & 5.55E-02 & 1.39E-02 & 3.48E-03 & 8.69E-04 & 2.17E-04 \\
    $|\lambda -\lambda_{\rm EXP}^{\rm CR}|$& 2.870925 & 1.39E-02 & 7.97E-05 & 3.45E-05 & 3.55E-06 & 3.04E-07 & 2.39E-08 \\
    rate  &       & 7.69  & 7.45  & 1.21  & 3.28  & 3.55  & 3.67  \\\hline
    $|\lambda -\lambda_h^{\rm (ECR, E)}|$   & 2.683924 & 7.68E-01 & 2.01E-01 & 5.07E-02 & 1.27E-02 & 3.18E-03 & 7.96E-04 \\
    $|\lambda -\lambda_{\rm EXP}^{\rm ECR}|$      & 2.825196 & 1.30E-01 & 1.12E-02 & 7.78E-04 & 5.08E-05 & 3.27E-06 & 2.09E-07 \\
    rate  &       & 4.44  & 3.53  & 3.85  & 3.94  & 3.96  & 3.96  \\\hline
    \end{tabular}%
  \caption{\footnotesize The errors and convergence rates of extrapolation eigenvalues on nonuniform triangulations for Example 1.}
  \label{tab:NonUniExtra}%
\end{table}%
\subsection{Example 2}
Next we consider the following eigenvalue problem:
\begin{equation}
\begin{split}
-\Delta u \ &=\ \lambda  u \  \text{\quad in}\ \Om=(0,1)^2 ,\\
u |_{x_1=0} &=u |_{x_2=0}=u |_{x_2=1}=\partial_{x_1} u |_{x_1=1}=0 ,
\end{split}
\end{equation}
In this case, there exists an eigenpair $(\lambda , u )$ where
$$
\lambda =\frac{5\pi^2}{4},\quad u =2\cos \frac{\pi (x_1-1)}{2}\sin \pi x_2.
$$
We solve this problem on the same sequence of uniform triangulations employed in Example 1. Figure \ref{fig:squareNeumannCRECRP1} shows that the approximate eigenvalues by the CR element, the ECR element and the conforming linear element converge at the same rate 2, the recovering eigenvalues  $\lambda_{\rm CR}^{\rm R,\ CR}$ and $\lambda_{\rm P_1}^{\rm R,\ P_1}$ converge  at rate 4. Especially, the recovering eigenvalue $\lambda_{\rm ECR}^{\rm R,\ ECR}$ converges at a strikingly higher rate 6.
\begin{figure}[!ht]
\setlength{\abovecaptionskip}{0pt}
\setlength{\belowcaptionskip}{0pt}
\centering
\includegraphics[width=11cm,height=9cm]{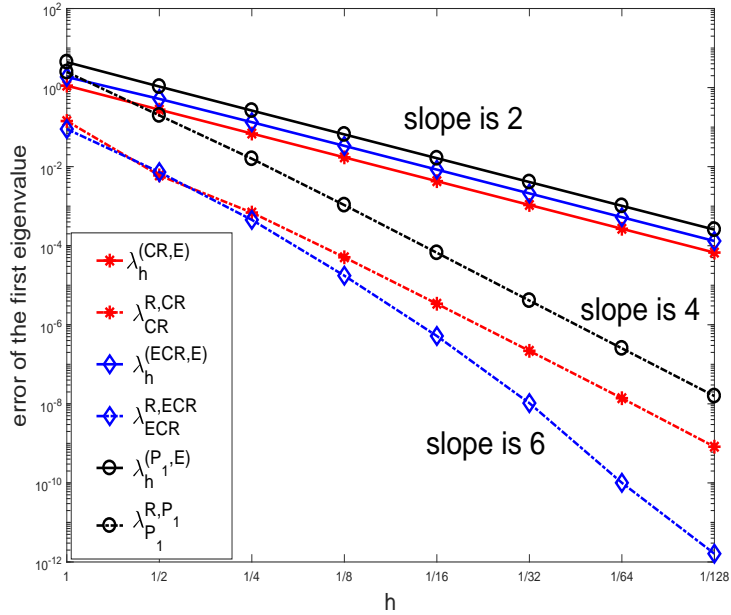}
\caption{\footnotesize{The errors of recovering eigenvalues for Example 2.}}
\label{fig:squareNeumannCRECRP1}
\end{figure}
\subsection{Example 3}
In this experiment, we consider the eigenvalue problem \eqref{variance} on the domain which is an equilateral triangle:
$$
\Om=\big \{(x_1,x_2)\in \mathbb{R}^2:0< x_2< \sqrt{3}x_1, \sqrt{3}(1-x_1)<x_2\big \}.
$$
The boundary consists of three parts:
$
\Gamma_1=\big\{(x_1,x_2)\in \mathbb{R}^2: x_2=\sqrt{3}x_1,\ 0.5\le x_1\le 1 \big\},
$
$
\Gamma_2=\big\{(x_1,x_2)\in \mathbb{R}^2: x_2=\sqrt{3}(1-x_1),\ 0.5\le x_1\le 1 \big\},
$
$
\Gamma_3=\big\{(x_1,x_2)\in \mathbb{R}^2: x_1=1 ,\ 0\le x_2\le 1 \big\}.
$
Under the boundary condition
\begin{equation*}
\left\{
\begin{aligned}
u |_{\Gamma_1\cup \Gamma_2}&=0 \\
\partial_{x_1} u |_{\Gamma_3}&=0 \\
\end{aligned}\ ,
\right.
\end{equation*}
there exists an eigenpair $(\lambda  , u )$, where $\lambda =\frac{16\pi^2}{3}$ and
$$
u =\frac{2\sqrt[4]{12}}{3}\big ( \sin \frac{4\pi x_2}{\sqrt{3}} + \sin 2\pi(x_1-\frac{x_2}{\sqrt{3}}) + \sin 2\pi(1-x_1-\frac{x_2}{\sqrt{3}})\big ).
$$
\renewcommand\arraystretch{1.5}
\begin{table}[!ht]
\small
  \centering
    \begin{tabular}{c|ccccccc}
    \hline
          & $\cT_1$ &$\cT_2$ &$\cT_3$ & $\cT_4$ & $\cT_5$ &$\cT_6$ &$\cT_7$ \\\hline
    $\lambda -\lambda_h^{\rm (CR, E)}$& -3.3545 & 3.6697 & 9.61E-01 & 2.43E-01 & 6.10E-02 & 1.53E-02 & 3.82E-03 \\
    $\lambda -\lambda_{\rm CR}^{\rm R,\ CR}$& -23.0992 & -8.74E-03 & 5.69E-03 & 7.02E-04 & 5.58E-05 & 3.86E-06 & 2.53E-07 \\
    rate   &  -    & 11.37 & 0.62  & 3.02  & 3.65  & 3.85  & 3.93 \\\hline
    $\lambda -\lambda_h^{\rm (ECR, E)}$& 1.177 & 4.8974 & 1.3186 & 3.36E-01 & 8.44E-02 & 2.11E-02 & 5.28E-03 \\
    $\lambda -\lambda_{\rm ECR}^{\rm R,\ ECR}$& -19.2136 & 0.3463 & 2.85E-02 & 2.27E-03 & 1.55E-04 & 1.01E-05 & 6.40E-07 \\
    rate  &  -    & 5.79  & 3.6   & 3.65  & 3.87  & 3.94  & 3.97 \\\hline\
    $\lambda -\lambda_h^{\rm (P_1, E)}$& 20.1024 & -11.1936 & -2.879 & -7.29E-01 & -1.83E-01 & -4.58E-02& -1.14E-02\\
    $\lambda -\lambda_{\rm P_1}^{\rm R,\ P_1}$& 26.817 & 1.0209 & 0.1005 & 3.49E-03 & 9.62E-05 & 2.21E-06 & 1.93E-08 \\
    rate  & -     &  4.72 & 3.35  & 4.85  & 5.18  & 5.45  & 6.84 \\\hline
    \end{tabular}%
  \caption{\footnotesize The errors and convergence rates of recovering eigenvalues for Example 3.}
    \label{tab:triangleCRECRP1}%
\end{table}%
The level one triangulation $\cT_1$ is obtained by refining the domain $\Om$ into four half-sized triangles. Each triangulation $\cT_i$ is refined into a half-sized triangulation uniformly, to get a higher level triangulation $\cT_{i+1}$. It is showed in Table \ref{tab:triangleCRECRP1} that the convergence rates of the recovering eigenvalues $\lambda_{\rm CR}^{\rm R,\ CR}$ and $\lambda_{\rm ECR}^{\rm R,\ ECR}$ are 4.


\subsection{Example 4}
Next we consider the following eigenvalue problem
\begin{equation}
\begin{split}
-\Delta u \ &=\ \lambda  u \  \text{\quad in}\ \Om ,\\
u  &=0 \  \text{ \quad in }\ \partial \Om,
\end{split}
\end{equation}
on a L-shaped domain $\Om=(-1,1)^2/[0,1]\times[-1,0]$. For this problem, the third and the eighth eigenvalues are known to be $2\pi^2$ and $4\pi^2$, respectively, and the corresponding eigenfunctions are smooth.

In the computation, the level one triangulation is obtained by dividing the domain into three unit squares, each of which is further divided into two triangles. Each triangulation is refined into a half-sized triangulation uniformly to get a higher level triangulation. Since exact eigenvalues of this problem are unknown, we solve the first eight eigenvalues by the conforming $\rm P_3$ element on the mesh $\cT_9$, and take them as reference eigenvalues.

\begin{center}
\setlength{\tabcolsep}{1.2mm}{
\begin{table}[!ht]
\small
\renewcommand\arraystretch{1.8}
    \begin{tabular}{c|cccccccc}
    \hline
          &$\lambda_1 $ & $\lambda_2 $ & $\lambda_3 $ & $\lambda_4 $ & $\lambda_5 $ & $\lambda_6 $ & $\lambda_7 $ & $\lambda_8 $  \\\hline
    $\lambda_h^{(\rm CR, E)}$ & 3.80E-04 & 2.43E-05 & 1.67E-05 & 4.49E-05 & 3.20E-04 & 2.53E-04 & 1.11E-04 & 8.70E-05 \\\hline
    $\lambda_h^{(\rm P_1, E)}$ & -3.98E-04 & -1.13E-04 & -1.51E-04 & -2.21E-04 & -4.33E-04 & -3.96E-04 & -3.02E-04 & -3.07E-04 \\\hline
    $\lambda_h^{(\rm P_1^{\ast}, E)}$ & -5.20E-04 & -1.13E-04 & -1.51E-04 & -2.21E-04 & -5.24E-04 & -4.48E-04 & -3.03E-04 & -3.39E-04 \\\hline
    $\lambda_{\rm CR}^{\rm R,\ CR}$ & 1.73E-04 & 1.43E-07 & -2.57E-09 & 1.95E-08 & 1.28E-04 & 7.41E-05 & 2.47E-07 & -5.95E-09 \\\hline
    $\lambda_{\rm P_1}^{\rm R,\ P_1}$ & 1.95E-04 & 1.08E-06 & 5.00E-08 & 3.38E-07 & 1.44E-04 & 8.36E-05 & 2.17E-06 & 2.57E-07 \\\hline
    $\lambda_{\rm P_1^{\ast}}^{\rm R,\ CR}$ & 1.75E-04 & 1.02E-07 & 1.15E-08 & 5.16E-08 & 1.30E-04 & 7.51E-05 & 2.51E-07 & 9.30E-08\\\hline
    $\lambda^{\rm C,\ P_1}_{\rm CR, P_1}$ & 1.79E-04 & 1.02E-07 & 2.69E-09 & 7.32E-08 & 1.32E-04 & 7.67E-05 & 7.61E-07 & 5.21E-08\\\hline
    $\lambda^{\rm C,\ P_1^{\ast}}_{\rm CR, P_1^{\ast}}$ & 1.74E-04 & 3.08E-07 & -1.16E-09 & 2.49E-08 & 1.28E-04 & 7.43E-05 & 2.48E-07 & 1.35E-08 \\\hline
    $\lambda^{\rm HHS}_h$ & -2.51E-05 & -1.83E-07 &  -1.89E-10 &  -3.53E-08  & -3.99E-05 & -2.69E-05 & -2.54E-07 & 6.16E-09 \\\hline
    \end{tabular}%
  \caption{\footnotesize Relative errors of different approximations to the first eight eigenvalues on $\cT_7$ for Example 4.}
  \label{tab:Lshape}%
\end{table}}
\end{center}
An application of the post-processing technique in \cite{Hu2012A} to the discrete eigenvalues  by the CR element and the conforming linear element on $\cT_6$ and $\cT_7$ results in a new approximate eigenvalue, denoted by $\lambda^{\rm HHS}_h$. Table \ref{tab:Lshape} compares the relative errors of the first eight approximate eigenvalues on $\cT_7$ by different methods. It implies that the errors of the recovering eigenvalues $\lambda_{\rm CR}^{\rm R,\ CR}$ are slightly smaller than those of $\lambda_{\rm P_1}^{\rm R,\ P_1}$. Meanwhile, the errors of the combining eigenvalues $\lambda^{\rm C,\ P_1}_{\rm CR, P_1}$ and $\lambda^{\rm C,\ P_1^{\ast}}_{\rm CR, P_1^{\ast}}$ are similar to those of the recovering eigenvalues $\lambda_{\rm CR}^{\rm R,\ CR}$, and are slightly larger than those of $\lambda^{\rm HHS}_h$.

It is observed in Table \ref{tab:Lshape} that for different eigenvalues, the relative errors of the approximate eigenvalues $\lambda_h^{\rm (CR, E)}$  do not vary much. This phenomenon still holds for the approximate eigenvalues $\lambda_h^{\rm (P_1, E)}$ and $\lambda_h^{\rm (P_1^{\ast}, E)}$. However, for the approximate eigenvalues $\lambda_{\rm P_1}^{\rm R,\ P_1}$, $\lambda_{\rm P_1^{\ast}}^{\rm R,\ CR}$, $\lambda^{\rm C,\ P_1}_{\rm CR, P_1}$, $\lambda^{\rm C,\ P_1^{\ast}}_{\rm CR, P_1^{\ast}}$ and $\lambda_h^{\rm HHS}$, the relative errors of various eigenvalues are quite different. The reason is that the accuracy of a posteriori error estimators relies on the regularity of corresponding eigenfunctions. Thus, these approximate eigenvalues achieve better accuracy if corresponding eigenfunctions are smooth. Note that the approximate eigenvalues $\lambda_h^{\rm HHS}$ permit higher accuracy than recovering eigenvalues.

\subsection{Example 5}
In this experiment, we consider the following fourth order elliptic eigenvalue problem
\begin{equation}
\begin{split}
\Delta^2 u \ &=\ \lambda  u \  \text{\quad in}\ \Om=(0,1)^2 ,\\
u |_{\partial \Om} &=0 \  \text{,\quad}\ \Delta u |_{\partial \Om}=0.
\end{split}
\end{equation}

\begin{figure}[!ht]
\setlength{\abovecaptionskip}{0pt}
\setlength{\belowcaptionskip}{0pt}
\centering
\includegraphics[width=11cm,height=9cm]{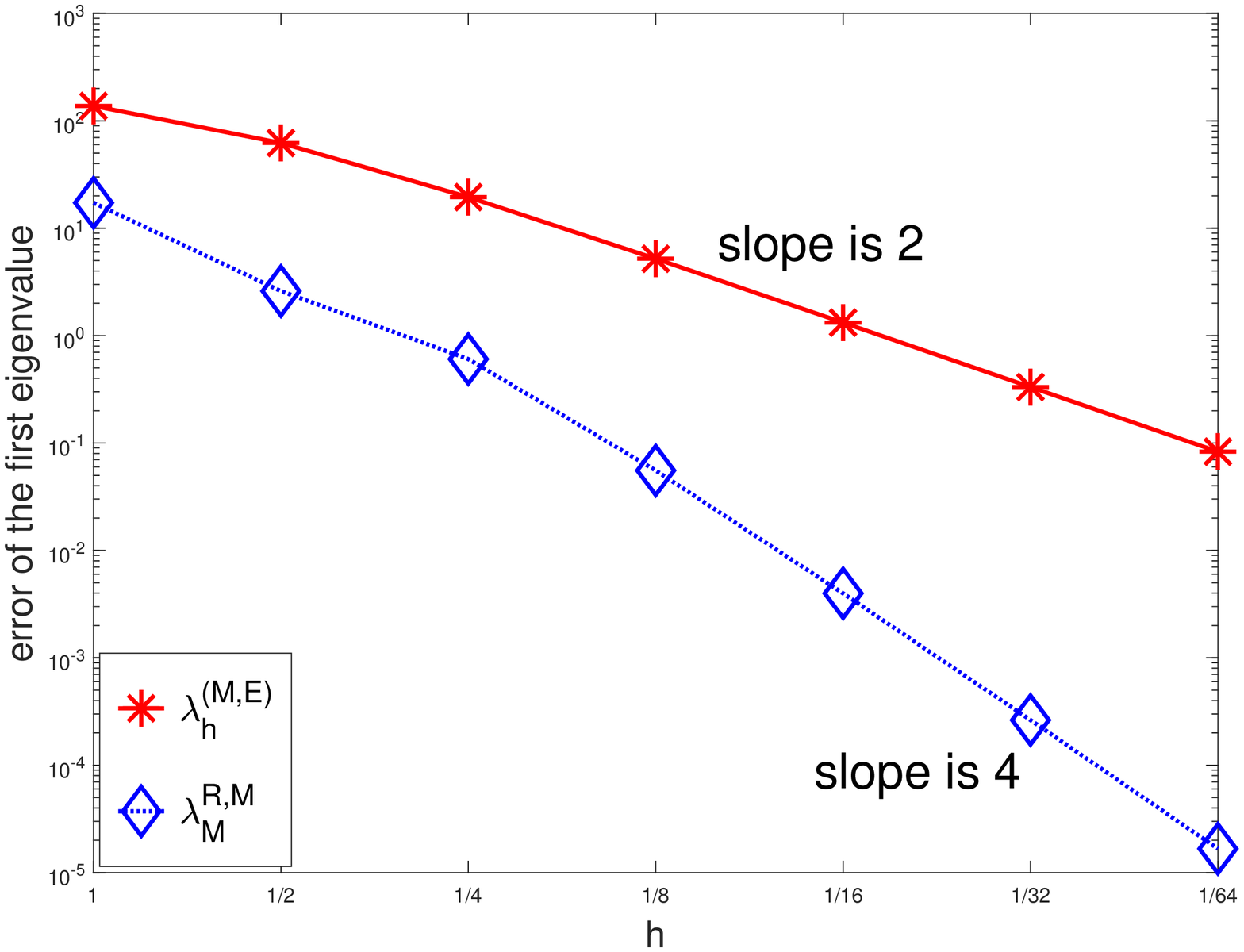}
\caption{\footnotesize{The errors of recovering eigenvalues $\lambda_{\rm M}^{\rm R,\ M}$ and approximate eigenvalues $\lambda_h^{\rm (M, E)}$ by the Morley element for Example 5.}}
\label{fig:squareMorley}
\end{figure}

The problem is solved by the Morley element on the same sequence of uniform triangulations in Example 1. It is known that the first eigenvalue of this problem is $\lambda =4\pi^4$, and the convergence rates of approximate eigenvalues by the Morley element are 2. Figure \ref{fig:squareMorley} reveals that the recovering eigenvalues $\lambda_{\rm M}^{\rm R,\ M}$ converge at a higher rate 4, which is in accordance with Remark \ref{remark:morley}.

\bibliographystyle{plain}
\bibliography{bibifile}

\end{document}